\newcommand{\R}{{\mathbb R}}
\newcommand{\E}{{\mathbb E}}
\renewcommand{\P}{{\mathbb P}}
\newcommand{\N}{{\mathbb N}}
\newcommand{\eps}{\varepsilon}
\DeclareMathOperator{\Var}{Var}
\DeclareMathOperator{\Cov}{Cov}
\renewcommand{\det}[1]{{\operatorname{\text{det}}}{\left(#1\right)}}
\newcommand{\trace}[1]{{\operatorname{\text{trace}}}{\left(#1\right)}}
\DeclareMathOperator{\rank}{rank}
\DeclareMathOperator{\argmin}{argmin}
\DeclareMathOperator{\s}{span}
\newcommand{\lmax}[1]{{\operatorname{\lambda_{\text{max}}}}{\left(#1\right)}}
\newcommand{\lmin}[1]{{\operatorname{\lambda_{\text{min}}}}{(#1)}}
\newtheorem{theorem}{Theorem}[section]
\newtheorem{lemma}[theorem]{Lemma}
\newtheorem{corollary}[theorem]{Corollary}
\newtheorem{remark}[theorem]{Remark}
\newtheorem*{remark*}{Remark}
\newtheorem{condition*}{Condition}
\numberwithin{equation}{section}
\newcounter{rcnt}[section]
\newcommand{\rem}[1]{}
\begin{document}

\begin{frontmatter}

\title{The relative effects of dimensionality and multiplicity of hypotheses on the $F$-test in linear regression
}
\runtitle{Dimensionality and multiplicity of hypotheses in the $F$-test}


\author{\fnms{Lukas} \snm{Steinberger}\ead[label=e1]{lukas.steinberger@univie.ac.at} }
\address{Department of Statistics and Operations Research\\
	University of Vienna  \\
	Oskar-Morgenstern-Platz 1 \\
	1090 Vienna, Austria\\ \printead{e1}}

\runauthor{L. Steinberger}

\begin{abstract}
Recently, several authors have re-examined the power of the classical $F$-test in a non-Gaussian linear regression under a ``large-$p$, large-$n$'' framework \citep[e.g.][]{Zhong11, Wang13}. They highlight the loss of power as the number of regressors $p$ increases relative to sample size $n$. These papers essentially focus only on the overall test of the null hypothesis that all $p$ slope coefficients are equal to zero. Here, we consider the general case of testing $q$ linear hypotheses on the $p+1$-dimensional regression parameter vector that includes $p$ slope coefficients and an intercept parameter. In the case of Gaussian design, we describe the dependence of the local asymptotic power function on both the relative number of parameters $p/n$ and the relative number of hypotheses $q/n$ being tested, showing that the negative effect of dimensionality is less severe if the number of hypotheses is small. Using the recent work of \citet[][]{Sriva13} on high-dimensional sample covariance matrices we are also able to substantially generalize previous results for non-Gaussian regressors. 
\end{abstract}

\begin{keyword}[class=MSC]
\kwd[Primary ]{62F03}
\kwd{62F05}
\kwd[; secondary ]{62J05}
\kwd{60F05}
\kwd{60B20}
\end{keyword}

\begin{keyword}
\kwd{high-dimensional linear regression} 
\kwd{F-Test} 
\kwd{multiple hypothesis testing} 
\kwd{large-$p$ asymptotics}
\end{keyword}


\tableofcontents

\end{frontmatter}

\section{Introduction}
\label{sec:intro}

Following a suggestion of \citet{Bai96} to investigate classical statistical procedures in high-dimensional settings, \citet{Wang13} re-examine the usual $F$-test in the linear regression model under a ``large $p$, large $n$'' asymptotic framework. They derive the asymptotic power in a fairly general, non-Gaussian setting, highlighting the dependence of the local power function on the dimensionality of the problem, i.e., on the limit $\rho  = \lim p/n \in (0,1)$, where $n$ is sample size, and $p$ is the number of regressors in the model. In particular, they find that the rejection probability of the $F$-test for $H_0 : R\beta=r_0$, where $R = [0,I_q]$ and $p/n\to\rho$, $q/n\to\rho$, satisfies
\begin{align}\label{eq:WCresult}
\P(F_n>f_{q,n-p-1}^{(1-\nu)}) - \Phi\left( -\zeta_{1-\nu}  + \sqrt{n}\Delta_\beta \sqrt{\frac{1-\rho}{2\rho}}\right) \; \xrightarrow[]{} \; 0.
\end{align}
Here $F_n$ is the usual $F$-statistic, $f_{q,n-p-1}^{(1-\nu)}$ is the appropriate $F$-quantile, $\Phi$ is the cdf of the standard normal distribution, $\zeta_{1-\nu} = \Phi^{-1}(1-\nu)$ and $\Delta_\beta = (R\beta-r_0)'(R\Sigma^{-1}R')^{-1}(R\beta-r_0)/\sigma^2$ is the scaled distance from the null hypothesis.
From this approximation we see that the local asymptotic power of the $F$-test depends monotonically on the value of $\rho$ and inflates to the nominal significance level $\nu$ as $\rho$ increases to one. The result of \citet{Wang13} is consistent with the derivations of the local asymptotic power in the case of Gaussian errors, as obtained by \citet{Zhong11}. Both of these studies consider only the overall $F$-test for the null hypothesis that all, or almost all (cf. Condition~(C3) in \citet{Wang13}) of the $p$ slope coefficients are equal to zero. Also, they do not consider hypotheses involving the intercept parameter. Here, we extend this analysis and study the problem of testing $q$ general linear hypothesis (including also hypotheses on the intercept term), without the restriction that $(p-q)/n\to 0$. In this sense, we examine the effect of the dimension of the null hypothesis (i.e., the number of linear restrictions being tested) on the local asymptotic rejection probability of the $F$-test. We find that when testing the null hypothesis $H_0: R_0\gamma = r_0$, for some $q\times (p+1)$ matrix $R_0$ of rank $q\le p+1$, such that $p/n\to\rho_1$ and $q/n\to\rho_2\le\rho_1$, the rejection probability of the $F$-test satisfies
\begin{align}\label{eq:OURresult}
\P(F_n>f_{q,n-p-1}^{(1-\nu)}) - \Phi\left( -\zeta_{1-\nu}  +  \sqrt{n}\Delta_\gamma \sqrt{\frac{(1-\rho_1)(1-\rho_1+\rho_2)}{2\rho_2}}\right) \; \to \;0.
\end{align}
Now the asymptotic rejection probability depends also on the mean $\mu\in\R^p$ of the random design through $\Delta_\gamma = (R_0\gamma-r_0)'(R_0S^{-1}R_0')^{-1}(R_0\gamma-r_0)/\sigma^2$, where $\gamma = (\alpha,\beta')'$ is the vector of regression coefficients including an intercept parameter $\alpha\in\R$ and $$S = \begin{bmatrix} 1 &\mu'\\ \mu &\Sigma+\mu\mu'\end{bmatrix}.$$
This limiting expression coincides with that in \eqref{eq:WCresult} if $\rho_1=\rho_2$ and $R_0 = [0,R]$. But \eqref{eq:OURresult} refines the statement in \eqref{eq:WCresult} and shows the impact of both the relative number of regressors $\rho_1$ and the relative number of hypotheses $\rho_2$. These quantities affect the asymptotic rejection probability monotonically, which is consistent with small sample analyses in the Gaussian error case \citep[cf.][]{Ghosh73}. However, in contrast to the complicated nature of the cdf of the non-central $F$-distribution as a function in $p$, $q$ and the non-centrality parameter, our asymptotic approximation to the rejection probability depends on the quantities $\rho_1$, $\rho_2$ and $\Delta_\gamma$ only through elementary operations and the Gaussian cdf, and it is valid for a large class of error distributions. In particular, we see that even if $\rho_1$ is close to $1$, the $F$-test still has power if $\rho_2$ is sufficiently small. 
In a second step, and under slightly more restrictive assumptions on the data generating process, we also investigate the case where only a very small relative number of hypotheses $q/n$ is tested, i.e., $q$ is bounded as $n\to\infty$ and $\rho_2=0$, and the result in \eqref{eq:OURresult} no longer holds.

Our work heavily builds on the ideas of \citet{Wang13} (hereafter abbreviated as WC). The first part of the present work is concerned with reproducing their results under substantially more general assumptions. First of all, here we do not require independence between the random design and the error terms, but we assume only the usual first and second order specification of conditional moments of the errors given the design. This extension requires a slight modification of the result of \citet{Bhansali07} on the asymptotic normality of certain quadratic forms as applied by WC (cf. Lemma~\ref{lemma:Bhansali}). Furthermore, we do not assume that the $n\times p$ design matrix $X$, after standardization, consists of i.i.d. components, as is needed for the application of the famous Bai-Yin Theorem \citep{Bai93} used by WC in order to control extreme eigenvalues of large sample covariance matrices. Instead, we apply a recent result of \citet{Sriva13} which essentially requires only certain moment restrictions on the i.i.d. rows of $X$. For our extensions, we also develop a novel result on the diagonal entries of a fairly general random projection matrix that might be of interest on its own (see Lemma~\ref{lemma:ProjDiag}). It has the statistical interpretation that in a moderately high-dimensional regression the leverage values $h_i$, i.e., the diagonal entries of the projection matrix $U(U'U)^{-1}U'$, where $U=[\iota,X]$ is the design matrix including an intercept column, typically behave like $p/n$. Finally, we point out that since we also consider tests on the intercept parameter, the distribution of the $F$-statistic, in general, also depends on the mean $\mu$ of the random design vectors $x_1,\dots, x_n$. This causes certain technical complications due to non-centrality issues which are often avoided in the literature on random design regression by restricting to the case $\mu=0$. Here, we present a detailed treatment of the general case.

The paper is organized as follows. Section~\ref{sec:main} introduces the setup and notation and presents our main results in Theorem~\ref{thm:main} and Corollary~\ref{corr:main}, which provide a precise formulation of the statement in \eqref{eq:OURresult}. In Section~\ref{sec:smallq}, we specifically consider the situation where $q$ is fixed and also provide a unifying result that does not distinguish between large or small $q$. Next, in Section~\ref{sec:DiscAss} we provide a detailed discussion of our technical assumptions and explain the main differences to those imposed by WC. The results of an extensive numerical study are reported in Section~\ref{sec:sim}. Finally, Section~\ref{sec:proof} provides the basic steps in the proof of our main results. Some of the more technical arguments are deferred to the appendices.

%
%


\section{Model formulation and main results}
\label{sec:main}

We consider a random array $\{ (y_{i,n},x_{i,n}) : 1\le i\le n, n\ge 1\}$ where, for each $n\in\N$, the pairs $(y_{i,n},x_{i,n})_{i=1}^n$ are i.i.d. observations of a real valued response variable $y_{1,n}$ and $p_n$-dimensional random regressors $x_{1,n}$ with $p_n<n-1$, satisfying $\E[y_{1,n}|x_{1,n}] = \alpha_n + \beta_n'x_{1,n}$ and $\Var[y_{1,n}|x_{1,n}] = \sigma^2_n\in(0,\infty)$. Equivalently, writing $\eps_{i,n} = y_{i,n} - \E[y_{i,n}|x_{i,n}]$, the observations can be represented as
\begin{equation}
\label{eq:linmod}
y_{i,n} = \alpha_n + \beta_n'x_{i,n} + \eps_{i,n}, \hspace{1cm} i=1,\dots, n,
\end{equation}
where the $(\eps_{i,n})_{i=1}^n$ are i.i.d., satisfying $\E[\eps_{i,n}|x_{i,n}] = 0$ and $\Var[\eps_{i,n}|x_{i,n}] = \sigma_n^2$. Note that $\eps_{1,n}$ does not need to be independent of $x_{1,n}$. 
For identifiability, we also assume that $\Sigma_n := \Var[x_{1,n}]$ is positive definite and we define $\mu_n := \E[x_{1,n}]$. Furthermore, we adopt the matrix notation $Y_n = (y_{1,n},\dots,y_{n,n})'$, $X_n = [x_{1,n},\dots, x_{n,n}]'$, $\eps_n = (\eps_{1,n},\dots, \eps_{n,n})'$, $\gamma_n = (\alpha_n,\beta_n')'$ and $U_n = [\iota_n,X_n]$, where $\iota_n = (1,1,\dots,1)' \in\R^n$. For notational convenience we will drop the subscript $n$ whenever there is no risk of confusion, i.e., we write $Y=Y_n$, $X=X_n$, $\alpha=\alpha_n$, $\beta=\beta_n$, etc., keeping in mind that, unless noted otherwise, all quantities to follow depend on sample size $n$. With this, the model equations in \eqref{eq:linmod} become
\begin{equation}
Y\; =\; U\gamma\; +\; \eps.
\end{equation}

We want to test a general linear hypothesis on the coefficients $\gamma$, i.e.,
\begin{equation}\label{eq:H0}
H_0 : R_0\gamma = r_0\quad \text{vs.} \quad H_1 : R_0\gamma \ne r_0,
\end{equation}
where $R_0$ is a $q\times (p+1)$ matrix with $\rank{R_0} = q \le p+1$ and $r_0\in\R^q$. Without restriction we may assume that $R_0$ has orthonormal rows (premultiply \eqref{eq:H0} by $(R_0R_0')^{-1/2}$). We test $H_0$ by use of the $F$-statistic $F_n$ defined as 
\begin{equation}
\label{eq:Fstat}
F_n \;=\; \frac{(R_0\hat{\gamma}_n - r_0)'(R_0(U'U)^{-1}R_0')^{-1}(R_0\hat{\gamma}_n-r_0)/q}{(Y-U\hat{\gamma}_n)'(Y-U\hat{\gamma}_n)/(n-p-1)},
\end{equation} 
provided that all the appearing quantities are well defined, and $F_n = 0$, otherwise.
The $F$-statistic is then compared to the $1-\nu$ quantile of an $F$-distribution with $q$ and $n-p-1$ degrees of freedom, which we denote by $f_{q,n-p-1}^{(1-\nu)}$. Here, $\hat{\gamma}_n = (\hat{\alpha}_n,\hat{\beta}_n')'$ is the OLS estimate in the unrestricted model. We also define the usual estimator of the error variance $\hat{\sigma}_n^2 = \|Y-U\hat{\gamma}_n\|^2/(n-p-1)$, that appears in the denominator of the $F$-statistic.

In Section~\ref{sec:proof} we prove the following results, involving the scaled distance from the null hypothesis 
$$\Delta_\gamma := (R_0\gamma - r_0)'(R_0S^{-1}R_0')^{-1}(R_0\gamma-r_0)/\sigma^2,
$$ 
where 
$$
S = \begin{bmatrix} 1 &\mu'\\ \mu &\Sigma+\mu\mu'\end{bmatrix} 
= \E\left[ \begin{pmatrix}1\\x_1\end{pmatrix} \begin{pmatrix}1 &x_1'\end{pmatrix} \right]
= \E\left[ U'U/n\right].
$$ A list of further technical conditions is given below in Section~\ref{sec:Assumptions}.

\begin{theorem}
\label{thm:main}
Consider the linear, homoskedastic model \eqref{eq:linmod} and set $s_n = 2 \left(\frac{1}{q_n}+\frac{1}{n-p_n-1}\right)$ and $b_n = \sqrt{\frac{(1-(p_n+1)/n)(1-(p_n+1)/n+q_n/n)}{2q_n/n}}$. Moreover, suppose that $\limsup_n p_n/n<1$, $q_n\to\infty$ and $\Delta_\gamma = o(q_n/n)$ as $n\to\infty$. If either one of the following three cases applies then the $F$-statistic satisfies
\begin{equation}\label{eq:thm:main}
s_n^{-1/2}(F_n-1) -  \sqrt{n}\Delta_\gamma b_n \quad \xrightarrow[n\to\infty]{w} \quad\mathcal N(0,1).
\end{equation}

\begin{enumerate}
	\setlength\leftmargin{-20pt}
	\renewcommand{\theenumi}{(\roman{enumi})}
	\renewcommand{\labelenumi}{{\theenumi}} 
\item \label{thm:R=I}
The assumptions~\ref{a.design}.(\ref{a.factor},\ref{a.invertibility},\ref{a.Srivastava},\ref{a.moments1}) and \ref{a.error} on the random design and on the error distribution, are satisfied, and either $R_0=I_{p_n+1}$ for all $n\in\N$, or $R_0=[0,I_{p_n}]$ for all $n\in\N$. (In this case, either $q_n=p_n$ or $q_n=p_n+1$, and thus $(p_n-q_n)/n \to 0$ holds.)

\item \label{thm:A}
The assumptions~\ref{a.design}.(\ref{a.factor},\ref{a.invertibility},\ref{a.Srivastava},\ref{a.moments1},\ref{a.moments2}) and  \ref{a.error} on the random design and on the error distribution, are satisfied, $(p_n-q_n)/n \to 0$, and $(R_0\gamma - r_0)' R_0 S R_0' (R_0\gamma - r_0)/\sigma^2 = O(\sqrt{q_n}/n)$ holds.\footnote{Notice that this additional requirement implies and strengthens the assumption that $\Delta_\gamma = o(q_n/n)$. Simply observe that, by block matrix inversion, $R_0S^{-1}R_0' = (R_0S R_0' - R_0S R_1'(R_1 S R_1')^{-1}R_1 S R_0')^{-1}$, where $R_1$ is a $(p_n+1-q_n)\times (p_n+1)$ matrix with orthonormal rows which are also orthogonal to the rows of $R_0$. Therefore, $\Delta_\gamma \le (R_0\gamma - r_0)' R_0 S R_0' (R_0\gamma - r_0)/\sigma^2= O(\sqrt{q_n}/n)$.}

\item \label{thm:B}
Assumption~\ref{a.error} on the error distribution is satisfied and the design vectors $x_{1,n},\dots, x_{n,n}$ are i.i.d. Gaussian with mean $\mu_n\in\R^{p_n}$ and positive definite covariance matrix $\Sigma_n$.\footnote{It is easy to see that the normality assumption implies Assumption~\ref{a.design}.}
\end{enumerate}
\end{theorem}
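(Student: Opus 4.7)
My plan is to work conditionally on the design $U=[\iota,X]$ and exploit an orthogonal decomposition of the $F$-statistic. Writing $\hat\gamma = \gamma + (U'U)^{-1}U'\eps$ and setting $\delta := R_0\gamma - r_0$ and $M := R_0(U'U)^{-1}R_0'$, the quantities $N_n := (R_0\hat\gamma - r_0)'M^{-1}(R_0\hat\gamma - r_0)$ and $D_n := \|Y-U\hat\gamma\|^2$ that form the numerator and denominator of $F_n$ decompose as
\begin{equation*}
N_n \;=\; \delta'M^{-1}\delta \,+\, 2\delta'M^{-1}R_0(U'U)^{-1}U'\eps \,+\, \eps'P\eps,\qquad D_n \;=\; \eps'(I-P_U)\eps,
\end{equation*}
where $P:=U(U'U)^{-1}R_0'M^{-1}R_0(U'U)^{-1}U'$ is a rank-$q$ orthogonal projection whose range lies inside the column space of $U$, and $P_U=U(U'U)^{-1}U'$ is the hat matrix. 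The structural identity $P(I-P_U)=0$ ensures that the two quadratic forms in $\eps$ are conditionally uncorrelated given $U$.

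Writing $F_n - 1 = (N_n/q - D_n/(n-p-1))/(D_n/(n-p-1))$ and expanding in $D_n/((n-p-1)\sigma^2)\to 1$ gives
\begin{equation*}
F_n - 1 \;=\; \frac{\eps'P\eps - q\sigma^2}{q\sigma^2} \,-\, \frac{\eps'(I-P_U)\eps - (n-p-1)\sigma^2}{(n-p-1)\sigma^2} \,+\, \frac{\delta'M^{-1}\delta}{q\sigma^2} \,+\, r_n,
\end{equation*}
with $r_n$ collecting the cross term and higher-order contributions. The first two summands are centered quadratic forms with conditional variances $2\sigma^4/q$ and $2\sigma^4/(n-p-1)$; by the orthogonality of $P$ and $I-P_U$ their sum has conditional variance exactly $\sigma^4 s_n$. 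A Bhansali-type conditional CLT (Lemma~\ref{lemma:Bhansali}, a modification of \citet{Bhansali07} admitting errors that are merely mean-zero and homoskedastic given $U$) then shows that $s_n^{-1/2}$ times the stochastic part converges weakly to $\mathcal{N}(0,1)$; its applicability requires control of the diagonals of $P$ and $I-P_U$ (i.e.\ the leverages), which is supplied by Lemma~\ref{lemma:ProjDiag}. The cross term has conditional variance $4\sigma^2\delta'M^{-1}\delta$, and given the forthcoming bound $\delta'M^{-1}\delta = O_p(n\sigma^2\Delta_\gamma)$, the assumption $\Delta_\gamma = o(q/n)$ renders it $o_p(\sigma^2\sqrt{q})$ and hence negligible at the rate $\sqrt{s_n}$.

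The heart of the proof is to show that $\delta'M^{-1}\delta$ concentrates around $(n-p-1+q)\sigma^2\Delta_\gamma$ (not merely around the naive LLN guess $n\sigma^2\Delta_\gamma$); direct algebra then confirms that dividing by $q\sigma^2$ and multiplying by $s_n^{-1/2}$ reproduces exactly $\sqrt{n}\Delta_\gamma b_n$. The three cases correspond to three distinct routes to this concentration. In case~\ref{thm:B} (Gaussian design), rotating into a basis adapted to $R_0$ and applying block-Wishart identities yields $\delta'M^{-1}\delta \stackrel{d}{=} \sigma^2\Delta_\gamma\,\chi^2_{n-p-1+q}$, since the Schur complement of the relevant block of an inverse Wishart is $W_q(I_q,\,n-p-1+q)$; concentration is then immediate. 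In case~\ref{thm:R=I}, the choices $R_0=I_{p+1}$ or $R_0=[0,I_p]$ reduce $M^{-1}$ to $U'U$ or to $X'(I-\iota\iota'/n)X$, so $\delta'M^{-1}\delta$ becomes a sum of i.i.d.\ terms whose concentration follows from the standard LLN; since $p+1-q\in\{0,1\}$, the refinement from $n$ to $n-p-1+q$ is here asymptotically immaterial. Case~\ref{thm:A}, general $R_0$ under the weakest design hypotheses, must instead invoke \citet{Sriva13} to spectrally control $U'U/n$ (replacing Bai--Yin, which requires i.i.d.\ standardized entries) and combine this with a bilinear-form estimate for which the strengthened smallness $\delta'R_0SR_0'\delta/\sigma^2 = O(\sqrt{q}/n)$ is precisely tailored.

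The main technical obstacle is this concentration step in case~\ref{thm:A}: neither explicit distribution theory nor a simplified form of $M^{-1}$ is available, so one must combine Srivastava's spectral bound with careful bilinear-form estimates of the type $\delta'(U'U)^{-1}\delta$ and propagate the resulting control through Lemma~\ref{lemma:ProjDiag} to the random projection $P$, in order that Lemma~\ref{lemma:Bhansali} can be invoked at a rate compatible with $\sqrt{s_n}$. A secondary complication throughout is that a non-zero design mean $\mu$ couples the intercept column of $U$ with the slope tests via the full matrix $S$; the argument must accommodate general $S$ rather than the block-diagonal case $\mu=0$ often assumed in the random-design literature.
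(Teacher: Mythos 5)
Your decomposition is exactly the one the paper uses: your $P$ is the paper's $P_W=P_U-P_{U_1}$, the noise term $s_n^{-1/2}\eps'M_n\eps/\sigma_n^2$ is handled by the same conditional Bhansali-type CLT (Lemma~\ref{lemma:Bhansali}) together with the leverage Lemma~\ref{lemma:ProjDiag}, the cross term is killed by the same conditional variance bound, and the crucial observation that $\delta'M^{-1}\delta$ concentrates at $(n-p_n-1+q_n)\sigma_n^2\Delta_\gamma$ rather than at $n\sigma_n^2\Delta_\gamma$ is precisely what produces the factor $b_n$. The three routes you propose for the three cases also coincide with the paper's.

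Two points in your sketch would need repair before they become a proof. First, in case \ref{thm:B} the identity $\delta'M^{-1}\delta\stackrel{d}{=}\sigma^2\Delta_\gamma\,\chi^2_{n-p_n-1+q_n}$ holds only when the restriction does not touch the intercept (equivalently, when one may reduce to $\mu=0$); that is the content of Lemma~\ref{lemma:invWish}. When the first column of $R_0$ is nonzero and $\mu\ne0$, the conditional law of $U_0$ given $U_1$ is a noncentered Gaussian, and $\delta'M^{-1}\delta$ splits into a central Wishart quadratic form, a Gaussian cross term, and a term driven by $\iota'(I_n-P_{U_1})\iota$, each of which must be shown to concentrate separately (this is what Lemmas~\ref{lemma:SchurComp} and \ref{lemma:NormalProjMat} are for); you flag $\mu\ne0$ as a complication, but the clean $\chi^2$ identity you state is simply false in that regime. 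Second, in case \ref{thm:A} you cannot ``spectrally control $U'U/n$'' via \citet{Sriva13}: when $p_n/n\to\rho_1>0$ the matrix $U'U/n$ is not close to $S$ in spectral norm, no matter how nice the design. The paper only approximates the $(p_n+1-q_n)$-dimensional block $U_1'U_1/n$ by $\Omega_{11}$ --- consistent precisely because $(p_n-q_n)/n\to0$ --- and then still must prove concentration of the remaining bilinear form $\sqrt{n}\,\delta_\gamma'\bigl[\tfrac{U_0'U_1}{n}\Omega_{11}^{-1}\tfrac{U_1'U_0}{n}-\Omega_{01}\Omega_{11}^{-1}\Omega_{10}\bigr]\delta_\gamma$ by an Efron--Stein argument; that last step is the real technical core of case \ref{thm:A} and is absent from your outline.
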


By a simple argument involving Polya's theorem this translates into the following corollary on the rejection probability of the $F$-test.

\begin{corollary}
\label{corr:main}
If $\limsup_n p_n/n<1$ and $q_n\to\infty$, as $n\to\infty$, and the conclusion of Theorem~\ref{thm:main} holds, then the rejection probability of the $F$-test satisfies
\begin{equation*}
\P(F_n>f_{q_n,n-p_n-1}^{(1-\nu)}) - \Phi\left( -\zeta_{1-\nu}  +  \sqrt{n}\Delta_\gamma \sqrt{\frac{(1-\frac{p_n+1}{n})(1-\frac{p_n+1}{n}+\frac{q_n}{n})}{2q_n/n}}\right) \; \to \;0.
\end{equation*}
Here, $\zeta_{1-\nu} = \Phi^{-1}(1-\nu)$ is the $1-\nu$ quantile of the standard normal distribution and $\nu\in(0,1)$ does not depend on $n$.
\end{corollary}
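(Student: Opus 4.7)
The plan is to combine Theorem~\ref{thm:main} with Polya's theorem on the uniform convergence of distribution functions to a continuous limit, together with a matching computation for the $F$-quantile. First, Theorem~\ref{thm:main} asserts that $Z_n := s_n^{-1/2}(F_n-1) - \sqrt{n}\Delta_\gamma b_n$ converges weakly to $\mathcal N(0,1)$. Since $\Phi$ is continuous, Polya's theorem upgrades this to uniform convergence of the cdfs:
\[
\sup_{t\in\R}\bigl|\P(Z_n\le t) - \Phi(t)\bigr| \;\longrightarrow\; 0.
\]

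Second, I would set $\tau_n := s_n^{-1/2}(f_{q_n,n-p_n-1}^{(1-\nu)} - 1)$ and rewrite
\[
\P(F_n > f_{q_n,n-p_n-1}^{(1-\nu)}) \;=\; \P\bigl(Z_n > \tau_n - \sqrt{n}\Delta_\gamma b_n\bigr) \;=\; 1-\Phi\bigl(\tau_n - \sqrt{n}\Delta_\gamma b_n\bigr) + o(1),
\]
where the $o(1)$ is controlled by the uniform bound from the previous step, irrespective of the (possibly divergent) behaviour of $\sqrt{n}\Delta_\gamma b_n$.

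Third, I would verify that $\tau_n \to \zeta_{1-\nu}$. Since $q_n\to\infty$ and $\limsup_n p_n/n < 1$ forces $n-p_n-1\to\infty$, an $F_{q_n,n-p_n-1}$ variable can be represented as $(U/q_n)/(V/(n-p_n-1))$ for independent $U\sim\chi^2_{q_n}$ and $V\sim\chi^2_{n-p_n-1}$, and the CLT for chi-squared variables together with Slutsky's lemma yields $s_n^{-1/2}(F^*-1) \xrightarrow[]{w} \mathcal N(0,1)$ for $F^*\sim F_{q_n,n-p_n-1}$. Applying Polya's theorem once more, uniform convergence of these cdfs to the strictly increasing limit $\Phi$ implies convergence of the $(1-\nu)$-quantiles, i.e.\ $\tau_n \to \zeta_{1-\nu}$.

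Finally, since $\Phi$ is uniformly continuous and $\tau_n - \zeta_{1-\nu} \to 0$, we have $\Phi(\tau_n - \sqrt{n}\Delta_\gamma b_n) - \Phi(\zeta_{1-\nu} - \sqrt{n}\Delta_\gamma b_n) \to 0$ uniformly in the shift, and the identity $1-\Phi(\zeta_{1-\nu} - x) = \Phi(-\zeta_{1-\nu}+x)$ yields the displayed conclusion. No step here is delicate; the one piece that deserves care is the $F$-quantile asymptotic $\tau_n\to\zeta_{1-\nu}$, but this is just a routine chi-squared CLT, which is why the corollary is essentially immediate from Theorem~\ref{thm:main}.
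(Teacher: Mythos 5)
Your proposal is correct and follows essentially the same route as the paper: weak convergence from Theorem~\ref{thm:main} upgraded to uniform convergence of cdfs via Polya's theorem, combined with the quantile asymptotic $s_n^{-1/2}(f_{q_n,n-p_n-1}^{(1-\nu)}-1)\to\zeta_{1-\nu}$ (which the paper obtains from its Lemma on the non-central $F$-distribution, proved by exactly the chi-squared CLT argument you sketch) and the uniform continuity of $\Phi$.
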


\begin{proof}
It is easy to see, using Polay's theorem and Lemma~\ref{lemma:NCFdist}, that $\tilde{f}_n := s_n^{-1/2}(f_{q_n,n-p_n-1}^{(1-\nu)} - 1)$ satisfies
$\tilde{f}_n \to \zeta_{1-\nu}$. Now use the conclusion of Theorem~\ref{thm:main}, Polya's theorem and the Lipschitz continuity of $\Phi$ to obtain
\begin{align*}
&|\P(F_n>f_{q,n-p-1}^{(1-\nu)}) - \Phi(-\zeta_{1-\nu}+\eta_n^2)|\\
&\quad=| \P(s_n^{-1/2}(F_n-1)>\tilde{f}_n) - \Phi(-\zeta_{1-\nu} + \eta_n^2)| \\
&\quad=| \P(s_n^{-1/2}(F_n-1)-\eta_n^2\le\tilde{f}_n-\eta_n^2) - \Phi(\zeta_{1-\nu} - \eta_n^2)| \\
&\quad\le
| \P(s_n^{-1/2}(F_n-1)-\eta_n^2\le\tilde{f}_n-\eta_n^2) - \Phi(\tilde{f}_n - \eta_n^2)| \\
&\quad\quad+
| \Phi(\tilde{f}_n - \eta_n^2) - \Phi(\zeta_{1-\nu} - \eta_n^2)| \\
&\quad\le
\sup_{t\in\R}| \P(s_n^{-1/2}(F_n-1)-\eta_n^2 \le t) - \Phi(t)| + o(1) \xrightarrow[n\to\infty]{} 0,
\end{align*} 
where $\eta_n^2 := \sqrt{n}\Delta_\gamma b_n$.
\end{proof}

If $(p_n-q_n)/n\to 0$ and $0<\liminf_n q_n/n \le \limsup_n p_n/n <1$, then Corollary~\ref{corr:main} recovers the result of WC under weaker assumptions on the joint distribution of the design and the errors, and for a null hypothesis that possibly restricts also the intercept parameter $\alpha$ (cf. the assumptions of Theorem~\ref{thm:main}\ref{thm:R=I} and \ref{thm:A}). In this case the factor $b_n$ above asymptotically reduces to 
$$ b_n = \sqrt{\frac{(1-\frac{p_n+1}{n})(1-\frac{p_n+1}{n}+\frac{q_n}{n})}{2q_n/n}} \thicksim \sqrt{\frac{1-\frac{p_n+1}{n}}{2\frac{p_n+1}{n}}},$$ as in \eqref{eq:WCresult}. It highlights the dependence of the power function on the relative number of regressors $p_n/n$. However, since $(p_n-q_n)/n\to 0$, the individual roles of $p_n$ and $q_n$ can not be discerned. This shortcoming is removed here, but it comes at the price of a stronger design condition (cf. Theorem~\ref{thm:main}\ref{thm:B}). It is tempting, however, to conjecture that Assumptions~\ref{a.design} and \ref{a.error} are actually sufficient also for the general case. Corollary~\ref{corr:main} nicely shows the effect of both the dimension of the parameter space as well as the dimension of the null hypothesis, on the asymptotic power function. In particular, we see that even in a case where the relative number of regressors $p_n/n$ is large, the classical $F$-test still has power, as long as we are interested in testing only a relatively small number of hypotheses. 
However, we should make a cautionary remark at this point. In Theorem~\ref{thm:main} and Corollary~\ref{corr:main} we have assumed that $q_n\to\infty$. If the number of hypotheses $q$ being tested is too small, then the asymptotic approximation presented above will not be very accurate, in the same way the $\chi_q^2$ distribution is not very accurately approximated by the normal if $q$ is small. Therefore, in the next section we specifically study the case where $q_n$ is bounded.

In Theorem~\ref{thm:main}, we treat the special cases of $R_0=I_{p+1}$ and $R_0=[0,I_p]$ separately, because here it is considerably much easier to deal with the non-centrality term in the decomposition of the $F$-statistic (see Section~\ref{sec:NCP1}). In particular, in this case we do not need to impose further restrictions on the distance from the null $\Delta_\gamma$ other than that it is of order $o(q_n/n)$ as $n\to\infty$ and we can also work with weaker design conditions (cf. Theorem~\ref{thm:main}\ref{thm:R=I}).

\begin{remark}[On the detection boundary of the $F$-test]\normalfont 
\label{remark:detectionBoundary}
In the classical setting, where $q$ and $p$ are fixed, while $n$ goes to infinity, it is well known that the detection boundary of the $F$-test is $n^{-1/2}$. This means that a violation of the null hypothesis $H_0: R_0\gamma = r_0$ that is of the order $\|R_0\gamma - r_0\|_2 \asymp n^{-1/2}$ leads to non-trivial asymptotic power, while a slower order yields asymptotic power equal to the size of the test and a larger order yields asymptotic power equal to one.
However, in general, when $q = q_n$ and $p=p_n$ are allowed to grow with sample size $n$, the detection boundary of the $F$-test is no longer $n^{-1/2}$ but rather $q_n^{1/4}/n^{1/2}$. To see this, first we ignore the influence of the nuisance parameters and set $\mu=0$, $\Sigma=I_p$ and $\sigma^2 = 1$, so that $S = I_{p+1}$ and $\Delta_\gamma = \|R_0\gamma-r_0\|_2^2$. From Corollary~\ref{corr:main}, we see that in order to obtain non-trivial asymptotic power, i.e., asymptotic power in the open interval $(\nu, 1)$, the non-centrality term 
$$
\sqrt{n}\Delta_\gamma b_n \;=\; \left(\frac{n^{1/2}}{q_n^{1/4}} \|R_0\gamma - r_0\|_2\right)^2 \sqrt{\frac{1}{2}\left(1-\frac{p_n+1}{n}\right)\left(1-\frac{p_n+1}{n}+\frac{q_n}{n}\right)},
$$ 
has to stay away from $0$ and $\infty$. If we exclude the pathological case $\limsup_n p_n/n= 1$, then this requirement is met if the violation of the null hypothesis is of the order $\|R_0\gamma - r_0\|_2 \asymp q_n^{1/4}/n^{1/2}$.
\end{remark}

\begin{remark}[Gaussian errors and fixed design]\normalfont
\label{remark:Gaussian}
In the classical case where the error $\eps$ follows a spherical normal distribution which is independent of the design, the $F$-statistic \eqref{eq:Fstat} follows a non-central $F$-distribution with $q_n$ and $n-p_n-1$ degrees of freedom and non-centrality parameter $\varsigma_n^2 = n\nabla_n$, conditional on $X$, where $\nabla_n = (R_0\gamma - r_0)'(R_0(U'U/n)^{-1}R_0')^{-1}(R_0\gamma-r_0)/\sigma^2$ \citep[cf.][page 41]{Rao95}. Nevertheless, even in this traditional case, only basic monotonicity results are available for the power function $\P(F_n>f_{q_n,n-p_n-1}^{(1-\nu)}|X)$ as a function of $q_n$, $n-p_n-1$ and $\varsigma_n^2$ \citep[e.g.,][]{Ghosh73, Aubel03}. In Section~\ref{sec:NCP} we investigate $\nabla_n$ as $n\to\infty$, such that $\limsup_n p_n/n <1$. Our results provide approximations for the average (or unconditional) rejection probability, i.e., for $\E[\P(F_n>f_{q_n,n-p_n-1}^{(1-\nu)}|X)]$, which are given by the Gaussian cdf applied to an elementary function in $p_n/n$, $q_n/n$ and $\Delta_\gamma= (R_0\gamma - r_0)'(R_0 S^{-1}R_0')^{-1}(R_0\gamma-r_0)/\sigma^2$ and which are therefore easy to interpret (cf. Corollary~\ref{corr:main}). 
\end{remark}

\begin{remark}[On omitted variable misspecification]\normalfont
\label{remark:Misspec}
One major motivation for us to extend the result of WC to scenarios where there is some dependence between the design and the errors, and also among the components of the standardized design vectors themselves (see Section~\ref{sec:DiscAss} for details), was to treat simple sub-models of high-dimensional linear models. These sub-models typically exhibit misspecification due to omitted regressor variables. Consider an i.i.d. sample $(y_i, z_i)_{i=1}^n$ from the high-dimensional linear model
$$
y_i \;=\; \theta'z_i\;+\;u_i,
$$ 
where the $z_i$ are random $d$-vectors with $d\gg n$ that are independent of the $u_i$, which satisfy $\E[u_i]=0$ and $\E[u_i^2]\in(0,\infty)$. Moreover, assume that the marginal distribution of the regressors $z_i$ can be represented as $z_i = \Omega^{1/2}\tilde{z}_i$, where $\Omega^{1/2}$ is symmetric and positive definite and $\tilde{z}_i$ has a Lebesgue density $f_{\tilde{z}}$ which is such that the components of $\tilde{z}_i$ are independent with zero means, unit variances and bounded $8$-th moments, so that in particular $\Cov[z_i]=\Omega$. 

Now suppose we want to use only a small number $p$ of the $d$ available regressors, with $p<n$, so that classical regression methods are feasible within this subset regression problem. These working regressors can be described as $$x_i = M'z_i,$$ where $M$ is a $d\times p$ matrix of full rank $p<d$. For instance, $M$ could be a selection matrix so that $x_i$ consists of a certain choice of $p$ components of $z_i$. In such a situation, the sample $(y_i,x_i)_{i=1}^n$ need not follow a linear homoskedastic model as in \eqref{eq:linmod}, because, in general, the conditional expectation $\E[y_i|x_i]$ is not linear in $x_i$ and the conditional variance $\Var[y_i|x_i]$ is not constant if the pair $(y_i,x_i)$ is not jointly Gaussian. However, one can always write 
$$
y_i \;=\; \tilde{\beta}'x_i \;+\;\xi_i,
$$
where $\tilde{\beta} = \argmin_b \E[(y_i-b'x_i)^2]$ and $\xi_i= y_i - \tilde{\beta}'x_i$. Here, the parameter $\tilde{\beta}$ corresponds to the best linear predictor of $y_i$ given $x_i$, and it may be of interest to test whether $H_0: \tilde{\beta} = 0$, i.e., whether the selected regressors $x_i = M'z_i$ have any value for linearly predicting the response variables $y_i$. 

Clearly, in the present setting the errors $\xi_i$ are not independent of the design vectors $x_i$ and, in general, $\E[\xi_i|x_i]\ne 0$ and $\Var[\xi_i|x_i]\ne \Var[\xi_i]$. Now consider the unobservable corrected sample $(y_i^*,x_i)_{i=1}^n$, where 
$$
y_i^* \;=\; \tilde{\beta}'x_i \;+\; \sqrt{\frac{\Var[\xi_i]}{\Var[\xi_i|x_i]}} (\xi_i-\E[\xi_i|x_i]).
$$
The corrected sample clearly follows a linear homoskedastic model as in \eqref{eq:linmod}, because $\E[y_i^*|x_i] = \tilde{\beta}'x_i$ and $\Var[y_i^*|x_i] = \Var[\xi_i]$. Moreover, the corrected sample satisfies Assumption~\ref{a.design} in view of Lemma~\ref{lemma:product} applied with $\Gamma = M'\Omega^{1/2}$, $\mu=0$ and $m_n=d$, and because the design matrix $X=[x_1,\dots, x_n]'$ has a Lebesgue density on $\R^{n\times p}$. Of course, in general, the actual sample $(y_i,x_i)_{i=1}^n$ may be very different from the corrected sample $(y_i^*,x_i)_{i=1}^n$, but the results of \citet{Steinb14} suggest that if $d\gg p$, then $\E[\xi_i|x_i]\approx 0$ and $\Var[\xi_i|x_i]\approx\Var[\xi_i]$, at least for a large collection of selection matrices $M$. Therefore, the observed sample and the corrected sample should be very similar if $d$ is large relative to $p$, and one might expect that also for a sufficiently regular statistic $T$, we have $T( (y_i,x_i)_{i=1}^n) \approx T( (y_i^*,x_i)_{i=1}^n)$. We suspect that the results of \citet{Steinb14} can also be used to establish the validity of Assumption~\ref{a.error} for the corrected sample, with $e_{i,n} := \sqrt{\Var[\xi_i]/\Var[\xi_i|x_i]}$ and $\tilde{\eps}_{i,n} := \xi_i-\E[\xi_i|x_i]$. Thus, we expect the $F$-test for $H_0:\tilde{\beta}=0$ based on the sample $(y_i,x_i)_{i=1}^n$ to be asymptotically valid and even possess similar power as the $F$-test in a correctly specified model, for most choices of $M$, provided that $d=d_n$ and $p=p_n$ tend to infinity along with $n$, such that $d_n\gg p_n$. The details of this line of reasoning will be further developed elsewhere.
\end{remark}

\subsection{Technical conditions}
\label{sec:Assumptions}

Throughout this paper, the reader will encounter several different norms. For vectors $v\in\R^k$ we write $\|v\| = (\sum_{i=1}^k v_i^2)^{1/2}$ for the usual Euclidean norm, whereas for matrices $M\in\R^{k\times \ell}$ we distinguish between the spectral norm $\|M\|_S = (\lmax{M'M})^{1/2}$ and the Frobenius norm $\|M\|_F = (\trace{M'M})^{1/2}$. We write $P_M$ for the matrix of orthogonal projection onto the column span of $M$. If $M$ satisfies $\rank{M} = \ell \le k$, then $P_M = M(M'M)^{-1}M'$. We also make use of the stochastic Landau notation. For a sequence of real random variables $z_n$, we say that $z_n=O_\P(1)$ if the sequence is bounded in probability, i.e., if $\sup_{n\in\N}\P(|z_n| > \delta) \to 0$ as $\delta\to\infty$, and we say that $z_n=o_\P(1)$ if $z_n\to0$ in probability. For a non-stochastic real sequence $a_n\ne 0$ we write $z_n = O_\P(a_n)$ if $z_n/a_n = O_\P(1)$ and $z_n = o_\P(a_n)$ if $z_n/a_n = o_\P(1)$.

The following is a list of technical assumptions needed in the proof of Theorem~\ref{thm:main}.
\begin{enumerate}
        \setlength\leftmargin{-20pt}
\renewcommand{\theenumi}{(A\arabic{enumi})}
\renewcommand{\labelenumi}{\textbf{\theenumi}} 

\item \label{a.design} 
	\begin{enumerate}
	\renewcommand{\theenumii}{\alph{enumii}}
	\renewcommand\labelenumii{(\theenumii)}
	\makeatletter \renewcommand\p@enumii{} \makeatother

	\item \label{a.factor}
		The design vectors $x_{i,n}$ are linearly generated as follows: $$ x_{i,n} = \mu_n + \Gamma_n z_{i,n},$$ where $\Gamma_n$ is a $p_n \times m_n$ matrix with $m_n\ge p_n$, such that $\Gamma_n\Gamma_n' = \Sigma_n$. The random $m_n$-vectors $z_{1,n}, \dots, z_{n,n}$ are i.i.d. and satisfy $\E[z_{1,n}]=0$, $\E[z_{1,n}z_{1,n}'] = I_{m_n}$.
		
	\item \label{a.invertibility} 
		The $(n-1)\times(p_n+1)$ matrix $U_{n,-1} = [0,I_{n-1}] U_n$ has rank $p_n+1$ with probability one. (In particular, $\P(\det{U_n'U_n}=0)=0$.)
	
	\item \label{a.Srivastava}
		For every $n\in\N$, the random $m_n$-vector $z_{1,n}$ from Assumption~\ref{a.design}.(\ref{a.factor}) also has the following property. There exist universal positive constants $c$ and $C$, not depending on $n$, such that for every orthogonal projection $P$ in $\R^{m_n}$ and for every $t>C\rank{P}$, we have $\P(\|Pz_{1,n}\|^2 > t) \le Ct^{-1-c}$.
	
	\item \label{a.moments1}
		Let $z_{1,n}$ be the random $m_n$-vector from Assumption~\ref{a.design}.(\ref{a.factor}). For $\mathcal L_{k,n} := \sup_{\|v\|=1} (\E|v'z_{1,n}|^k)^{1/k}$ we have $\mathcal L_{4,n} =O(1)$ as $n\to\infty$. For every symmetric matrix $M_n\in\R^{m_n\times m_n}$ we have $\Var[z_{1,n}'M_nz_{1,n}] = O(\trace{M_n^2}) + (\trace{M_n})^2o(1)$, as $n\to\infty$.
		
	\item \label{a.moments2}	
		In addition to \ref{a.design}.(\ref{a.moments1}), we also have $\mathcal L_{8,n} =O(1)$ as $n\to\infty$ and for any projection matrix $P_n$ in $\R^{m_n}$, $(\E[(z_{1,n}'P_nz_{1,n})^4])^{1/4} = O(\|P_n\|_F^2)$ as $n\to\infty$.

	\end{enumerate}
	
\item \label{a.error}
The error terms $\eps_{i,n}$ can be written as $\eps_{i,n} = e_{i,n} \tilde{\eps}_{i,n}$, where $e_{i,n}$ is $x_{i,n}$-measurable and such that $\max_{i=1,\dots,n}e_{i,n} = O_\P(1)$, and the $\tilde{\eps}_{i,n}$ have the following properties. There exists a universal constant $\kappa>0$, not depending on $n$, such that $\E[(\E[|\tilde{\eps}_{1,n}/\sigma_n|^4|x_{1,n}])^{1+\kappa}]=O(1)$ as $n\to\infty$, and $\max_i \E[(\tilde{\eps}_{i,n}/\sigma_n)^4|x_{i,n}] = o_\P(\sqrt{q_n})$.

\end{enumerate}

This set of assumptions is weaker than the analogous conditions imposed by WC to treat the case $(p_n-q_n)/n\to0$. A detailed discussion and comparison of the differences between our treatment and the one of WC is deferred to Section~\ref{sec:DiscAss}.


\section{Testing only a small number of hypotheses}
\label{sec:smallq}

In Theorem~\ref{thm:main} above, we needed the assumption that $q_n\to\infty$ as $n\to\infty$ in order to achieve asymptotic normality of the $F$-statistic. Since the asymptotic distribution of the $F$-statistic is well understood in the Gaussian error and fixed design case (cf. Remark~\ref{remark:Gaussian} and Lemma~\ref{lemma:NCFdist}), we expect the asymptotic distribution of the $F$-statistic to be $\chi_q^2$ rather than Gaussian if $q_n = q$ is fixed. The following result establishes this asymptotic $\chi^2$ distribution for the $F$-statistic when the error distribution of the model \ref{eq:linmod} is arbitrary up to bounded fourth moments. For this result we need somewhat stronger assumptions than those of Theorem~\ref{thm:main}. The proof is deferred to Section~\ref{sec:proof}.

\begin{theorem}
\label{thm:qfixed}
Consider the linear, homoskedastic model \eqref{eq:linmod} and assume that the design vectors $x_{1,n},\dots, x_{n,n}$ are i.i.d. Gaussian with mean $\mu_n\in\R^{p_n}$ and positive definite covariance matrix $\Sigma_n$, and that the design $X_n = [x_1,\dots, x_n]'$ is independent of the errors $\eps_n = (\eps_{1,n},\dots, \eps_{n,n})'$ which satisfy $\E[(\eps_{1,n}/\sigma_n)^4] = O(1)$. Moreover, suppose that $\limsup_n p_n/n<1$, $\Delta_\gamma = o(q_n/n)$ and that the null hypothesis does not involve a restriction on the intercept parameter (i.e., the first column of $R_0$ is equal to zero). If $q_n = q\in\N$ does not depend on $n$, then
\begin{align}
s_n^{-1/2}(F_n-1) -  \sqrt{n}\Delta_\gamma b_n \quad \xrightarrow[n\to\infty]{w} \quad (2q)^{-1/2}\chi_q^2 - \sqrt{q/2},
\end{align}
where $s_n$ and $b_n$ are as in Theorem~\ref{thm:main}.
\end{theorem}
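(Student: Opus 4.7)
The plan is to mirror the decomposition-and-CLT approach of the proof of Theorem~\ref{thm:main}, except that with $q_n = q$ fixed the conditional CLT will deliver a $\chi_q^2$ limit rather than a Gaussian one. First I would write
\begin{equation*}
qF_n \hat\sigma_n^2 \;=\; \|B\eps\|^2 \;+\; 2\delta'A^{-1}R_0(U'U)^{-1}U'\eps \;+\; \delta'A^{-1}\delta,
\end{equation*}
where $A := R_0(U'U)^{-1}R_0'$, $\delta := R_0\gamma - r_0$, and $B := A^{-1/2}R_0(U'U)^{-1}U'$ is a random $q\times n$ matrix satisfying $BB' = I_q$. Since $X$ and $\eps$ are independent and the components of $\eps$ are i.i.d.\ with bounded fourth moment, conditional on $X$ the vector $B\eps = \sum_{i=1}^n b_i \eps_{i,n}$ is a sum of $n$ independent mean-zero vectors with total covariance $\sigma_n^2 I_q$, so I would invoke the conditional Lyapunov CLT to conclude $B\eps/\sigma_n \mid X \to \mathcal{N}(0, I_q)$ in probability.

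The main obstacle is verifying the Lyapunov condition, which reduces to showing $\max_i (P_{U,R_0})_{ii} \xrightarrow{\P} 0$ (since $\|b_i\|^2 = (P_{U,R_0})_{ii}$ and $\sum_i \|b_i\|^2 = q$ give $\sum_i \|b_i\|^4 \le q\max_i (P_{U,R_0})_{ii}$). To control this diagonal maximum I would exploit that the null hypothesis does not involve the intercept: writing $R_0 = [0,\tilde R_0]$ and, after a rotational reparametrization of the slope vector that without loss of generality sets $\Sigma_n = I_{p_n}$ and $\tilde R_0 = [I_q, 0]$, Frisch-Waugh identifies $P_{U,R_0}$ with $P_{\tilde X_1}$, the orthogonal projection onto the column span of $\tilde X_1 := M_{[\iota,X_2]}X_1$. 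Then the bound $(P_{\tilde X_1})_{ii} \le \|(\tilde X_1)_i\|^2/\lmin{\tilde X_1'\tilde X_1}$ reduces matters to two estimates: a Marchenko--Pastur type lower bound $\lmin{\tilde X_1'\tilde X_1} \gtrsim n$ with high probability (since $\tilde X_1'\tilde X_1$ is Wishart with $n-p_n+q-1$ degrees of freedom, $q$ is fixed, and $\limsup_n p_n/n < 1$), and a union bound on the Gaussian row norms giving $\max_i \|(\tilde X_1)_i\|^2 = O_\P(\log n)$ via the chi-squared tail applied to the marginal distribution $(\tilde X_1)_i \sim \mathcal{N}(0, (M_{[\iota,X_2]})_{ii} I_q)$. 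Together these yield $\max_i (P_{U,R_0})_{ii} = O_\P(\log n/n) \to 0$.

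The remaining pieces are routine. The noncentrality term $\delta'A^{-1}\delta$ is $O_\P(n\sigma_n^2 \Delta_\gamma) = o_\P(1)$ since the same Wishart concentration yields $nA^{-1} \preceq C(R_0 S^{-1}R_0')^{-1}$ with high probability and the assumption $\Delta_\gamma = o(q/n) = o(1/n)$ forces $n\Delta_\gamma \to 0$; the cross term is then $o_\P(1)$ by Cauchy--Schwarz combined with the tightness of $\|B\eps\|^2$ from the CLT step. For the denominator, $\hat\sigma_n^2 = \eps'M_U\eps/(n-p_n-1) \xrightarrow{\P} \sigma_n^2$ follows from a direct mean-and-variance calculation using bounded fourth moments of $\eps_{1,n}/\sigma_n$, as in the proof of Theorem~\ref{thm:main}. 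Slutsky then delivers $qF_n \xrightarrow{w} \chi_q^2$; since $s_n^{-1/2} \to \sqrt{q/2}$, one has $s_n^{-1/2}(F_n-1) \xrightarrow{w} \sqrt{q/2}(\chi_q^2/q - 1) = (2q)^{-1/2}\chi_q^2 - \sqrt{q/2}$, while the local-alternative shift $\sqrt{n}\Delta_\gamma b_n$ vanishes because $\sqrt{n}\Delta_\gamma = o(n^{-1/2})$ and $b_n = O(\sqrt{n})$, yielding the stated limit.
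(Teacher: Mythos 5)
Your argument is correct, but it reaches the $\chi_q^2$ limit by a genuinely different route than the paper. The paper's proof exploits orthogonal invariance of the Gaussian design \emph{unconditionally}: after showing that $P_W$ differs from a rank-$q$ projection $P_A$ that is uniformly distributed on the Grassmann manifold only by a term $B$ with $\trace{B}=0$ and $\trace{B^2}\to0$, it writes $P_A=CC'$ with $C$ uniform on the Stiefel manifold and invokes the Diaconis--Freedman effect \citep[Theorem 2.1]{DuembCZ12} to get $C'\eps/\sigma_n\to\mathcal N(0,I_q)$, using only $\|\eps\|^2/n\to1$ and the independence of $C$ and $\eps$. You instead condition on the design and run a multivariate Lindeberg CLT for $B\eps=\sum_i b_i\eps_i$, pushing all the work into the leverage bound $\max_i(P_W)_{ii}=O_\P(\log n/n)$ via Frisch--Waugh, the smallest eigenvalue of a fixed-dimensional Wishart matrix, and a union bound on Gaussian row norms; the non-centrality and cross terms are then handled essentially as in the paper's Section~\ref{sec:NCP} (your Wishart concentration for $nA^{-1}$ is the content of Lemma~\ref{lemma:invWish}). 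Your route uses the fourth-moment condition on the errors more directly and is arguably more elementary (no Stiefel-manifold machinery), while the paper's route isolates exactly where Gaussianity of the design enters (rotational invariance) and recycles the negligibility of the intercept corrections. One step is stated too glibly: you cannot simultaneously normalize $\Sigma_n=I_{p_n}$ and $\tilde R_0=[I_q,0]$ by a single reparametrization, since whitening destroys the orthonormality of $\tilde R_0$; the reduction is nevertheless valid because $P_W$ depends on $X$ only through the nested column spans $\s([\iota,X])\supset\s([\iota,XT_1'])$, which equal $\s([\iota,Z])\supset\s([\iota,ZQ])$ for a standard Gaussian $Z$ and some $p\times(p-q)$ orthonormal $Q$, and right-rotational invariance of $Z$ then reduces this pair in distribution to the canonical split into the first $q$ and last $p-q$ columns. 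With that repair, and spelling out the routine passage from conditional weak convergence in probability to unconditional weak convergence (via conditional characteristic functions, as the paper itself does), your proof is complete.
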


Together with the asymptotic normality of the $F$-statistic in the case $q_n\to\infty$ (cf. Theorem~\ref{thm:main}), we can establish a non-central $F$ approximation for the $F$-statistic for any number of tested hypotheses $q_n$.

\begin{corollary}\label{corr:NCFapprox}
Consider the linear, homoskedastic model \eqref{eq:linmod} and assume that the design vectors $x_{1,n},\dots, x_{n,n}$ are i.i.d. Gaussian with mean $\mu_n\in\R^{p_n}$ and positive definite covariance matrix $\Sigma_n$, and that the design $X_n = [x_1,\dots, x_n]'$ is independent of the errors $\eps_n= (\eps_{1,n},\dots, \eps_{n,n})'$ which satisfy $\E[(\eps_{1,n}/\sigma_n)^4] = O(1)$. Moreover, suppose that $\limsup_n p_n/n<1$, $\Delta_\gamma = o(q_n/n)$ and that the null hypothesis does not involve a restriction on the intercept parameter (i.e., the first column of $R_0$ is equal to zero). Then, 
$$
\sup_{t\in\R} \left| \P(F_n\le t) - \P(F_{q_n,n-p_n-1}(\lambda_n)\le t)\right| \quad\xrightarrow[n\to\infty]{}\quad 0,
$$
where $F_{q_n,n-p_n-1}(\lambda_n)$ denotes a random variable following the non-central $F$ distribution with $q_n$ and $n-p_n-1$ degrees of freedom and non-centrality parameter $\lambda_n = \Delta_\gamma (n-p_n-1+q_n)$.
\end{corollary}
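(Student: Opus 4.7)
The goal is to show that the Kolmogorov distance between the laws of $F_n$ and $F_{q_n,n-p_n-1}(\lambda_n)$ vanishes. Since this distance is preserved under a common strictly monotone affine transformation, I would apply the map $T_n(x) := s_n^{-1/2}(x-1) - \sqrt{n}\Delta_\gamma b_n$ to both random variables and work with $T_n(F_n)$ and $T_n(F_{q_n,n-p_n-1}(\lambda_n))$. A direct calculation using the definitions of $s_n$, $b_n$, and $\lambda_n$ yields the algebraic identity $\sqrt{n}\Delta_\gamma b_n = \lambda_n/(q_n\sqrt{s_n})$, so that $T_n(x) = s_n^{-1/2}(x - 1 - \lambda_n/q_n)$; this is exactly the centering/scaling used in both Theorem~\ref{thm:main} and Theorem~\ref{thm:qfixed}.

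The main device is a subsequence argument combined with Polya's theorem. Suppose the sup-norm distance between the two cdfs did not go to zero; then one could extract $\varepsilon>0$ and a subsequence on which it exceeds $\varepsilon$. Along this subsequence, after passing to a further subsequence, either $q_n\to\infty$ or $q_n$ is eventually constant at some fixed $q\in\N$. In the first case Theorem~\ref{thm:main}\ref{thm:B} gives $T_n(F_n)\Rightarrow\mathcal{N}(0,1)$; in the second case Theorem~\ref{thm:qfixed} gives $T_n(F_n)\Rightarrow (2q)^{-1/2}\chi^2_q - \sqrt{q/2}$. Both limits are continuous distributions, so Polya's theorem upgrades pointwise cdf convergence to uniform convergence, and the triangle inequality then produces a contradiction with $\varepsilon$.

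It therefore remains to check that $T_n(F_{q_n,n-p_n-1}(\lambda_n))$ has the same weak limits along the same subsequences, which is essentially the content of Lemma~\ref{lemma:NCFdist}. Write $F_{q_n,n-p_n-1}(\lambda_n) = (Z_n/q_n)/(W_n/m_n)$ with $m_n := n-p_n-1$, $Z_n\sim\chi^2_{q_n}(\lambda_n)$ and $W_n\sim\chi^2_{m_n}$ independent, and standardize via $\tilde Z_n = (Z_n - q_n - \lambda_n)/\sqrt{2q_n+4\lambda_n}$ and $\tilde W_n = (W_n - m_n)/\sqrt{2m_n}$. Under $\Delta_\gamma = o(q_n/n)$ one has $\lambda_n/q_n\to 0$, and a first-order Taylor expansion of $(Z_n/q_n)/(W_n/m_n) - 1 - \lambda_n/q_n$ about $W_n/m_n = 1$ shows that $s_n^{-1/2}(F_{q_n,n-p_n-1}(\lambda_n) - 1 - \lambda_n/q_n)$ equals $\sqrt{m_n/(m_n+q_n)}\,\tilde Z_n - \sqrt{q_n/(m_n+q_n)}\,\tilde W_n$ up to an $o_\P(1)$ remainder. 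If $q_n\to\infty$, then $\tilde Z_n$ and $\tilde W_n$ are asymptotically independent standard normals and the squared coefficients sum to one, giving $\mathcal{N}(0,1)$; if $q_n\equiv q$, then $\lambda_n = \Delta_\gamma(m_n+q) = o(1)$, so $Z_n\Rightarrow\chi^2_q$, $W_n/m_n\to 1$ in probability, and $s_n^{-1/2}\to\sqrt{q/2}$, yielding the limit $\sqrt{q/2}(\chi^2_q/q - 1) = (2q)^{-1/2}\chi^2_q - \sqrt{q/2}$ by the continuous mapping theorem.

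The main obstacle is not any single computation but rather the need to accommodate two qualitatively different asymptotic regimes ($q_n$ bounded versus diverging) within a single uniform approximation statement; the subsequence-plus-Polya package is the cleanest way to handle this. The non-central $F$ asymptotics are routine moment/Taylor calculations that should already be present in Lemma~\ref{lemma:NCFdist}, and the work-intensive limits on the side of the actual $F$-statistic are entirely provided by Theorems~\ref{thm:main} and~\ref{thm:qfixed}.
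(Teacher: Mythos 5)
Your proposal is correct and follows essentially the same route as the paper: a subsequence extraction (exploiting that an integer sequence with a finite limit is eventually constant), an appeal to Theorem~\ref{thm:main}\ref{thm:B} in the $q_n\to\infty$ regime and to Theorem~\ref{thm:qfixed} in the fixed-$q$ regime, matching limits for the non-central $F$ quantiles via Lemma~\ref{lemma:NCFdist}, and Polya's theorem to upgrade to uniform convergence of the cdfs. Your explicit sketch of the non-central $F$ asymptotics merely reproduces the content of Lemma~\ref{lemma:NCFdist}, which the paper proves separately in the appendix, so nothing substantive is added or missing.
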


\begin{proof}
Suppose the claim does not hold. Then there exists a subsequence $n'$, such that the supremum converges to a positive number along $n'$. Then, by compactness of the closed unit interval and the extended real line, we can extract a further subsequence $n''$, such that $p_{n''}/n''\to \rho_1\in[0,1)$, $q_{n''}/n''\to \rho_2\in[0,\rho_1]$, and $q_{n''}\to q\in[1,\infty]$, as $n''\to\infty$. If $q=\infty$, then we are in the setting of Theorem~\ref{thm:main}\ref{thm:B} and we obtain asymptotic normality of $s_n^{-1/2}(F_n-1) -  \sqrt{n}\Delta_\gamma b_n$. Since the limiting distribution function is continuous, we get uniform convergence of the corresponding distribution functions, in view of Polya's theorem. Since by Lemma~\ref{lemma:NCFdist}, $s_n^{-1/2}(F_{q_n,n-p_n-1}(\lambda_n)-1) -  \sqrt{n}\Delta_\gamma b_n$ is also asymptotically standard normally distributed, we get a contradiction in that case.  If $q<\infty$, then $q_{n''}=q$ for all large $n''$, because $q_{n''}$ is integer valued. Thus Theorem~\ref{thm:qfixed} applies and shows that $s_n^{-1/2}(F_n-1) -  \sqrt{n}\Delta_\gamma b_n$ converges weakly to $(2q)^{-1/2}\chi_q^2 - \sqrt{q/2}$. Since the limiting distribution function is again continuous, and since Lemma~\ref{lemma:NCFdist} shows that $s_n^{-1/2}(F_{q_n,n-p_n-1}(\lambda_n)-1) -  \sqrt{n}\Delta_\gamma b_n$ has the same asymptotic distribution, we also get a contradiction in this case, upon using the same argument as before, involving Polya's theorem.
\end{proof}

Corollary~\ref{corr:NCFapprox} provides a unified treatment of the asymptotic behavior of the $F$-statistic without distinguishing between the cases $q_n = q$ and $q_n\to\infty$. However, this does not immediately lead to a neat formula for the local asymptotic power function of the $F$-test because of the complicated nature of the cdf of the non-central $F$-distribution. 

\begin{remark}[Fixed vs. random design]\normalfont
\label{rem:FixedVsRandom}
It is instructive to compare the non-central $F$ approximation of Corollary~\ref{corr:NCFapprox} to the distribution of the $F$-statistic in the Gaussian error and fixed design case (cf. Remark~\ref{remark:Gaussian}). In the former case the non-centrality parameter is given by $n\Delta_\gamma (n-p_n-1+q_n)/n$, while in the latter case it is $n\nabla_n$. Now if the design $X$ is random with i.i.d. rows and $S = \E[U'U/n]$, $U=[\iota, X]$, it turns out that $\Delta_\gamma := (R_0\gamma - r_0)'(R_0S^{-1}R_0')^{-1}(R_0\gamma-r_0)/\sigma^2$ is not a good approximation for $\nabla_n :=(R_0\gamma - r_0)'(R_0(U'U/n)^{-1}R_0')^{-1}(R_0\gamma-r_0)/\sigma^2$ if $p_n/n$ is not close to $q_n/n$, even if $n$ is very large. In fact, we need a correction factor of $\E[\chi_{n-p_n-1+q_n}^2/n] = (n-p_n-1+q_n)/n$ in order to account for the additional randomness coming from the design $X$ (cf. Lemma~\ref{lemma:invWish}). This correction factor, however, is close to one if $q_n/n\approx p_n/n$. If the number $q_n$ of hypotheses to be tested is much smaller than the number of parameters $p_n$, this correction is quite significant. Thus, the limiting distribution of the $F$-statistic under random design with $\E[U'U/n]=S$ is, in general, not equal to the distribution of the $F$-statistic under Gaussian errors and fixed design $X$ satisfying $U'U/n = S$. This distinction only occurs for $q_n\ll p_n$. In particular, this issue disappears completely if $p_n/n\approx 0$.
\end{remark}

\begin{remark}[On confidence sets for $R_0\gamma$]\normalfont
Corollary~\ref{corr:NCFapprox} can immediately be used to construct asymptotically valid confidence sets for $R_0\gamma$. Simply define
$$
CI_\nu \;=\; \left\{ r\in\R^q : (R_0\hat{\gamma}_n - r)'(R_0(U'U)^{-1}R_0')^{-1}(R_0\hat{\gamma}_n-r) \le f_{q,n-p-1}^{(1-\nu)} \hat{\sigma}_n^2q\right\},
$$
and note that $$\P(R_0\gamma \notin CI_\nu) = \P(F_n>f_{q,n-p-1}^{(1-\nu)}) \to \P(F_{q, n-p-1}(0)>f_{q,n-p-1}^{(1-\nu)}) = \nu,$$ where $F_n$ is the $F$-statistic under the null hypothesis $r_0 = R_0\gamma$.
\end{remark}


\section{Discussion of the technical assumptions}
\label{sec:DiscAss}

We pause for a moment to discuss the meaning of our Assumptions~\ref{a.design} and \ref{a.error}, as well as the other conditions used in Theorem~\ref{thm:main}, and we comment on the main differences to the conditions imposed in WC. 

First of all, Assumption~\ref{a.design}.(\ref{a.factor}) of linear generation of the design from possibly much higher dimensional random vectors also appears in WC who take it as a modification from \citet{Bai96}. We point out that this is a straight forward generalization of the case $m=p$, where moment restrictions have to be imposed directly on the design vectors $x_i$ (note that the components of $x_1$ may not be independent, even after standardization, whereas $x_1$ can still be linearly generated from a vector $z_1$ whose components are independent). Moreover, this assumption also allows for the interpretation that there is actually a much higher dimensional set of explanatory variables $z_i$ available whose dimensionality $m$ (possibly $m\gg n$) has already been reduced to $p<n$. See also Remark~\ref{remark:Misspec}.

Together with \ref{a.design}.(\ref{a.factor}), our Conditions~\ref{a.design}.(\ref{a.moments1},\ref{a.moments2}) replace and considerably relax Assumption~\ref{c.design} in WC, which reads as follows.
\begin{enumerate}
        \setlength\leftmargin{-20pt}
\renewcommand{\theenumi}{(C\arabic{enumi})}
\renewcommand{\labelenumi}{\textbf{\theenumi}} 

\item \label{c.design} $x_i$ is linearly generated by a m-variate random vector $z_i = (z_{i1}, . . . , z_{im})'$ so that $x_i = \Gamma z_i + \mu$, where $\Gamma$ is a $p \times m$ matrix for some $m \ge p$ such that $\Gamma\Gamma' = \Sigma$, each $z_{il}$ has finite $8$-th moment, $\E[z_i] = 0$, $\Var[z_i] = I_m$, $E[z_{ik}^4] = 3 + \Delta$ and for any $\sum_{j=1}^d \ell_j \le 8$, $\E[z_{1i_1}^{\ell_1}z_{1i_2}^{\ell_2}\cdots z_{1i_d}^{\ell_d}] = \E[z_{1i_1}^{\ell_1}]\E[z_{1i_2}^{\ell_2}]\cdots\E[z_{1i_d}^{\ell_d}]$, where $\Delta$ is some finite constant.\footnote{Note that this formulation, as it stands, is self-contradictory. Clearly, one has to assume that the indices $i_1,\dots, i_d$ are distinct, or otherwise $(C1)$ implies $1=\E[z_{11}^2] = \E[z_{11}^1 z_{11}^1] = \E[z_{11}]\E[z_{11}] = 0$.}
\end{enumerate}
In fact, in addition to \ref{c.design}, WC also need the $8$-th moments of $z_{ik}$ to be uniformly bounded so that none of them goes off to infinity as $n$ (and $m = m_n$) increases. The factorization requirement of the $8$-th mixed moments in \ref{c.design} is a straight forward relaxation of an independence assumption. However, just like the independence assumption, it rules out many spherical distributions (cf. Lemma~\ref{lemma:spherical}\ref{lemma:sph-A}). Therefore, moment conditions like \ref{a.design}.(\ref{a.moments1},\ref{a.moments2}) are much more natural to accommodate both product and spherical distributions. In fact, Condition~\ref{c.design}, together with uniform boundedness of $\E[z_{ik}^8]$, is strictly stronger than our Assumptions~\ref{a.design}.(\ref{a.factor},\ref{a.moments1},\ref{a.moments2}) (cf. Lemma~\ref{lemma:spherical}\ref{lemma:sph-C} and Lemma~\ref{lemma:product}\ref{lemma:product-B}).

Our Assumption~\ref{a.design}.(\ref{a.invertibility}) is important to guarantee that the $F$-statistic is equal to the expression on the right-hand-side of \eqref{eq:Fstat}, at least with asymptotic probability one, which is used in WC implicitly. The reason that we not only require almost sure invertibility of $U'U$ but also of the design matrix based on $n-1$ observations is only of a technical nature and plays an important role in the proof of Lemma~\ref{lemma:ProjDiag} (cf. the end of Subsection~\ref{sec:normality}), which is based on leave-one-out ideas. This lemma replaces the strong assumption of WC that there exists a global constant $c_1>0$, such that the smallest eigenvalue of the sample covariance matrix satisfies $\lmin{\tilde{X}'\tilde{X}/n} \ge c_1$, almost surely, where $\tilde{X} = (X-\iota\mu')\Sigma^{-1/2}$ is the design matrix based on the standardized regressors (cf. page 147 in WC).\footnote{Notice that this assumption rules out, for example, Gaussian design \citep[see, e.g.,][Theorem 2.1]{Edelman91}.}

Finally, Assumption~\ref{a.design}.(\ref{a.Srivastava}) is taken directly from \citet{Sriva13} to control the extreme eigenvalues of large sample covariance matrices, and different sets of sufficient conditions for \ref{a.design}.(\ref{a.Srivastava}) can be found in that reference. In WC, control of extreme eigenvalues is accomplished by use of the celebrated Bai-Yin Theorem of \citet{Bai93} (cf. Lemma~2 in WC). However, this comes at the price of the implicit assumption that the standardized design vector $\Sigma^{-1/2}(x_1-\mu)$ has independent components.\footnote{This is particularly inconvenient if one is interested in the case where the random vectors $z_1,\dots, z_n$ in (C1) (and \ref{a.design}.(\ref{a.factor})) have independent components. If both $z_1$ and $\Sigma^{-1/2}(x_1-\mu) = (\Gamma\Gamma')^{-1/2}\Gamma z_1$ have independent components and at least two rows of $(\Gamma\Gamma')^{-1/2}\Gamma$ have only non-zero entries (this can be relaxed even further), then, by the Darmois-Skitovich Theorem \citep[cf.][Theorem 5.3.1.]{Bryc95}, $z_1$ must already be Gaussian.}

Altogether, our design condition \ref{a.design} includes linear functions of both, product distributions with uniformly bounded $8$-th marginal moments and a large class of spherically symmetric distributions (cf. Lemma~\ref{lemma:spherical} and Lemma~\ref{lemma:product} in Appendix~\ref{sec:AppDiscAss}).

\medskip

The Assumption~\ref{a.error} on the error distribution extends the fourth moment condition (C2) in WC, which simply states that $\E[(\eps_{1,n}/\sigma_n)^4] = O(1)$, as $n\to\infty$.\footnote{In WC it is implicitly assumed that $\liminf_n\sigma_n^2>0$.} If the errors are independent of the design and $q_n\to\infty$ (as is the case in WC), then (C2) and \ref{a.error} are equivalent. However, Condition~\ref{a.error} is suited to also allow for some amount of dependence between the errors and the design. This dependence is ruled out in WC, because they use results of asymptotic normality of quadratic forms from \citet{Bhansali07} that apply only in the independence case (see Lemma~\ref{lemma:Bhansali} and the discussion at the beginning of Subsection~\ref{sec:normality}). We note that an $(8+\kappa)$-th moment condition like, e.g., $\sup_n\E[(\tilde{\eps}_1/\sigma_n)^{8+\kappa}]\le K$, together with $\max_i e_i = O_\P(1)$, is sufficient for \ref{a.error}, provided that $\liminf_n q_n/n>0$.

\medskip

The additional requirement of Theorem~\ref{thm:main}, that $\limsup_n p_n/n<1$ and $q_n\to\infty$, simply describes the regime of the relative number of parameters and hypotheses we are interested in. The corresponding assumption (C3) in WC and also parts~\ref{thm:R=I} and \ref{thm:A} of Theorem~\ref{thm:main} additionally require that $(p_n-q_n)/n\to 0$. This is a more serious restriction which is convenient in the present strategy of proof to show that the non-centrality term in the $F$-statistic under the local alternative degenerates to the correct value (cf. Section~\ref{sec:NCP2}). This, however, means that asymptotically we are only dealing with hypotheses where almost all of the $p$ parameters are restricted, since $q_n/p_n\to 1$ in this regime. It is therefore important to extend the analysis of the rejection probability of the $F$-test also to the regime where $q_n\ll p_n$ in order to asses the different contributions of the overall dimensionality and the multiplicity of hypothesis testing to the asymptotic rejection probability. This is what we do in Theorem~\ref{thm:main}\ref{thm:B} and in Section~\ref{sec:smallq} in the Gaussian design setting. The requirement that $q_n\to\infty$ as $n\to\infty$ is essential in Theorem~\ref{thm:main} in order to obtain a Gaussian limit. This assumption is dropped in Theorem~\ref{thm:qfixed} and Corollary~\ref{corr:NCFapprox}.

\medskip

Finally, the assumption in Theorem~\ref{thm:main} that $\Delta_\gamma = o(q_n/n)$, is rather natural in the case where $\liminf_n q_n/n>0$, where it simply reduces to $\Delta_\gamma = o(1)$. In this case, it says that we study the asymptotic rejection probability only in a shrinking neighborhood of the null hypothesis. If $\liminf_n q_n/n>0$, we also do not need to specify a rate at which $\Delta_\gamma$ approaches zero. Note, however, that part~\ref{thm:A} of Theorem~\ref{thm:main} actually does require a specific rate of contraction which is, again, only needed for technical reasons in establishing the asymptotic behavior of the non-centrality term (cf. Section~\ref{sec:NCP}). The corresponding Assumption~(C4) in WC is rather dubious and seems to arise from a miscalculation when dealing with said non-centrality term. In fact, they also need the $O(n^{-1/2})$ rate of $(R_0\gamma - r_0)' R_0 S R_0' (R_0\gamma - r_0)/\sigma_n^2$ imposed by our Theorem~\ref{thm:main}\ref{thm:A} and nothing more.\footnote{See the first display on page 146 in WC, where also the matrix $X_2X_2^{T}$ needs to be standardized. Also, there is a scaling factor of $\sqrt{n}$ missing in that argument, which is necessary to bring the non-centrality term to the same scale as the noise term.} 

In the case where $\liminf_n q_n/n=0$, we need the rate $\Delta_\gamma = o(q_n/n)$ in order to ensure that the mixed term in the expansion of the $F$-statistic is asymptotically negligible (cf. the discussion following display \eqref{eq:eta-conv}). Note that in the extreme case where $q_n=q$ is fixed, the assumption $\Delta_\gamma = o(q_n/n)$ appears to be somewhat restrictive because it rules out $n\Delta_\gamma \asymp 1$, as required for non-trivial local asymptotic power (cf. Remark~\ref{remark:detectionBoundary}). In the classical case where $q_n=q$ (and $p_n=p$) is fixed, the classical approach based on the asymptotic normality of the OLS estimator $\hat{\gamma}_n$ allows for a much wider range of local alternatives than our present strategy. Of course, the asymptotic normality of the whole vector $\hat{\gamma}_n$ breaks down if the dimensions $p_n$ and $q_n$ are too large relative to sample size $n$ (see, e.g., \citet{Portnoy86, Portnoy88}), which is why we here use a different strategy involving the assumption $\Delta_\gamma = o(q_n/n)$. Judging from the simulations of Section~\ref{sec:sim} below, it seems as if some bound on $\Delta_\gamma$ that is proportional to $q_n$ is essential for the validity of the normal approximation to the power function, because this approximation turns out to be accurate for a larger range of $\Delta_\gamma$ if $q$ gets larger.


\section{Numerical results}
\label{sec:sim}

In order to better understand the quality of the theoretical approximations to the power function of the $F$-test derived above, we conducted an extensive simulation study. We roughly follow the simulation setup of WC but our focus is more on the role of the number of tested hypotheses $q$. The linear model we considered was
$$
y_i \;=\; \beta'x_i \;+\; \eps_i,
$$
where the $\eps_i$ were i.i.d. with mean zero and variance one, independent of the design. We tried different error distributions but found little effect on the power function so we report only the results for $\mathcal N(0,1)$ errors and for $t(5)/\sqrt{5/3}$ errors.\footnote{Of course, one can always completely change the picture by, e.g., taking an error distribution that does not have a finite variance. In that case none of our theoretical approximations is of much use.} The $p$-dimensional design vectors $x_i$ were generated as i.i.d. realizations of a moving average process
$$
x_{ij} \;=\; \sum_{t=1}^T \alpha_t z_{i(j+t-1)},
$$
where $z_i = (z_{i1},\dots, z_{i(p+T-1)})'$ was either a $(p+T-1)$-dimensional standard normal vector or generated with i.i.d. $\Gamma(1,1) -1$ entries\footnote{The $\Gamma(1,1)$-distribution is just the standard exponential distribution, but we keep the $\Gamma$ notation in the plots to facilitate comparability with WC.} and $T=10$. The coefficients $\alpha_t$ were generated as i.i.d. uniform from $(0,1)$ only once and then held fixed across all the simulations to follow.

We tested only null hypotheses of the form $$H_0 : \beta_1 = \dots = \beta_q = 0,$$ at level $\nu=0.05$, so that the distribution of the $F$-statistic does neither depend on the mean of the design vectors nor on the intercept parameter, and hence it is no loss of generality that we have omitted both. The signal $\beta$ was generated such that half of the tested coefficients where equal to one and all the other coefficients were equal to zero. The signal was then scaled appropriately to produce a range of alternatives at which the power function was evaluated numerically.

Since WC have already extensively studied the effect of the dimensionality $p$ on the power of the $F$-test, we here focus more on the effect of the number of tested restrictions $q$. For our first set of Montecarlo experiments we fixed $n=100$ and $p=60$ and looked at the cases $q=4$ and $q=50$. Figure~\ref{fig:Power} shows the simulated power of the $F$-test (solid lines) as a function of the scaled distance from the null hypothesis $\Delta_\gamma$, where for each value of $\Delta_\gamma$, $10.000$ Montecarlo samples were generated. 
\begin{figure}
\includegraphics[width=\textwidth]{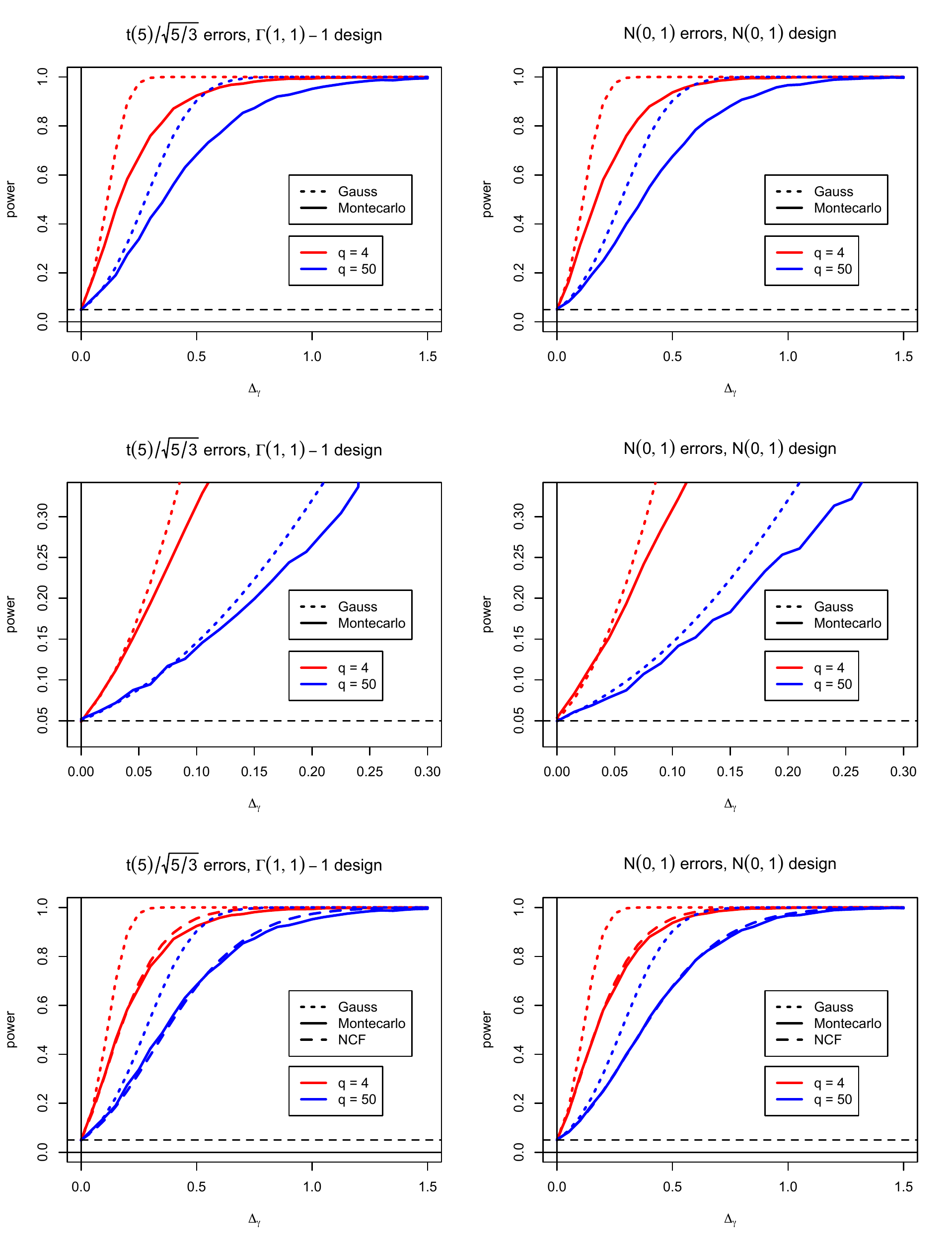} 
\caption{Simulated power functions of the $F$-test (Montecarlo) at level $\nu=0.05$, compared to the Gaussian approximation of Corollary~\ref{corr:main} (Gauss) and the non-central $F$ approximation of Corollary~\ref{corr:NCFapprox} (NCF).}
\label{fig:Power}
\end{figure}
The left column shows the results for $t(5)/\sqrt{5/3}$ distributed errors and a design that was generated from a moving average process with $\Gamma(1,1)-1$ innovations. The right column was generated with i.i.d. standard Gaussian errors and a Gaussian moving average design. As a first observation we note that the influence of non-Gaussianity on the simulated power (solid lines) is almost negligible at the present sample size of $n=100$. Now Figure~\ref{fig:Power} should be inspected from top to bottom. In the first row we clearly see the gain of power as the number of hypotheses $q$ decreases from $q=50$ to $q=4$. We also compare the simulated power to the Gaussian approximation from our asymptotic result of Theorem~\ref{thm:main} (dotted lines). The picture is qualitatively the same as in WC, who considered $q=p-2$ and who concluded ``that there is a good conformity between the empirical power and the theoretical power of the [\dots] $F$-test [\dots]'' \citep[][p. 142]{Wang13}. It seems hard to evaluate the quality of approximation directly from this picture in absolute terms. Moreover, the Gaussian approximation does not seem to become much more accurate when $q$ increases, contrary to what was suggested by Theorem~\ref{thm:main}. In fact, however, Theorem~\ref{thm:main} says that we should look at the Gaussian approximation only locally, for values of $\Delta_\gamma$ that are of a smaller order than $q/n$. Indeed, if we look at the power function in a smaller neighborhood around the null (cf. the second row of Figure~\ref{fig:Power}) we see a much better agreement between the simulated true power and the Gaussian approximation. We also see that when $q/n$ is larger, the approximation is accurate on a larger interval around the null, as predicted by the theory. However, a global Gaussian approximation to the power of the $F$-test seems to be too much to ask for. Finally, the last row of Figure~\ref{fig:Power} is identical to the first row, except that we have added the theoretical approximation based on the non-central $F$-distribution as in Corollary~\ref{corr:NCFapprox} (NCF). Compared to the Gaussian approximation, the non-central $F$ approximation is remarkably accurate over the entire range of alternatives considered, not just locally, and for both large and small values of $q$. 

To investigate also the quality of our approximations in small samples, we have repeated the simulations above with $n=30$ and $p=20$. We present only one instance of this second round of simulations in Figure~\ref{fig:Power2} to discuss the main differences to the case where $n=100$. We still find that the non-central $F$ approximation is much better than the Gaussian approximation, but clearly also the quality of the former deteriorates considerably compared to the case $n=100$. However, the local behavior of the power function is still picked up quite well even in the small sample scenario. Moreover, it is remarkable how similar the picture with $t(5)$-errors is to the picture with normal errors already for $n=30$. This suggests that the dominant reason for the imperfect approximation by the non-central $F$-distribution is the randomness of the design rather than the non-Gaussianity of the errors (cf. Remark~\ref{rem:FixedVsRandom}).
\begin{figure}
\includegraphics[width=\textwidth]{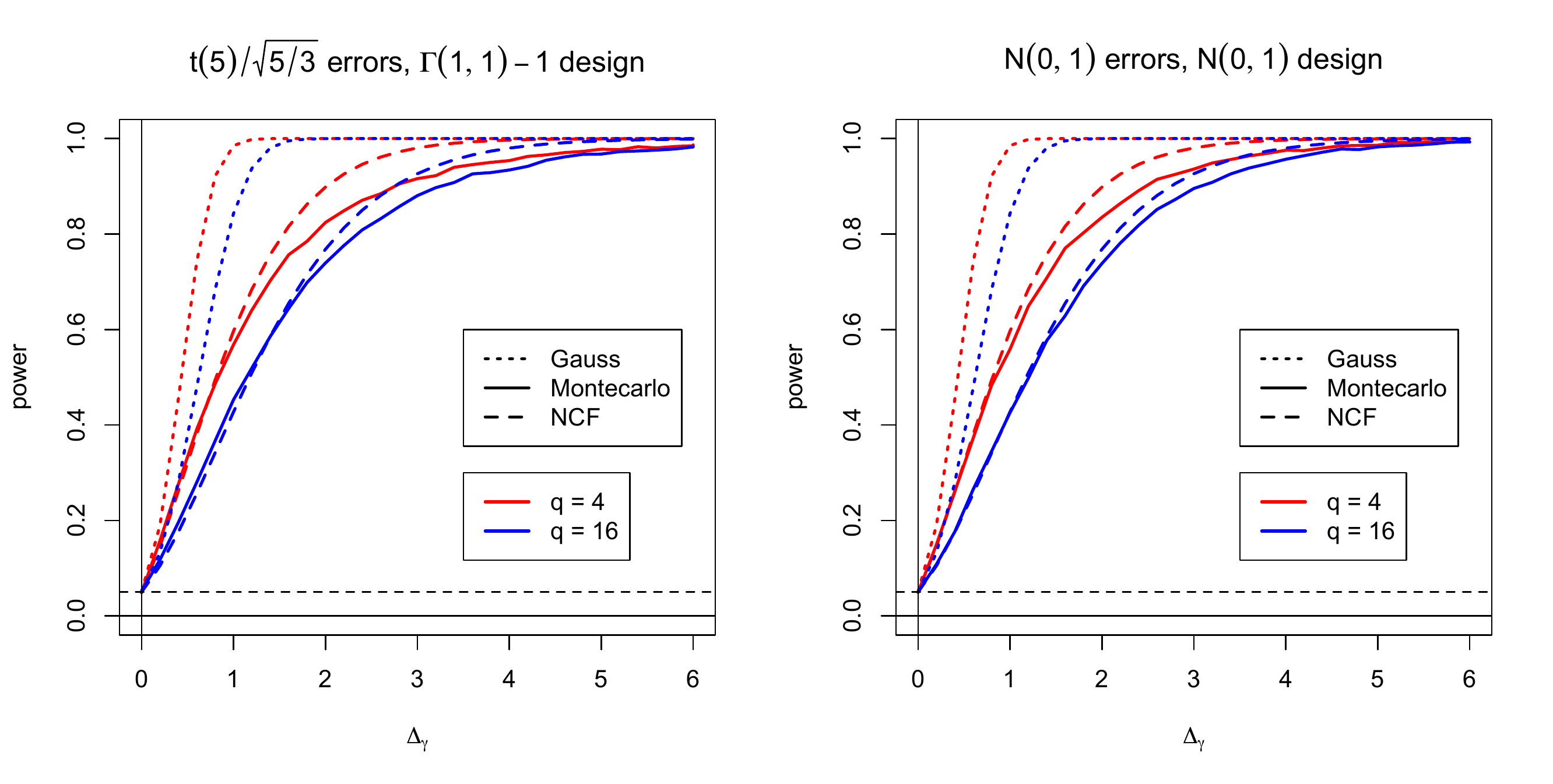} 
\caption{Simulated power functions of the $F$-test (Montecarlo) at level $\nu=0.05$,Gaussian approximation (Gauss) and the non-central $F$ approximation (NCF) for sample size $n=30$ and $p=20$ regressor variables.}
\label{fig:Power2}
\end{figure}

Finally, as in WC, we also investigate the distribution of the $F$-statistic under the null hypothesis $H_0 : \beta_1 = \dots = \beta_q = 0$. For different choices of $n$, $p$ and $q$, we have generated $10.000$ Montecarlo samples of $s_n^{-1/2}(F_n-1)$ as before, but with vanishing true signal $\beta=0$, and for $t(5)/\sqrt{5/3}$-errors and regressors generated from $\Gamma(1,1)-1$. To investigate also the impact of a non-symmetric error distribution we have repeated all the simulations also with the distribution of the errors and the design interchanged.\footnote{We do not report any results for Gaussian errors here because in that case the $F$-statistic follows exactly a central $F_{q,n-p-1}$-distribution under the null hypothesis, irrespective of the design (cf. Remark~\ref{remark:Gaussian}).} The plots in Figure~\ref{fig:Densities} were generated by applying the R-function `density' \citep{R} to the samples of standardized $F$-statistics with default settings. As before, the dashed lines correspond to the (appropriately scaled and centered) asymptotic non-central $F$-approximation of Corollary~\ref{corr:NCFapprox} and the black dotted line is the standard normal density. The overall picture is the same as for the power function. The non-central $F$ approximation is remarkably accurate even for moderate sample sizes like $n=50$. As predicted by the theory, for large $n$ and large $q$ the null distribution is well approximated by the normal, whereas for small $q$ the normal approximation fails. We also note that, again, the approximation accuracy appears to be rather insensitive to changes of the error and design distributions.

\begin{figure}
\includegraphics[width=\textwidth]{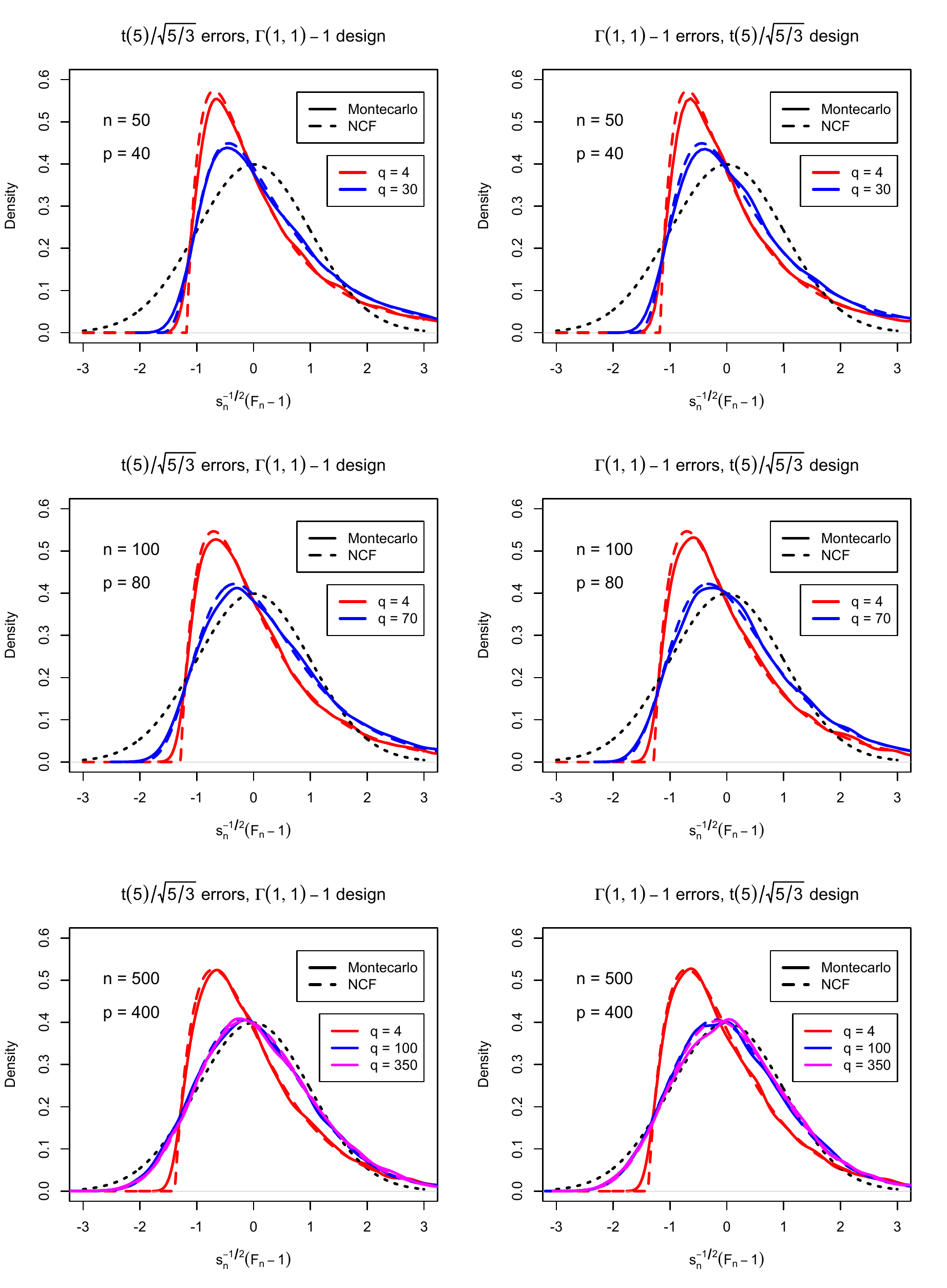} 
\caption{Kernel density estimates of simulated standardized $F$-statistics under the null hypothesis (solid lines), appropriately standardized non-central $F$-approximation of Corrolary~\ref{corr:NCFapprox} (dashed lines) and the standard normal density (black dotted lines) for different choices of $n$, $p$ and $q$, and different design and error distributions.}
\label{fig:Densities}
\end{figure}


\section{Proofs of main results}
\label{sec:proof}
 
In this section we give a high-level description of the proofs of both Theorem~\ref{thm:main} and Theorem~\ref{thm:qfixed}. The more technical parts of the argument are collected in the appendices. The following outline section pertains to both proofs and uses only assumptions that are invoked by both theorems. 
Note that because of compactness and, in particular, $\limsup_n p_n/n<1$, it is no restriction to assume that $p_n/n$ and $q_n/n$ are convergent sequences with limits $\rho_1\in[0,1)$ and $\rho_2\in[0,\rho_1]$, respectively.

 \subsection{Outline}
 \label{sec:outline}
 
In addition to the general model assumptions of Section~\ref{sec:main}, the present outline section~\ref{sec:outline} only uses the conditions $\Delta_\gamma = o(q_n/n)$, $\P(\det{U'U}=0)=0$ and $\sqrt{n}(\hat{\sigma}_n^2/\sigma_n^2-1) = O_\P(1)$. All of these are satisfied under the assumptions of any part of Theorem~\ref{thm:main} as well as under the assumptions of Theorem~\ref{thm:qfixed}, in view of Lemma~\ref{lemma:sigma}. 

The first part of Section~\ref{sec:outline} closely follows the classical approach for the decomposition of the $F$-statistic as described, e.g., in \citet[Chapter 3.7]{Rao95}. These arguments are kept to a minimum but are included nonetheless to make the notation more intelligible.

Recall the $F$-statistc $F_n$ as defined in \eqref{eq:Fstat}. For the following preliminary consideration, we work only on the event $C_n = \{\omega: \hat{\sigma}_n^2(\omega) >0, \det{U'U(\omega)} \ne 0\}$, where $\hat{\sigma}_n^2 = \|Y-U\hat{\gamma}_n\|^2/(n-p-1)$. On this event, $F_n$ is given by 
\begin{align}\label{eq:Fn1}
F_n \;=\; \frac{(R_0\hat{\gamma}_n - r_0)'(R_0(U'U)^{-1}R_0')^{-1}(R_0\hat{\gamma}_n-r_0)/q}{\sigma_n^2} \frac{\sigma_n^2}{\hat{\sigma}_n^2}.
\end{align}
Setting $\delta_\gamma = (R_0\gamma - r_0)/\sigma_n$, we have 
\begin{align*}
(R_0\hat{\gamma}_n - r_0)/\sigma_n = (R_0(U'U)^{-1}U'Y - r_0)/\sigma_n = R_0(U'U)^{-1}U'(\eps/\sigma_n) + \delta_\gamma,
\end{align*}
and thus, the first fraction in \eqref{eq:Fn1} reads
\begin{align*}
&(\eps/\sigma_n)'U(U'U)^{-1}R_0'(R_0(U'U)^{-1}R_0')^{-1}R_0(U'U)^{-1}U'(\eps/\sigma_n)/q \\
&+\; 2(\eps/\sigma_n)'U(U'U)^{-1}R_0'(R_0(U'U)^{-1}R_0')^{-1}\delta_\gamma/q \\
&+\; \delta_\gamma'(R_0(U'U)^{-1}R_0')^{-1}\delta_\gamma/q.
\end{align*}
Next, if $q<p+1$, choose a $(p+1-q)\times p$ matrix $R_1$, whose rows form an orthonormal basis for the orthogonal complement of the rows of  $R_0$. Recall that $R_0$ was chosen such that $R_0R_0' = I_q$. Hence, $T := [R_0',R_1']'$ is a $(p+1) \times (p+1)$ orthogonal matrix. Partitioning $U T' = [U_0,U_1]$, where $U_0 = U R_0'$ and $U_1 = U R_1'$, and using block matrix inversion, we see that
\begin{align*}
R_0(U'U)^{-1}R_0' &= 
[I_q,0]T(U'U)^{-1}T'[I_q,0]' \\
&= [I_q,0](TU'UT')^{-1}[I_q,0]' \\
&= (U_0'(I_n-P_{U_1})U_0)^{-1}.
\end{align*}
Similarly, we get
\begin{align*}
R_0(U'U)^{-1}U' &= 
[I_q,0]T(U'U)^{-1}T'TU' \\
&= (U_0'(I_n-P_{U_1})U_0)^{-1}U_0'(I_n-P_{U_1}).
\end{align*}
Now, by writing $W = (I_n-P_{U_1})U_0$, on $C_n$, we can simplify the $F$-statistic to read
\begin{align*}
&(\hat{\sigma}_n^2/\sigma_n^2) F_n = (\eps/\sigma_n)'P_W(\eps/\sigma_n)/q + 
2(\eps/\sigma_n)'W\delta_\gamma/q 
+ \delta_\gamma'W'W\delta_\gamma/q.
\end{align*}
The above representation remains correct also in the case where $q=p+1$ provided that the matrix $U_1$ is removed wherever it appears, i.e., $W=U_0$ in this case.
The correct centering and scaling of $F_n$ is $s_n^{-1/2}(F_n-1)$, for $s_n = 2(1/q + 1/(n-p-1))$ (cf. Lemma~\ref{lemma:NCFdist}). After noting that $P_W = P_U - P_{U_1}$ and abbreviating $M_n = (P_U - P_{U_1})/q - (I_n-P_U)/(n-p-1)$, we obtain
\begin{align}\label{eq:Fn-1a}
s_n^{-1/2}(F_n-1) \;&=\; s_n^{-1/2} (\eps/\sigma_n)'M_n(\eps/\sigma_n) \frac{\sigma_n^2}{\hat{\sigma}_n^2}\\
&\;\quad +\; s_n^{-1/2}\left( 
2(\eps/\sigma_n)'W\delta_\gamma/q
+ \delta_\gamma' W'W\delta_\gamma/q
\right) \frac{\sigma_n^2}{\hat{\sigma}_n^2},\label{eq:Fn-1b}
\end{align}
on the event $C_n$. Now, to get rid of the restriction to $C_n$, define $G_n$ by
\begin{align}\label{eq:Gn}
G_n = s_n^{-1/2} (\eps/\sigma_n)'M_n(\eps/\sigma_n) 
	+ 2s_n^{-1/2} (\eps/\sigma_n)'W\delta_\gamma/q + s_n^{-1/2} \delta_\gamma' W'W\delta_\gamma/q,
\end{align}
and note that this is well defined everywhere. It is now elementary to verify that we can study the asymptotic behavior of $G_n$ instead of $s_n^{-1/2}(F_n-1)$. Simply note that if $G_n - \eta_n^2$ converges weakly to some limiting distribution $\mathcal L$, for an appropriate centering sequence $\eta_n^2$ with $\eta_n^2=o(\sqrt{n})$, then, on $C_n$,
\begin{align}\label{eq:FnVSGn}
s_n^{-1/2}(F_n-1) - \eta_n^2 \;=\;
(G_n - \eta_n^2)(\sigma_n^2/\hat{\sigma}_n^2) +  \eta_n^2 (\sigma_n^2/\hat{\sigma}_n^2 - 1) 
\;\xrightarrow[n\to\infty]{w}\;\mathcal L,
\end{align}
and $\P(C_n)\to 1$ as $n\to\infty$, because $\P(\det{U'U}=0)=0$, $\sqrt{n}(\hat{\sigma}_n^2/\sigma_n^2 - 1) = O_\P(1)$ and $\P(\hat{\sigma}_n^2=0)\le \P(|\hat{\sigma}_n^2/\sigma_n^2 - 1| > 1/2) \to 0$, as claimed at the beginning of this section.

In what follows, we will establish that the first term on the right of the equal sign in \eqref{eq:Gn}, which we denote by $Q_n := s_n^{-1/2}\eps'M_n\eps/\sigma_n^2$, satisfies $Q_n \to \mathcal L$, weakly, for an appropriate limit distribution $\mathcal L$. The last summand in \eqref{eq:Gn} can be abbreviated to $s_n^{-1/2}n\nabla_n/q$, where $\nabla_n = \delta_\gamma'W'W\delta_\gamma/n = \delta_\gamma'(R_0(U'U/n)^{-1}R_0')^{-1}\delta_\gamma$ (as in Remark~\ref{remark:Gaussian}). It will play the role of a non-centrality term and it will be shown to be asymptotically non-random. Note that if we can also show $s_n^{-1/2}n\nabla_n/q = o_\P(\sqrt{q})$, then the mixed term in \eqref{eq:Gn} satisfies $s_n^{-1/2}(\eps/\sigma_n)'W\delta_\gamma/q = o_\P(1)$. Indeed, the conditional mean of the latter expression given $X$ is equal to zero, and its conditional variance is equal to $s_n^{-1}n\nabla_n/q^2 = (s_n^{-1/2}n\nabla_n/q)/ (\sqrt{q}\sqrt{s_n q}) = o_\P(1)$, provided that $s_n^{-1/2}n\nabla_n/q = o_\P(\sqrt{q})$.

Suppose, for now, that we have already established both, the weak convergence
\begin{equation}\label{eq:normal-conv}
Q_n \xrightarrow[n\to\infty]{w} \mathcal L,
\end{equation}
and also the fact that 
\begin{equation}\label{eq:eta-conv}
s_n^{-1/2}n\nabla_n/q - \sqrt{n}\Delta_\gamma b_n \quad \xrightarrow[n\to\infty]{i.p.}\quad 0,
\end{equation}
where $b_n$ is as in Theorem~\ref{thm:main}. Then, because $\Delta_\gamma = o(q/n)$, we have $\eta_n^2 := \sqrt{n}\Delta_\gamma b_n = o(\sqrt{q}) = o(\sqrt{n})$, as required for the argument in \eqref{eq:FnVSGn}. It also follows that $s_n^{-1/2}n\nabla_n/q = o(\sqrt{q}) + o_\P(1) = o_\P(\sqrt{q})$, so we have asymptotic negligibility of the mixed term in \eqref{eq:Gn} by the argument in the previous paragraph. Altogether, we arrive at 
$$G_n - \eta_n^2\; =\; Q_n  + o_\P(1) \;\xrightarrow[n\to\infty]{w} \;\mathcal L,$$
which establishes the conclusion of Theorem~\ref{thm:main} and Theorem~\ref{thm:qfixed}, for an appropriate choice of $\mathcal L$, provided that \eqref{eq:normal-conv} and \eqref{eq:eta-conv} hold. 

For Theorem~\ref{thm:main}, we will prove the weak convergence \eqref{eq:normal-conv} with $\mathcal L = \mathcal N(0,1)$, under the general Assumptions~\ref{a.design}.(\ref{a.factor},\ref{a.invertibility},\ref{a.Srivastava},\ref{a.moments1}) and \ref{a.error} in Section~\ref{sec:normality}, and the convergence in \eqref{eq:eta-conv} under each of the sets of assumptions of Theorem~\ref{thm:main}\ref{thm:R=I}, \ref{thm:main}\ref{thm:A} and \ref{thm:main}\ref{thm:B}, respectively (cf. Section~\ref{sec:NCP}).

For Theorem~\ref{thm:qfixed}, we will prove the  convergence \eqref{eq:normal-conv} with $\mathcal L = (2q)^{-1/2}\chi_q^2 - \sqrt{q/2}$ in Section~\ref{sec:chisquared}, and the convergence in \eqref{eq:eta-conv} is established by the same argument as in the case of Theorem~\ref{thm:main}\ref{thm:B} in Section~\ref{sec:NCP}, which does not require $q_n\to\infty$ nor Assumption~\ref{a.error}, so that it goes through also in the setting of Theorem~\ref{thm:qfixed}.


\subsection{Asymptotic normality of the noise term}
\label{sec:normality}

This section establishes the weak convergence in \eqref{eq:normal-conv} with $\mathcal L = \mathcal N(0,1)$. For this claim we only use the Assumptions~\ref{a.design}.(\ref{a.factor},\ref{a.invertibility},\ref{a.Srivastava},\ref{a.moments1}), \ref{a.error}, as well as $p_n/n\to \rho_1 \in[0,1)$, $q_n/n\to\rho_2\in[0,\rho_1]$ and $q_n\to\infty$. The following lemma is a variation of Theorem~2.1 in \citet{Bhansali07} on the asymptotic normality of quadratic forms for the case where the matrix and the enclosing vectors may exhibit a certain dependence between each other. Its proof is deferred to Appendix~\ref{sec:Appendixnormality}.

\begin{lemma}
\label{lemma:Bhansali}
Let $(\Omega, \mathcal F, \P)$ be the common probability space on which all the random quantities below are defined. For every $n\in\N$, let $\mathcal G_n\subseteq \mathcal F$ be a sub-sigma algebra, let $A_n = (a_{ij,n})_{i,j=1}^n$ be a real random symmetric $n\times n$ matrix that is $\mathcal G_n$ measurable and such that $A_n(\omega)\ne0$, $\forall \omega\in\Omega$. Let $Z_{1,n},\dots,Z_{n,n}$ be real random variables that are conditionally independent, given $\mathcal G_n$, and such that for $i\le n$, almost surely, $\E[Z_{i,n}|\mathcal G_n] = 0$, $\E[Z_{i,n}^2|\mathcal G_n] = 1$ and $\E[|Z_{i,n}|^4|\mathcal G_n] <\infty$. Moreover, assume that, as $n\to \infty$,
\begin{align*}
\frac{\|A_n\|_S^2}{\|A_n\|_F^2} \max_{j=1,\dots,n} \E[Z_{j,n}^4|\mathcal G_n]  &\quad\xrightarrow[n\to\infty]{i.p.} \quad0, \\
\frac{\max_j(A_n^2)_{jj}}{\|A_n\|_F^2} \left(\max_{j=1,\dots,n} \E[Z_{j,n}^4|\mathcal G_n]\right)^2&\quad\xrightarrow[n\to\infty]{i.p.} \quad 0, \\
 \text{and}\hspace{1cm} \sum_{j=1}^n\frac{a_{jj,n}^2\E[Z_{j,n}^4|\mathcal G_n]}{\|A_n\|_F^2} &\quad\xrightarrow[n\to\infty]{i.p.} \quad0.
\end{align*}
Then, for $Z_n = (Z_{1,n},\dots,Z_{n,n})'$, we have
\begin{align*}
\frac{Z_n'A_nZ_n - \E[Z_n'A_nZ_n|\mathcal G_n]}{\sqrt{2}\|A_n\|_F} \xrightarrow[n\to\infty]{w} \mathcal N(0,1).
\end{align*}
\end{lemma}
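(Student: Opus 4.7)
The plan is to combine conditioning with L\'evy's continuity theorem. For each $t\in\R$, set $T_n := (Z_n'A_nZ_n - \E[Z_n'A_nZ_n|\mathcal G_n])/(\sqrt 2\,\|A_n\|_F)$ and $\varphi_n(t) := \E[\exp(itT_n)\,|\,\mathcal G_n]$, which is $\mathcal G_n$-measurable and bounded by one in modulus. Since $\E[\exp(itT_n)] = \E[\varphi_n(t)]$, bounded convergence reduces the problem to showing $\varphi_n(t)\to e^{-t^2/2}$ in probability for every fixed $t$; L\'evy's theorem then yields the claim.

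To establish this in-probability convergence of the conditional characteristic functions, I would invoke the subsequence principle: it suffices to prove that every subsequence of $(n)$ admits a further subsequence $(n_k)$ along which $\varphi_{n_k}(t)\to e^{-t^2/2}$ almost surely. Given such a subsequence, a standard diagonal extraction produces a further subsequence along which all three of the displayed in-probability convergences in the hypothesis hold almost surely and simultaneously.

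On the resulting full-measure event I would fix $\omega$ and pass to the regular conditional distribution of $(Z_{1,n},\dots,Z_{n,n})$ given $\mathcal G_n$ evaluated at $\omega$, which exists since we may assume the underlying space to be Polish without loss of generality. Under this law the coordinates are independent with mean zero and unit variance, and the deterministic quantities $\E[Z_{j,n}^4|\mathcal G_n](\omega)$ together with the now-deterministic matrix $A_n(\omega)$ satisfy the three numerical hypotheses required by the unconditional version of Theorem~2.1 in \citet{Bhansali07}. Applying that theorem $\omega$-wise gives weak convergence of the conditional law of $T_{n_k}$ to $\mathcal N(0,1)$, equivalently the pointwise convergence $\varphi_{n_k}(t)(\omega)\to e^{-t^2/2}$, which closes the subsequence argument.

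The main obstacle is verifying that the result of \citet{Bhansali07} truly transfers $\omega$-wise, since it is stated for a deterministic matrix sequence acting on a single (possibly non-identically-distributed) independent array. Their proof, however, rests on the martingale-CLT decomposition
$$
Z_n'A_nZ_n - \E[Z_n'A_nZ_n] \;=\; 2\sum_{j<k} a_{jk,n}Z_{j,n}Z_{k,n} \;+\; \sum_j a_{jj,n}(Z_{j,n}^2 - 1),
$$
where the Lindeberg condition and conditional variance estimates are controlled precisely by the three combinations $\|A_n\|_S^2/\|A_n\|_F^2\cdot\max_j\E[Z_j^4]$, $\max_j(A_n^2)_{jj}/\|A_n\|_F^2\cdot(\max_j\E[Z_j^4])^2$, and $\sum_j a_{jj,n}^2\E[Z_j^4]/\|A_n\|_F^2$ that appear in our hypotheses. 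Hence their quantitative estimates carry over verbatim once the conditional distribution has been frozen, and no new probabilistic input is required beyond conditional independence and the three stated limits.
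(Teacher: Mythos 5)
Your outer reduction is sound and genuinely different from the paper's: conditioning on $\mathcal G_n$, passing to conditional characteristic functions, invoking bounded convergence and L\'evy's theorem, and then using the subsequence principle together with regular conditional distributions (which exist automatically since the $Z$'s are $\R^n$-valued, so no Polish-space assumption on $\Omega$ is needed) to freeze $\omega$ and reduce to a deterministic statement. The paper instead never freezes $\omega$: it runs the martingale CLT of Dvoretzky/Helland directly in the original space, using the filtration $\mathcal F_{n,0}=\mathcal G_n$, $\mathcal F_{n,j}=\sigma(\mathcal G_n, Z_{1,n},\dots,Z_{j,n})$, after splitting off the diagonal part $\sum_j a_{jj,n}(Z_{j,n}^2-1)$ exactly as in your displayed decomposition. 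Your wrapper is a legitimate alternative and arguably cleaner conceptually, since it converts the dependence between $A_n$ and $Z_n$ into a purely deterministic problem.

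The gap is in the inner step. Theorem~2.1 of \citet{Bhansali07} is stated for the first $n$ terms of a single i.i.d.\ sequence with a non-random matrix (the paper's remark following the lemma makes this explicit), not for a triangular array of independent, non-identically distributed variables whose fourth moments vary with $j$ and may grow with $n$. After freezing $\omega$ that is precisely the situation you are in: the conditional laws of $Z_{1,n},\dots,Z_{n,n}$ are a product measure but its factors differ, and $\max_j\E[Z_{j,n}^4|\mathcal G_n](\omega)$ need not be bounded along your subsequence. So the assertion that their ``quantitative estimates carry over verbatim'' is not an application of an existing theorem but a claim that the entire proof must be redone in this generality --- and that redoing \emph{is} the substance of the paper's argument: the verification of the conditional Lindeberg and variance conditions, including the combinatorial bound on $\trace{(\tilde A_n'\tilde A_n)^2}$ for the triangular truncation $\tilde A_n$ of $A_n$ and the appeal to Lemma~2.1 of \citet{Bhansali07} to control $\|\tilde A_n'\tilde A_n\|_F$ by $\|A_n\|_S\|A_n\|_F$. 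Your sketch correctly identifies which of the three hypotheses controls which estimate, so the outline is on the right track, but as written the proof's hard part is deferred to a citation that does not cover the case you need. (A minor related point, harmless for you: Bhansali et al.'s truncation device for dispensing with finite fourth moments also does not transfer, which is why the lemma assumes finite conditional fourth moments in the first place.)
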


\begin{remark}\normalfont
The proof of Lemma~\ref{lemma:Bhansali} essentially follows the rationale of \citet{Bhansali07} with the obvious modification that all the moments of $Z_{i,n}$ have to be replaced by conditional moments. Note that if the $Z_{1,n},\dots,Z_{n,n}$ are the first $n$ elements of a sequence of i.i.d. random variables and $A_n$ is non-random, as in \citet{Bhansali07}, then the assumptions of Lemma~\ref{lemma:Bhansali} reduce to those imposed by Theorem~2.1(iii) in that reference, except for the additional requirement that $\E[Z_1^4]<\infty$, as needed here. By the method of \citet{Bhansali07} we can not get rid of this additional requirement because their truncation argument does not apply in the case of dependence between $A_n$ and $Z_n$.
\end{remark}


With Lemma~\ref{lemma:Bhansali} at hand, we can proof the asymptotic normality of 
$$
Q_n \quad=\quad s_n^{-1/2}(\eps/\sigma_n)'M_n(\eps/\sigma_n).
$$ Under the linear model \eqref{eq:linmod}, the components $\eps_1,\dots,\eps_n$ of $\eps$ are conditionally independent given the design $X$, with $\E[\eps_i/\sigma_n|X] = 0$, $\E[(\eps_i/\sigma_n)^2|X] = 1$ and $\E[(\eps_i/\sigma_n)^4|X] < \infty$, almost surely, in view of Assumption~\ref{a.error}. Moreover, the random matrix $M_n = (P_U - P_{U_1})/q - (I_n-P_U)/(n-p-1)$ is $\sigma(X)$-measurable and satisfies $\trace{M_n} = 0$ and
\begin{align*}
\|M_n\|_F^2 &= \trace{M_n^2} = \trace{[(P_U - P_{U_1})/q^2 + (I_n-P_U)/(n-p-1)^2]} \\
&= 1/q + 1/(n-p-1),
\end{align*}
with probability one, in view of \ref{a.design}.(\ref{a.invertibility}). Also, $\|M_n\|_F^2\ge 1/(n-p-1)\ge 1/n$, everywhere, because $\trace{P_U}\le p+1$. With this and in view of $\E[\eps'M_n\eps|X]/\sigma_n^2 = \trace{M_n} = 0$, almost surely, we see that
\begin{align*}
Q_n = s_n^{-1/2}\frac{\eps'M_n\eps}{\sigma_n^2} = 
\frac{(\eps/\sigma_n)'M_n(\eps/\sigma_n) - \E[(\eps/\sigma_n)'M_n(\eps/\sigma_n)|X]}{\sqrt{2}\|M_n\|_F},
\end{align*}
at least on a set of probability one, and it remains to verify the convergence conditions of Lemma~\ref{lemma:Bhansali}. For the first one, note that $\|M_n\|_S^2 \le (1/q + 1/(n-p-1))^2$ and hence, almost surely,
\begin{align*}
\frac{\|M_n\|_S^2}{\|M_n\|_F^2} \max_{j=1,\dots,n} \E[(\eps_j/\sigma_n)^4|X]  
&\le
\left(1 + \frac{q}{n-p-1}\right) \frac{\max_{j=1,\dots,n} \E[(\eps_j/\sigma_n)^4|X] }{q}.
\end{align*}
But clearly $\max_j\E[(\eps_j/\sigma_n)^4|X]/q \le O_\P(1)\,\max_j\E[(\tilde{\eps}_j/\sigma_n)^4|x_j]/q = o_\P(q^{-1/2})$ under Assumption~\ref{a.error}. Therefore, the upper bound in the previous display converges to zero in probability. For the second condition, since the diagonal entries of a projection matrix are between $0$ and $1$, we see that $(M_n^2)_{jj} \le 1/q^2 + 1/(n-p-1)^2$, and thus
\begin{align*}
&\frac{\max_j(M_n^2)_{jj}}{\|M_n\|_F^2} \left(\max_{j=1,\dots,n} \E[(\eps_j/\sigma_n)^4|X]\right)^2 \\
&\quad\le
\frac{1 + q^2/(n-p-1)^2}{1 + q/(n-p-1)} (\max_i e_i)^8\frac{(\max_j \E[(\tilde{\eps}_j/\sigma_n)^4|x_j])^2}{q},
\end{align*}
which converges to zero in probability under Assumption~\ref{a.error} and $q_n\to\infty$. Establishing the validity of the last condition is slightly more involved. Since $\|M_n\|_F^2$ is of order $1/q$, we have to show that 
$$q\sum_{j=1}^n m_{jj}^2 \E[(\eps_j/\sigma_n)^4|X] \le (\max_i e_i)^4 q \sum_{j=1}^n m_{jj}^2 \E[(\tilde{\eps}_j/\sigma_n)^4|X] = o_\P(1),$$ 
where $M_n = (m_{ij})_{i,j=1}^n$. By Assumption~\ref{a.error}, $(\max_i e_i)^4=O_\P(1)$. Now, take expectation and use H\"older's inequality with $a,b>1$ such that $1/a + 1/b = 1$ to obtain 
\begin{align}
&\E\left(q\sum_{j=1}^n m_{jj}^2\E[(\tilde{\eps}_j/\sigma_n)^4|X]\right) \notag\\
&\quad\le 
\left(\E[(\E[(\tilde{\eps}_1/\sigma_n)^{4}|x_1])^b]\right)^{1/b}\; q\sum_{j=1}^n (\E[m_{jj}^{2a}])^{1/a}.\label{eq:checkCond3}
\end{align}
Now choose $b = 1+\kappa$ and invoke Assumption~\ref{a.error} to show that the $b$-th moment of the conditional expectation in \eqref{eq:checkCond3} is $O(1)$. To establish that 
\begin{align}\label{eq:lastCondition}
q\sum_{j=1}^n (\E[m_{jj}^{2a}])^{1/a} \quad\xrightarrow[n\to\infty]{i.p.}\quad 0,
\end{align}
we distinguish between the cases $\rho_2>0$ and $\rho_2=0$.

If $\rho_2>0$, we write the diagonal elements of $M_n = (P_U - P_{U_1})/q - (I_n - P_U)/(n-p-1)$ as
\begin{align*}
m_{jj} &= \frac{(P_U)_{jj} - (P_{U_1})_{jj}}{q} - \frac{1-(P_U)_{jj}}{n-p-1}\\
&= \frac{(P_U)_{jj}-\frac{p+1}{n} + \frac{p+1-q}{n} - (P_{U_1})_{jj}}{q}
+ \frac{(P_U)_{jj} - \frac{p+1}{n}}{n-p-1},
\end{align*}
and note that $(P_U)_{jj}-(p+1)/n \in[-1,1]$ and $(P_{U_1})_{jj}-(p+1-q)/n \in[-1,1]$, in order to get the bound
\begin{align*}
&\left( \E[m_{jj}^{2a}]\right)^{1/a}\\
&\quad= \frac{1}{n^2} \left( 
\E\left[
	\left\{
		\left((P_U)_{jj} - \frac{p+1}{n}\right)\left(\frac{n}{q} + \frac{n}{n-p-1}\right) \right.\right.\right.+\\
&\hspace{4cm} \left.\left.\left.		
		\left( \frac{p+1-q}{n} - (P_{U_1})_{jj}\right)\frac{n}{q}
	\right\}^{2a}
\right]
\right)^{1/a}\\
&\quad\le
\frac{1}{n^2}
\left(
2^{2a-1} \left\{
	\left( \frac{n}{q} + \frac{n}{n-p-1}\right)^{2a} \E\left[\left( (P_U)_{jj} - \frac{p+1}{n} \right)^{2a}\right] \right.\right.+\\ 
& \hspace{4cm}\left.\left.	
	\left(\frac{n}{q} \right)^{2a} \E\left[\left((P_{U_1})_{jj} - \frac{p+1-q}{n} \right)^{2a}\right]
\right\}
\right)^{1/a},
\end{align*}
where we have used the inequality $(c+d)^{2a} \le 2^{2a-1}(c^{2a}+d^{2a})$ for $c,d \in\R$.
Hence, if we can show that the $(P_U)_{jj}$, for $j=1,\dots,n$, and also the $(P_{U_1})_{jj}$, for $j=1,\dots, n$, are identically distributed, then 
\begin{align*}
&n\sum_{j=1}^n(\E[m_{jj}^{2a}])^{1/a} \quad\le\quad 2^{(2a-1)/a}O(1)\times  \\
&\quad\quad\times\left(\E\left[|(P_U)_{11}-(p+1)/n|^{2a}\right]+\E\left[|(P_{U_1})_{11}-(p+1-q)/n|^{2a}\right]\right)^{1/a},
\end{align*}
because $n/q\to 1/\rho_2$. Since $a=(1+\kappa)/\kappa$ is fixed, it then remains to show that $|(P_U)_{11}-(p+1)/n|\to0$ and $|(P_{U_1})_{11}-(p+1-q)/n|\to 0$, in probability. The desired properties of the diagonal entries of $P_U$ and $P_{U_1}$ are now established by the following lemma, which applies under the Assumptions~\ref{a.design}.(\ref{a.factor},\ref{a.invertibility},\ref{a.moments1}), and whose proof is deferred to Appendix~\ref{sec:Appendixnormality}.


\begin{lemma}\label{lemma:ProjDiag}
For every $n\in\N$, let $x_{1,n},\dots, x_{n,n}$ be i.i.d. random $p_n$-vectors that satisfy Assumptions~\ref{a.design}.(\ref{a.factor},\ref{a.invertibility}) with $\mu_n\in \R^{p_n}$ and positive definite covariance matrix $\Sigma_n$. Moreover, suppose that the random vector $z_{1,n}$ from Assumption~\ref{a.design}.(\ref{a.factor}) also satisfies $\Var[z_{1,n}'M_nz_{1,n}] = O(\trace{M_n^2}) + (\trace{M_n})^2o(1)$, as $n\to\infty$, for every symmetric $m_n\times m_n$ matrix $M_n$. Let $R_n$ be a non-random $(p_n+1)\times k_n$ matrix such that $\rank{R_n} = k_n \le p_n+1$ and define $X_n = [x_{1,n},\dots, x_{n,n}]'$ and $W_n = [\iota,X_n]R_n$, where $\iota = (1,\dots, 1)'\in\R^n$. Furthermore, let $h_{1,n},\dots, h_{n,n}$ denote the diagonal entries of the projection matrix $P_{W_n}$. Then, the $(h_{j,n})_{j=1}^n$ are exchangeable random variables and $$|h_{1,n} - k_n/n| \quad\xrightarrow[n\to\infty]{}\quad 0,$$ in probability.
\end{lemma}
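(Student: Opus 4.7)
For any $n\times n$ permutation matrix $\Pi$, the row-permuted matrix $\Pi U_n$ has the same joint distribution as $U_n$, because the rows $u_{i,n} = (1, x_{i,n}')'$ are i.i.d. Consequently $P_{\Pi W_n} = \Pi P_{W_n} \Pi'$ is distributed as $P_{W_n}$, and its diagonal entries are simply a permutation of $(h_{j,n})_{j=1}^n$, which establishes exchangeability. Furthermore, Assumption~\ref{a.design}.(\ref{a.invertibility}) implies that $W_n = [\iota, X_n]R_n$ has full column rank $k_n$ almost surely, so $\sum_{j} h_{j,n} = \trace{P_{W_n}} = k_n$ a.s. Combined with exchangeability this forces $\E[h_{1,n}] = k_n/n$ exactly, and the lemma reduces to showing $\Var[h_{1,n}] \to 0$.

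The strategy is a leave-one-out expansion via Sherman-Morrison. Let $U_{n,-1}$ be as in Assumption~\ref{a.design}.(\ref{a.invertibility}), i.e., $U_n$ with the first row $u_{1,n}'$ removed, and set $A = R_n' U_{n,-1}' U_{n,-1} R_n$, so that $R_n' U_n' U_n R_n = A + R_n' u_{1,n} u_{1,n}' R_n$. The full-rank hypothesis on $U_{n,-1}$ (precisely why the lemma needs the $n-1$ version of (A1)(b)) ensures $A$ is a.s.\ invertible, and Sherman-Morrison yields
\begin{equation*}
h_{1,n} \;=\; \frac{\xi}{1+\xi}, \qquad \xi \;=\; u_{1,n}' R_n A^{-1} R_n' u_{1,n},
\end{equation*}
where crucially $u_{1,n}$ is independent of $A$. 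Writing $u_{1,n} = m_n + G_n z_{1,n}$ with $m_n = (1,\mu_n')'$ and $G_n$ equal to $\Gamma_n$ stacked below a zero row, $\xi$ becomes, conditionally on $U_{n,-1}$, a quadratic form in $z_{1,n}$ plus a linear term plus a constant, with conditional mean $\E[\xi \mid U_{n,-1}] = \trace{A^{-1} R_n' S_n R_n}$, since $\E[u_{1,n}u_{1,n}'] = S_n$.

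Applying Assumption~\ref{a.design}.(\ref{a.moments1}) conditionally with $B = G_n' R_n A^{-1} R_n' G_n$ bounds $\Var[\xi \mid U_{n,-1}]$ by $O(\trace{B^2}) + (\trace{B})^2 o(1)$ plus a contribution from the linear part. Assumption~\ref{a.design}.(\ref{a.Srivastava}), which via \citet{Sriva13} controls the smallest eigenvalue of $(n-1)^{-1}U_{n,-1}' U_{n,-1}$ under $\limsup_n p_n/n < 1$, gives $\|A^{-1}\|_S = O_\P(n^{-1})$, and hence $\trace{B}$ and $\trace{B^2}$ are small in probability at the required rate. Because the map $t\mapsto t/(1+t)$ is $1$-Lipschitz on $[0,\infty)$, the conditional concentration of $\xi$ transfers to $h_{1,n}$, yielding $\E[\Var[h_{1,n}\mid U_{n,-1}]] \to 0$.

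The main obstacle is the remaining term in the decomposition $\Var[h_{1,n}] = \E[\Var[h_{1,n}\mid U_{n,-1}]] + \Var[\E[h_{1,n}\mid U_{n,-1}]]$, namely $\Var[\E[h_{1,n}\mid U_{n,-1}]]$. By the Lipschitz argument above this is controlled by $\Var[\trace{A^{-1} R_n' S_n R_n}]$, and in the regime $k_n/n \to \rho \in (0,1)$ the spectrum of $A/(n-1)$ does not concentrate around that of $R_n' S_n R_n$, so a naive law of large numbers is unavailable. To treat this I would perform a second Sherman-Morrison expansion, peeling off the rows of $U_{n,-1}$ one at a time to write $\trace{A^{-1} R_n' S_n R_n}$ as a telescoping sum of rank-one increments; this sum forms a martingale with respect to the filtration generated by the peeling, and each squared increment can be bounded using Assumptions~\ref{a.design}.(\ref{a.Srivastava}) and \ref{a.design}.(\ref{a.moments1}). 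Summing the squared conditional increments shows $\Var[\E[h_{1,n}\mid U_{n,-1}]] \to 0$, which combined with the exact identity $\E[h_{1,n}] = k_n/n$ yields the claim.
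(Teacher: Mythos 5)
You begin exactly as the paper does: exchangeability from the i.i.d.\ rows, $\E[h_{1,n}]=k_n/n$ from $\trace{P_{W_n}}=k_n$, and the Sherman--Morrison leave-one-out identity $h_{1,n}=\xi/(1+\xi)$ with $\xi=\tilde w_1'S_1^{-1}\tilde w_1$ and $S_1$ the leave-one-out Gram matrix. The divergence, and the gap, is in how you propose to control $S_1^{-1}$. Your argument hinges on $\|A^{-1}\|_S=O_\P(n^{-1})$, i.e.\ on the smallest eigenvalue of $U_{n,-1}'U_{n,-1}/(n-1)$ (after standardization) being bounded away from zero in probability. This is not available here. First, Assumption~\ref{a.design}.(\ref{a.Srivastava}) is not among the hypotheses of the lemma --- only \ref{a.design}.(\ref{a.factor},\ref{a.invertibility}) and the variance condition on quadratic forms are assumed. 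Second, even where \ref{a.design}.(\ref{a.Srivastava}) is invoked elsewhere in the paper, the Srivastava--Vershynin result yields $\|\hat\Sigma-I\|_S=o_\P(1)$ only when $k_n/n\to0$ and merely $O_\P(1)$ when $k_n=O(n)$; it gives no lower bound on $\lmin{\cdot}$ bounded away from zero in the regime $k_n/n\to\rho\in(0,1)$. Third, the lemma does not assume $\limsup_n p_n/n<1$: it must cover $k_n/n\to1$, where any uniform lower eigenvalue bound necessarily fails. Avoiding precisely this kind of almost-sure eigenvalue lower bound (which WC impose and which rules out Gaussian design) is the stated purpose of this lemma, so an argument that reintroduces it cannot be the intended one. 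Your final step --- the martingale/telescoping treatment of $\Var[\E[h_{1,n}\mid U_{n,-1}]]$ --- runs into the same obstruction, since each increment again involves $A^{-1}$, and it is left too sketchy to assess.

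The paper's route around this is worth internalizing: it never inverts $S_1$ directly. It introduces the regularized resolvent $J_\alpha=(S_1+(n-1)\alpha I_{k_n})^{-1}$ and $\Psi_n(\alpha)=\tilde w_1'J_\alpha\tilde w_1/(1+\tilde w_1'J_\alpha\tilde w_1)$, which is non-increasing in $\alpha$ with $\Psi_n(0)=h_{1,n}$. For each fixed $\alpha>0$ one has $\|J_\alpha\|_S\le[\alpha(n-1)]^{-1}$ for free, and $\trace{J_\alpha}$ converges almost surely via the Mar\v{c}enko--Pastur law (which needs only the quadratic-form variance condition, not any eigenvalue lower bound), giving $\Psi_n(\alpha)\to\Psi(\alpha)$ in probability with $\Psi(\alpha)\to t$ as $\alpha\downarrow0$. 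The conclusion then follows not from a variance decomposition but from the one-sided sandwich $\Psi_n(\alpha)\le h_{1,n}\le1$ combined with the exact identity $\E[h_{1,n}]=k_n/n$, via Lemma~\ref{lemma:XnYn} (the asymptotic version of: a variable bounded below by $t$ with mean $t$ equals $t$). If you want to salvage your approach, you would need to replace the eigenvalue lower bound by some such regularization-plus-monotonicity device; as written, the proposal does not go through.
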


\begin{remark}\normalfont
Note that for $R_n = I_{p_n+1}$, the $h_{1},\dots,h_{n}$ in Lemma~\ref{lemma:ProjDiag} are the leverage values of the regression with design matrix $U = [\iota,X]$. 
\end{remark}


Altogether, we see that in the case $\rho_2>0$, \eqref{eq:lastCondition} holds and the weak convergence
$$
Q_n = s_n^{-1/2} \frac{\eps'M_n\eps}{\sigma_n^2} \xrightarrow[n\to\infty]{w} \mathcal N(0,1),
$$
follows, as required in \eqref{eq:normal-conv}.

To treat the case $\rho_2=0$, we recall from Section~\ref{sec:outline} that $P_U - P_{U_1} = P_W = U(U'U)^{-1}R_0'(R_0(U'U)^{-1}R_0')^{-1}R_0(U'U)^{-1}U'$, almost surely, by Assumption~\ref{a.design}.(\ref{a.invertibility}), and write the diagonal elements of $M_n = P_W/q - (I_n - P_U)/(n-p-1)$ as before as
\begin{align*}
m_{jj} &= \frac{(P_W)_{jj} -\frac{q}{n}}{q}
+ \frac{(P_U)_{jj} - \frac{p+1}{n}}{n-p-1}.
\end{align*}
Note that the $(P_W)_{jj} = (1,x_j')(U'U)^{-1}R_0'(R_0(U'U)^{-1}R_0')^{-1}R_0(U'U)^{-1}(1,x_j')'$, for $j=1,\dots, n$, are exchangeable random variables, because $x_1,\dots, x_n$ are i.i.d. and $U'U = \sum_{j=1}^n (1,x_j')'(1,x_j')$ is a function in $x_1,\dots, x_n$ that is invariant under permutations of its arguments.
Therefore,
\begin{align*}
q\sum_{j=1}^n (\E[m_{jj}^{2a}])^{1/a} 
\;&\le\;
q\sum_{j=1}^n \left( \E\left[ \left( \frac{|(P_W)_{jj} - \frac{q}{n}|}{q} + \frac{1}{n-p-1}\right)^{2a}\right] \right)^{1/a}\\
&=\;
\left( \E\left[ \left( \sqrt{\frac{n}{q}}\left|(P_W)_{11} - \frac{q}{n}\right|  + \frac{\sqrt{nq}}{n-p-1}\right)^{2a}\right] \right)^{1/a}.
\end{align*}
By boundedness of the diagonal entries of a projection matrix, it remains to show that $H_n := \sqrt{n/q}((P_W)_{11}-q/n) \to 0$, in probability, as $n\to\infty$. By exchangeability, and Assumption~\ref{a.design}.(\ref{a.invertibility}), it follows that $q = \trace{P_W} = \E[\trace{P_W}] = \sum_{j=1}^n\E[(P_W)_{jj}] = n\E[(P_W)_{11}]$, almost surely, and thus $\E[H_n] = 0$. Moreover, $H_n \ge -\sqrt{q/n}$, and for $\eps>0$, $0=\E[H_n] \ge \E[H_n\mathbf{1}_{\{H_n> \eps\}}] - \sqrt{q/n}$, which implies that for $n$ large (such that $\sqrt{q/n}<\eps$), $\eps\P(|H_n|>\eps) \le \eps\P(H_n>\eps) + \eps\P(H_n< - \eps) \le \E[H_n\mathbf{1}_{\{H_n>\eps\}}] + 0 \le \sqrt{q/n}$. This establishes the asymptotic normality required in \eqref{eq:normal-conv} also in the case $\rho_2=0$.


\subsection{Asymptotic $\chi^2$-distribution of the noise term}
\label{sec:chisquared}

In this section we show that under the assumptions of Theorem~\ref{thm:qfixed}, \eqref{eq:normal-conv} holds with $\mathcal L = (2q)^{-1/2}\chi_q^2 - \sqrt{q/2}$. First note that 
$$
Q_n \;=\; s_n^{-1/2}\frac{\eps'M_n\eps}{\sigma_n^2} \;=\; 
s_n^{-1/2} \frac{\eps'P_W\eps}{\sigma_n^2q} - s_n^{-1/2}\frac{\hat{\sigma}_n^2}{\sigma_n^2}.
$$
By Lemma~\ref{lemma:sigma}, we have $ s_n^{-1/2}\hat{\sigma}_n^2/\sigma_n^2\to \sqrt{q/2}$, in probability. Since we do not test the intercept parameter $\alpha$, we can write $R_0 = [0,T_0]$, for some $q\times p$ matrix $T_0$, and 
$$
R_1=\begin{pmatrix}1&0\cdots0\\ \begin{matrix} 0\\ \vdots\\0\end{matrix} &T_1 \end{pmatrix}
$$
for some $(p-q)\times p$ matrix $T_1$, such that $\bar{T} := [T_0',T_1']'$ is $p\times p$ orthogonal. If $q=p$, then $T_0=I_p$ and $R_1=(1,0,\dots, 0)$. With this notation, we get $U = [\iota,X]$, $U_0 = UR_0' = XT_0'$ and $U_1 = UR_1' = [\iota, XT_1']$. From this we see that the distribution of $W=(I_n-P_{U_1})U_0 = (I_n-P_{[\iota, (I_n-P_\iota)XT_1']})XT_0'$ does not depend on $\mu$, and without loss of generality we may assume that $\mu=0$. Moreover, by standard properties of orthogonal projections,
\begin{align*}
P_W \;=\; P_U - P_{U_1} \;&=\; P_X + P_{(I_n-P_X)\iota} - \left(P_{XT_1'} + P_{(I_n-P_{XT_1'})\iota}\right)\\
&=\; P_{X\bar{T}'} - P_{XT_1'}  \;+\;  P_{(I_n-P_X)\iota} -P_{(I_n-P_{XT_1'})\iota}\\
&=\; P_{(I_n-P_{XT_1'})XT_0'} \;+\;  P_{(I_n-P_X)\iota} -P_{(I_n-P_{XT_1'})\iota}.
\end{align*}
Now, abbreviate $A = (I_n-P_{XT_1'})XT_0'$ and $B = P_{(I_n-P_X)\iota} -P_{(I_n-P_{XT_1'})\iota}$, and note that $P_A$ is uniformly distributed on the Grassmann manifold of $n\times n$ projection matrices of rank $q$, because for an $n\times n$ orthogonal matrix $O$ we have $OX = (OX\Sigma^{-1/2}) \Sigma^{1/2} \thicksim X$, $OP_AO' =OA(A'O'OA)^{-1}A'O'$, and
\begin{align*}
OA \;&=\; (I_n- OXT_1'(T_1X'O'OXT_1')^{-1}T_1X'O')OXT_0' \\
&\thicksim\; (I_n- XT_1'(T_1X'XT_1')^{-1}T_1X')XT_0' \;=\; A.
\end{align*}
Moreover, $\trace{B} = 0$, almost surely, and thus $\E[\eps'B\eps/\sigma_n^2|X] = 0$ and by standard calculations using independence (cf. the proof of Lemma~\ref{lemma:sigma}) and the fact that $P_X = P_{X\bar{T}'} = P_{[XT_0',XT_1']}$, 
\begin{align*}
\Var[\eps'&B\eps/\sigma_n^2|X] = 2\trace{B^2} + \sum_{i=1}^n(\E[(\eps_i/\sigma_n)^4] - 3) B_{ii}^2 \\
&\le (2+\E[(\eps_1/\sigma_n)^4]) \trace{B^2} \\
&=\; O(1) \left(2 - 2\frac{\iota'(I_n-P_X)(I_n-P_{XT_1'})\iota \iota'(I_n-P_{XT_1'})(I_n-P_X)\iota}{\iota'(I_n-P_X)\iota\iota'(I_n-P_{XT_1'})\iota}\right)\\
&=\; O(1) \left(1- \frac{\iota'(I_n-P_{X\bar{T}'})\iota}{\iota'(I_n-P_{XT_1'})\iota}\right).
\end{align*}
In view of Lemma~\ref{lemma:NormalProjMat} with $\mu=0$ and using the first two moments of the $\chi^2$ distribution, we see that the expressions $\iota'(I_n-P_{X\bar{T}'})\iota/n$ and $\iota'(I_n-P_{XT_1'})\iota/n$ both converge to $1-\rho_1$, in probability, and thus the whole expression on the last line of the previous display converges to zero in probability. Altogether, we see that 
$$
Q_n \;=\; (s_nq^2)^{-1/2} \frac{\eps'P_A\eps}{\sigma_n^2} -\sqrt{q/2} + o_\P(1).
$$
Because $P_A$ is uniformly distributed on the Grassmann manifold, it can be stochastically represented as $P_A \thicksim CC'$, where $C$ is a random $n\times q$ matrix that is uniformly distributed on the Stiefel manifold of order $n\times q$, i.e., $V$ has orthonormal columns and its distribution is both left and right invariant under the action of the appropriate orthogonal group. Since $C$ and $\eps$ are independent, the so called Diaconis-Freedman effect as described in \citet[][Theorem 2.1]{DuembCZ12} entails that the conditional distribution of $C'\eps/\sigma_n$ given $C$, converges weakly in probability to a $q$-dimensional standard normal distribution because $\|\eps/\sigma_n\|^2/n \to 1$ in probability and $\eps'\bar{\eps}/(n\sigma_n^2) \to 0$ in probability, where $\bar{\eps}$ is an independent copy of $\eps$. Weak convergence in probability implies convergence of the conditional characteristic functions of $C'\eps/\sigma_n$ given $C$, in probability, which, by boundedness, implies convergence of the unconditional characteristic functions. Consequently, we obtain the weak convergence 
$$
Q_n \;\thicksim\; (s_nq^2)^{-1/2} \|C'\eps/\sigma_n\|^2 -\sqrt{q/2} + o_\P(1) \;\xrightarrow[n\to\infty]{w} \; (2q)^{-1/2}\chi_q^2 - \sqrt{q/2}.
$$
This establishes \eqref{eq:normal-conv} with $\mathcal L$ as claimed.


\subsection{Asymptotic behavior of the non-centrality term}
\label{sec:NCP}

Finally, we have to establish the convergence in \eqref{eq:eta-conv} in the three cases of Theorem~\ref{thm:main}\ref{thm:R=I}, \ref{thm:main}\ref{thm:A} and \ref{thm:main}\ref{thm:B} as well as under the assumptions of Theorem~\ref{thm:qfixed}. We begin by a representation of $s_n^{-1/2}n\nabla_n/q$ that pertains to all of these cases. In this preliminary consideration we only use Assumpitons~\ref{a.design}.(\ref{a.factor}) and (\ref{a.invertibility}) which are assumed to hold in each of the cases under investigation. Recall the conventions and definitions of Section~\ref{sec:outline}, in particular, $U = [\iota, X]$, $T=[R_0',R_1']'$, $U_0 = UR_0'$, $U_1 = UR_1'$, $W=(I_n - P_{U_1})U_0$, $\delta_\gamma=(R_0\gamma-r_0)/\sigma_n$, $\Delta_\gamma = \delta_\gamma'(R_0S^{-1}R_0')^{-1}\delta_\gamma$, and $\nabla_n = \delta_\gamma'W'W\delta_\gamma/n = \delta_\gamma'(R_0(U'U/n)^{-1}R_0')^{-1}\delta_\gamma$. Write $\hat{\mu}_n = \sum_{i=1}^n x_i/n = X'\iota/n$ and $\hat{\Sigma}_n = X'X/n - \hat{\mu}_n\hat{\mu}_n' = X'(I_n-P_\iota)X/n$ and partition the $(p+1)\times (p+1)$ orthogonal matrix $T$ as
$$
T = \begin{pmatrix}R_0\\ R_1 \end{pmatrix} = \begin{pmatrix}t_0 &T_0\\ t_1 &T_1 \end{pmatrix},
$$
where $t_0\in\R^{q}$ and $T_0\in\R^{q\times p}$.
Since
$$
(U'U/n)^{-1} = \begin{pmatrix} 1 & \hat{\mu}_n'\\ \hat{\mu}_n & \hat{\Sigma}_n+\hat{\mu}_n\hat{\mu}_n'\end{pmatrix}^{-1} = \begin{pmatrix} 1+\hat{\mu}_n'\hat{\Sigma}_n^{-1}\hat{\mu}_n & -\hat{\mu}_n'\hat{\Sigma}_n^{-1}\\ -\hat{\Sigma}_n^{-1}\hat{\mu}_n &\hat{\Sigma}_n^{-1}\end{pmatrix},
$$
almost surely, by \ref{a.design}.(\ref{a.invertibility}), we have
\begin{align}
s_n^{-1/2}n\nabla_n/q & = \begin{cases}
\sqrt{\frac{n}{q}}(s_n q)^{-1/2} \sqrt{n}\delta_\gamma'(U_0'U_0/n)\delta_\gamma, 	&\text{if } q = p+1,\\
\sqrt{\frac{n}{q}}(s_n q)^{-1/2} \sqrt{n}\delta_\gamma'U_0'(I_n - P_{U_1})U_0\delta_\gamma/n, &\text{if } q\le p, \label{eq:Cases}\\
\sqrt{\frac{n}{q}}(s_n q)^{-1/2} \sqrt{n}\delta_\gamma'(T_0\hat{\Sigma}_n^{-1}T_0')^{-1}\delta_\gamma, 	&\text{if } t_0=0.
\end{cases}
\end{align}
Notice that the last two cases are not mutually exclusive, but the case $t_0=0$ is a sub-case of the case $q\le p$. The representation of $\nabla_n$ in the case $t_0=0$ will come in handy.
With the notation
\begin{align*}
&\Omega = \begin{bmatrix}
\Omega_{00} & \Omega_{01}\\
\Omega_{10} & \Omega_{11}
\end{bmatrix}
=
\begin{bmatrix}
R_0S R_0' & R_0S R_1'\\
R_1S R_0' & R_1S R_1'
\end{bmatrix}
= T S T', \quad\text{where}\\
&S = \begin{pmatrix} 1 &\mu'\\\mu &\Sigma+\mu\mu'\end{pmatrix} = \E\left[\begin{pmatrix} 1\\ x_1\end{pmatrix}\begin{pmatrix} 1 &x_1'\end{pmatrix}\right] = \E[U'U/n],
\end{align*}
under \ref{a.design}.(\ref{a.factor}), and by the simple block matrix inversion argument $R_0S^{-1}R_0' = [I_q,0](TST')^{-1}[I_q,0]' = (\Omega_{00} - \Omega_{01}\Omega_{11}^{-1}\Omega_{10})^{-1}$, involving the orthogonality of $T$, we analogously get
\begin{equation}\label{eq:CasesDelta}
\Delta_\gamma = \begin{cases}
 \delta_\gamma'\Omega_{00}\delta_\gamma, 				&\text{if } q= p+1,\\
 \delta_\gamma'(\Omega_{00} - \Omega_{01}\Omega_{11}^{-1}\Omega_{10})\delta_\gamma, 	&\text{if } q\le p,\\
 \delta_\gamma'(T_0\Sigma^{-1}T_0')^{-1}\delta_\gamma,		&\text{if } t_0=0.
\end{cases}
\end{equation}

\subsubsection{The case of Theorem~\ref{thm:main}\ref{thm:R=I}}
\label{sec:NCP1}

Under the assumptions of Theorem~\ref{thm:main}\ref{thm:R=I}, choose $b_n$ as in the theorem, which can be written as $b_n= \sqrt{(1-(p+1)/n)(1-(p+1)/n+q/n)/(2q/n)} = \sqrt{n/q} (s_nq)^{-1/2}(n-p-1+q)/n$. 
To establish the convergence in \eqref{eq:eta-conv} in the case of $q=p+1$, first note that now $b_n = \sqrt{n/q} (s_nq)^{-1/2}$, and consider
\begin{align}\label{eq:U0U0}
s_n^{-1/2}n\nabla_n/q - \sqrt{n}\Delta_\gamma b_n \quad=\quad b_n \sqrt{n} \delta_\gamma'\left(U_0'U_0/n - \Omega_{00}\right)\delta_\gamma,
\end{align}
which has mean zero. For the variance, we observe that $\Var[\sqrt{n} \delta_\gamma'(U_0'U_0/n)\delta_\gamma] = n^{-1}\sum_{i=1}^n \Var[\delta_\gamma'R_0(1,x_i')'(1,x_i')R_0'\delta_\gamma] \le \E[|\delta_\gamma'R_0(1,x_1')'|^4] = O(|\delta_\gamma' \Omega_{00} \delta_\gamma|^2) = O(\Delta_\gamma^2)$, in view of Lemma~\ref{lemma:moments}\ref{l:mom:lin} and Assumption~\ref{a.design}.(\ref{a.moments1}), and $b_n\Delta_\gamma = O(1) \sqrt{n/q}\Delta_\gamma = o(1)$, by assumption. This clearly covers also the case $R_0 = I_{p+1}$. If $R_0 = [0,I_p]$, then $t_0=0$, $T_0=I_p$, $q=p$ and the difference in \eqref{eq:eta-conv} reads
\begin{align*}
s_n^{-1/2}n\nabla_n/q - \sqrt{n}\Delta_\gamma b_n \quad=\quad \sqrt{\frac{n}{q}}(s_nq)^{-1/2} \sqrt{n} \delta_\gamma'\left(\hat{\Sigma}_n-\Sigma\frac{n-1}{n}\right)\delta_\gamma.
\end{align*}
The mean of this expression is, again, equal to zero. For its variance we find that $\Var[ \sqrt{n} \delta_\gamma'\hat{\Sigma}_n\delta_\gamma] = O(|\delta_\gamma'\Sigma\delta_\gamma|^2)$, in view of Lemma~\ref{lemma:moments}\ref{l:mom:cov} together with Assumption~\ref{a.design}.(\ref{a.moments1}), and $\sqrt{n/q}(s_nq)^{-1/2}\delta_\gamma'\Sigma\delta_\gamma = O(1)\sqrt{n/q}\Delta_\gamma\to 0$, by assumption. This finishes the proof of Theorem~\ref{thm:main}\ref{thm:R=I}.

\subsubsection{The case of Theorem~\ref{thm:main}\ref{thm:A}}
\label{sec:NCP2}

For part~\ref{thm:A} we only need to consider the case where $q\le p$, as the case $q = p+1$ has already been treated above (simply restrict to the subsequence $n'$ such that $q_{n'} \le p_{n'}$). We establish the convergence in \eqref{eq:eta-conv} for $b_n = \sqrt{n/q}(s_nq)^{-1/2}$ rather than $b_n$ as in the Theorem. It should be clear, however, that this is no restriction. [Indeed, if \eqref{eq:eta-conv} holds with $b_n=\sqrt{n/q}(s_nq)^{-1/2}$ and $\tilde{b}_n = b_n(n-p-1+q)/n$ is as in the theorem, then $\sqrt{n}\Delta_\gamma b_n - \sqrt{n}\Delta_\gamma\tilde{b}_n = (b_n-\tilde{b}_n)\sqrt{n}\Delta_\gamma = ((p+1)/n-q/n)b_n \sqrt{n} O(\sqrt{q}/n)$, by the additional assumption of Theorem~\ref{thm:main}\ref{thm:A}. Since in the present case $(p-q)/n\to 0$, the previous expression converges to zero as $n\to\infty$.] Now, since $q\le p$, the quantity of interest reads
\begin{align}
s_n^{-1/2}n&\nabla_n/q - \sqrt{n}\Delta_\gamma b_n \quad=\quad
b_n \sqrt{n} \delta_\gamma'\left(U_0'U_0/n - \Omega_{00} \right)\delta_\gamma\notag\\
&+ b_n\sqrt{n}\delta_\gamma'\left(\Omega_{01}\Omega_{11}^{-1}\Omega_{10} - \frac{U_0'U_1}{n}\left(\frac{U_1'U_1}{n}\right)^{-1}\frac{U_1'U_0}{n}\right)\delta_\gamma.\label{eq:partii}
\end{align}
The first term on the right-hand-side has already been studied in \eqref{eq:U0U0}, and the same argument applies, except that now $\delta_\gamma' \Omega_{00} \delta_\gamma \ne \Delta_\gamma$, in general. But $b_n\delta_\gamma' \Omega_{00} \delta_\gamma= O(1) \sqrt{n/q}\delta_\gamma' R_0S R_0' \delta_\gamma = O(n^{-1/2}) \to 0$ by the additional assumption of Theorem~\ref{thm:main}\ref{thm:A}. For the remaining term in \eqref{eq:partii}, as in WC, we begin by approximating $U_1'U_1/n$ by $\Omega_{11}$. This can only be successful because here we are dealing with a sample covariance matrix of dimension $p+1-q$, based on $n$ independent observations and we assume that $(p+1-q)/n \to 0$. We abbreviate $\tilde{U}_0 = U_0\Omega_{00}^{-1/2}$ and $\tilde{U}_1 = U_1\Omega_{11}^{-1/2}$ and consider the absolute difference
\begin{align}
&\left|
	\sqrt{n}\delta_\gamma'
	\left[ 
		\frac{U_0'U_1}{n}\left(\frac{U_1'U_1}{n}\right)^{-1}\frac{U_1'U_0}{n} - 
		\frac{U_0'U_1}{n}\Omega_{11}^{-1}\frac{U_1'U_0}{n}
	\right]\delta_\gamma
\right|\notag\\
&\quad=
\left|
	\sqrt{n}\delta_\gamma'\Omega_{00}^{1/2}
	\frac{\tilde{U}_0'\tilde{U}_1}{n}\left[ \left(\frac{\tilde{U}_1'\tilde{U}_1}{n}\right)^{-1} - I_{p+1-q}\right]\frac{\tilde{U}_1'\tilde{U}_0}{n} 
	\Omega_{00}^{1/2}\delta_\gamma
\right|\notag\\
&\quad\le
\left\| \left(\frac{\tilde{U}_1'\tilde{U}_1}{n}\right)^{-1} - I_{p+1-q}\right\|_S
\left\| \frac{\tilde{U}_1\tilde{U}_1'}{n} \right\|_S
\left\|\frac{\tilde{U}_0'\tilde{U}_0}{n}\right\|_S
\sqrt{n}\delta_\gamma'\Omega_{00}\delta_\gamma.\label{eq:Inverse}
\end{align}
Now, Lemma~\ref{lemma:moments}\ref{l:mom:consistency} with $k_n = p + 1 - q$ shows that $\|\tilde{U}_1'\tilde{U}_1/n - I_{p+1-q}\|_S \to 0$ in probability, since $k_n/n\to 0$, and it also establishes the boundedness in probability of $\| \tilde{U}_1\tilde{U}_1'/n \|_S
\|\tilde{U}_0'\tilde{U}_0/n\|_S$. The assumptions of this lemma are clearly satisfied under \ref{a.design}.(\ref{a.factor},\ref{a.Srivastava},\ref{a.moments1}). Now the convergence in spectral norm implies the convergence of the extreme eigenvalues of $\tilde{U}_1'\tilde{U}_1/n$ to $1$, and thus, also the extreme eigenvalues of the inverse converge to $1$, which means that $\| (\tilde{U}_1'\tilde{U}_1/n)^{-1} - I_{p+1-q}\|_S \to 0$, in probability. Since $b_n\sqrt{n}\delta_\gamma'\Omega_{00}\delta_\gamma = O(1)$, by the additional assumption of Theorem~\ref{thm:main}\ref{thm:A}, the upper bound in \eqref{eq:Inverse} converges to zero in probability. Thus, we have shown that we can replace $U_1'U_1/n$ in \eqref{eq:partii} by $\Omega_{11}$, without changing the limit.

To finish part~\ref{thm:A} it remains to show that $b_nB_n$ converges to zero in probability, where
\begin{equation*}
B_n := \sqrt{n}\delta_\gamma'\left[ 
\frac{U_0'U_1}{n}\Omega_{11}^{-1}\frac{U_1'U_0}{n} - \Omega_{01}\Omega_{11}^{-1}\Omega_{10}
\right]\delta_\gamma.
\end{equation*} 
To evaluate its expectation, write
\begin{align*}
B_n = \frac{1}{n^2} \sum_{i,j=1}^n\sqrt{n}\delta_\gamma'&\left[
R_0
(1, x_i')'(1, x_i')R_1'\Omega_{11}^{-1}R_1
(1, x_j')'(1, x_j')R_0'
- \Omega_{01}\Omega_{11}^{-1}\Omega_{10}
\right]\delta_\gamma.
\end{align*}
Since $\E[R_0(1, x_i')'(1, x_i')R_1'] = R_0SR_1' = \Omega_{01}$, all summands in $\E[B_n]$ with distinct indices $i\ne j$ disappear. Using parts~\ref{l:mom:lin} and \ref{l:mom:quad1} of Lemma~\ref{lemma:moments}, which apply in view of Assumptions~\ref{a.design}.(\ref{a.factor},\ref{a.moments1}), we arrive at
\begin{align*}
|\E[B_n]| &= 
\Big|\frac{1}{n} \sqrt{n}\delta_\gamma'\E[
R_0
(1, x_1')'(1, x_1')R_1'\Omega_{11}^{-1}R_1
(1, x_1')'(1, x_1')R_0']\delta_\gamma\\
&\hspace{2cm}
- \frac{1}{n} \sqrt{n}\delta_\gamma'\Omega_{01}\Omega_{11}^{-1}\Omega_{10}
\delta_\gamma\Big|\\
&\le
n^{-1/2}
\sqrt{\E[|\delta_\gamma'R_0(1,x_1')'|^4]\E[|(1, x_1')R_1'\Omega_{11}^{-1}R_1(1, x_1')'|^2]}\\
&\quad+
n^{-1/2}\delta_\gamma'\Omega_{01}\Omega_{11}^{-1}\Omega_{10}\delta_\gamma\\
&=
n^{-1/2}\sqrt{O(|\delta_\gamma'R_0SR_0'\delta_\gamma|^2)O((p+1-q)^2)} + n^{-1/2}\delta_\gamma'\Omega_{01}\Omega_{11}^{-1}\Omega_{10}\delta_\gamma\\
&=
O(\sqrt{n}\delta_\gamma'\Omega_{00}\delta_\gamma \,(p+1-q)/n) + n^{-1/2}\delta_\gamma'\Omega_{01}\Omega_{11}^{-1}\Omega_{10}\delta_\gamma.
\end{align*} 
After multiplying by $b_n$, the upper bound still converges to zero, because, first, $b_n\sqrt{n}\delta_\gamma'\Omega_{00}\delta_\gamma = O(1)$ and $(p+1-q)/n \to 0$, by the additional assumptions of Theorem~\ref{thm:main}\ref{thm:A}, and because $b_nn^{-1/2}\delta_\gamma'\Omega_{01}\Omega_{11}^{-1}\Omega_{10}\delta_\gamma \le b_nn^{-1/2}\delta_\gamma'\Omega_{00}\delta_\gamma = O(n^{-1})$, where the inequality holds in view of the second case in \eqref{eq:CasesDelta}.

In order to show that the distribution of $B_n$ also concentrates around its mean, we make use of the Efron-Stein inequality.\footnote{An explicit argument for the concentration aspect of $B_n$ is missing in WC.} We use the abbreviations $D = \delta_\gamma'\Omega_{01}\Omega_{11}^{-1}\Omega_{10}\delta_\gamma$, $L_i = \delta_\gamma'R_0(1, x_i')'$, $Q_{ij} = (1,x_i')R_1'\Omega_{11}^{-1}R_1(1, x_j')'$ and define the functions $g:\R^{p\times n}\to \R$ and $g_k:\R^{p\times (n-1)}\to \R$, for $k=1,\dots, n$, by $g(x_1,\dots,x_n) = n^{-{3/2}}\sum_{i,j=1}^n L_iQ_{ij}L_j -\sqrt{n}D = B_n$ and 
$$
g_k(x_1,\dots, x_{k-1},x_{k+1},\dots, x_n) = n^{-{3/2}}\sum_{\substack{i,j=1\\i\ne k, j\ne k}}^n L_iQ_{ij}L_j -\sqrt{n}D.
$$
By the Efron-Stein inequality \citep[Theorem 9]{Lugosi09},
\begin{align}\label{eq:Efron-Stein}
\Var[B_n] \le  \sum_{k=1}^n \E[(g(x_1,\dots, x_n) - g_k(x_1,\dots,x_{k-1},x_{k+1},\dots, x_n))^2].
\end{align}
Now, for $k\in\{1,\dots, n\}$, $g(x_1,\dots, x_n)$ can be expressed as
$$
n^{-3/2}\left[
		\sum_{\substack{i,j=1\\ i\ne k, j\ne k}}^n L_iQ_{ij}L_j
		\; +\; \sum_{\substack{i=1\\ i\ne k}}^n L_iQ_{ik}L_k \;+\; \sum_{\substack{i=1\\ i\ne k}}^n L_kQ_{ki}L_i
		\;+\; L_kQ_{kk}L_k
	\right] - \sqrt{n}D.
$$
Using the fact that $Q_{ij} = Q_{ij}' = Q_{ji}$, the differences $g-g_k$ in \eqref{eq:Efron-Stein}, are equal to
\begin{align}\label{eq:EfronDiff}
n^{-3/2}\left[
		2\sum_{\substack{i=1\\ i\ne k}}^n L_iQ_{ik}L_k \;+\; L_kQ_{kk}L_k
		\right].
\end{align}
We need to bound the expectation of the squared expression. To this end, we calculate the expectation of $L_iQ_{ik}L_kL_jQ_{jk}L_k$ for arbitrary indices $i,j,k$, as well as in the special case where $i\ne k$, $j\ne k$ and $i\ne j$. Observe that $Q_{ik}$ is the inner product of $\Omega_{11}^{-1/2}R_1(1,x_i')'$ and $\Omega_{11}^{-1/2}R_1(1,x_k')'$ and therefore, by Cauchy-Schwarz inequalities in both Euclidean and $L_p$ space, satisfies $\E[|Q_{ik}|^4]\le \E[\|\Omega_{11}^{-1/2}R_1(1,x_i')'\|^4\|\Omega_{11}^{-1/2}R_1(1,x_k')'\|^4] \le \E[\|\Omega_{11}^{-1/2}R_1(1,x_1')'\|^8] = \E[Q_{11}^4]$. Moreover, parts~\ref{l:mom:lin} and \ref{l:mom:quad2} of Lemma~\ref{lemma:moments}, whose assumptions are implied by the conditions~\ref{a.design}.(\ref{a.factor},\ref{a.moments1},\ref{a.moments2}), establish the facts $\E[L_1^4] = O(|\delta_\gamma'\Omega_{00}\delta_\gamma|^2)$, $\E[L_1^8] = O(|\delta_\gamma'\Omega_{00}\delta_\gamma|^4)$ and $\E[Q_{11}^4] = O(|p+1-q|^4)$, provided that at least one of the assumptions $(a)$ or $(b)$ of Lemma~\ref{lemma:moments}\ref{l:mom:quad2} holds. But this follows from Lemma~\ref{lemma:Tmatrix}, because if $t_0=0$, then from the representations of $\nabla_n$ and $\Delta_\gamma$ in \eqref{eq:Cases} and \eqref{eq:CasesDelta}, we see that the distribution of the quantity of interest does not depend on $\mu$ and we may restrict to $\mu=0$, whereas, if $t_0\ne 0$, Lemma~\ref{lemma:Tmatrix} shows that $T_1$ has full rank.\footnote{It should be noted that the case $t_0=0$ corresponds to a null-hypothesis that does not involve a restriction on the intercept parameter $\alpha$, i.e., $H_0:R_0\gamma = r_0$ can be expressed as $H_0: T_0\beta = r_0$, in this case.} With this, in general, we obtain
\begin{align*}
|\E&[L_iQ_{ik}L_kL_jQ_{jk}L_k]| = |\E[L_iL_jL_k^2Q_{ik}Q_{jk}]| 
\le 
\sqrt{\E[L_i^2L_j^2L_k^4] \E[Q_{ik}^2Q_{jk}^2]}\\
&\le 
(\E[L_1^8])^{1/2} (\E[Q_{ik}^4])^{1/4} (\E[Q_{jk}^4])^{1/4}
\le 
(\E[L_1^8])^{1/2} (\E[Q_{11}^4])^{1/2} \\
&= 
O(|\delta_\gamma'\Omega_{00}\delta_\gamma|^2) \,O(|p+1-q|^2)
= O(q|p+1-q|^2/n^2).
\end{align*}
If $i\ne k$, $j\ne k$ and $i\ne j$, using the abbreviations $v = R_0'\delta_\gamma$ and $M=R_1'\Omega_{11}^{-1}R_1$, we get the smaller bound
\begin{align*}
|\E&[L_iQ_{ik}L_kL_jQ_{jk}L_k]| = |\E[\E[L_iQ_{ik}|x_k,x_j]L_jQ_{jk}L_k^2]| \\
&= |\E[v'SM(1,x_k')'\E[L_jQ_{jk}|x_k]L_k^2]|
=
|\E[(v'SM(1,x_k')')^2L_k^2]|\\
&\le
\sqrt{\E[(v'SM(1,x_k')')^4]\E[L_k^4]}
=
\sqrt{O(|v'SMSMSv|^2)O(|\delta_\gamma'\Omega_{00}\delta_\gamma|^2)}\\
&=
O(\delta_\gamma'\Omega_{01}\Omega_{11}^{-1}\Omega_{10}\delta_\gamma) O(\delta_\gamma'\Omega_{00}\delta_\gamma)
\le
O(|\delta_\gamma'\Omega_{00}\delta_\gamma|^2) = O(q/n^2),
\end{align*}
where we have used Lemma~\ref{lemma:moments} and $\delta_\gamma'\Omega_{01}\Omega_{11}^{-1}\Omega_{10}\delta_\gamma\le \delta_\gamma'\Omega_{00}\delta_\gamma$ again. It is now easy to bound the expectations in \eqref{eq:Efron-Stein}. When squaring the expression in \eqref{eq:EfronDiff} we first note the leading factor $n^{-3}$. Next, we expand the square of the bracket term in \eqref{eq:EfronDiff} and take expectation. From the previous considerations we see that those summands in the resulting sum involving $L_kQ_{kk}L_k$ are of order $O(q|p+1-q|^2/n^2)$, and there are $O(n)$ of them. Together with the leading factor $n^{-3}$ and the summation in \eqref{eq:Efron-Stein} we arrive at a total contribution of $O(q|p+1-q|^2/n^3)$ from all those summands involving $L_kQ_{kk}L_k$. This expression has to be multiplied by $b_n^2 = O(n/q)$ to yield $O(|p+1-q|^2/n^2) = o(1)$. The remaining terms are of the form $|\E[L_iQ_{ik}L_kL_jQ_{jk}L_k]|$ with $i\ne k$ and $j\ne k$. Of those, there are a number of $O(n)$ summands where $i=j$, but they are again of order $O(q|p+1-q|^2/n^2)$ and therefore, as in the case before, their total contribution to \eqref{eq:Efron-Stein} is asymptotically negligible, even after multiplying by $b_n^2$. Finally, there is a number of $O(n^2)$ remaining summands as above, but with $i\ne k$, $j\ne k$ and $i\ne j$. Therefore, by the refined bound above, they are of order $O(q/n^2)$, so that their total contribution to the variance bound in \eqref{eq:Efron-Stein} is $O(q/n^2)$. Together with the factor $b_n^2$ we arrive at an additional term of order $O(1/n) = o(1)$. Hence, we see that the variance of $b_nB_n$ goes to zero as $n\to\infty$ and the proof of Theorem~\ref{thm:main}\ref{thm:A} is finished.

\subsubsection{The cases of Theorem~\ref{thm:main}\ref{thm:B} and Theorem~\ref{thm:qfixed}}

In this section we establish \eqref{eq:eta-conv} under the assumptions that the design vectors $x_1,\dots, x_n$ are Gaussian, $p_n/n\to\rho_1\in[0,1)$, $q_n/n\to\rho_2\in[0,\rho_1]$ (cf. the beginning of Section~\ref{sec:proof}) and $\Delta_\gamma = o(q_n/n)$.

We restrict to $q\le p$, because this is assumed in Theorem~\ref{thm:qfixed} and the case $q = p+1$ of Theorem~\ref{thm:main}\ref{thm:B} has already been treated above in Section~\ref{sec:NCP1}, in much higher generality. We use different arguments for the two cases $t_0=0$ and $t_0\ne 0$. If $t_0=0$, then, by \eqref{eq:Cases} and \eqref{eq:CasesDelta}, we see that the quantity of interest is given by
\begin{align*}
&s_n^{-1/2}n\nabla_n/q - \sqrt{n}\Delta_\gamma b_n \\
&\quad=
\sqrt{\frac{n}{q}} (s_nq)^{-1/2}\sqrt{n}
\left(\delta_\gamma'(T_0\hat{\Sigma}_n^{-1}T_0')^{-1}\delta_\gamma
-
\delta_\gamma'(T_0\Sigma^{-1}T_0')^{-1}\delta_\gamma \frac{n-(p+1)+q}{n}\right).
\end{align*}
By Lemma~\ref{lemma:invWish}, this expression has mean zero and variance equal to 
$$
\frac{n}{s_nq^2} (\delta_\gamma'(T_0\Sigma^{-1}T_0')^{-1}\delta_\gamma)^2 \frac{2(n-(p+1)+q)}{n}
= \frac{n}{q}\Delta_\gamma^2 O(1) = o(1).
$$
For the case $t_0\ne 0$ we recall $T= (R_0',R_1')'$ and introduce the matrix $\Sigma_T$ by
$$
\Sigma_T = \Var[T(1,x_1')'] = T\begin{pmatrix} 0 &0 \\ 0 &\Sigma \end{pmatrix}T' = 
\begin{pmatrix} T_0\Sigma T_0' &T_0\Sigma T_1'\\ T_1\Sigma T_0' &T_1\Sigma T_1'\end{pmatrix}
=\begin{pmatrix} \Sigma_{00} &\Sigma_{01}\\ \Sigma_{10} &\Sigma_{11}\end{pmatrix},
$$
and note that by Lemma~\ref{lemma:Tmatrix} the sub matrix $\Sigma_{11} = \Var[R_1(1,x_1')']$ of order $(p+1-q)$ is regular. In the present case the difference in \eqref{eq:eta-conv} is given by 
\begin{align}
&s_n^{-1/2}n\nabla_n/q - \sqrt{n}\Delta_\gamma b_n \notag\\
&\quad= 
(s_nq)^{-1/2}\sqrt{\frac{n}{q}}\times\label{eq:Gausst0NOT0}\\
&\quad\quad\quad\sqrt{n}\delta_\gamma'\left(
\frac{U_0'(I_n-P_{U_1})U_0}{n} - (\Omega_{00} - \Omega_{01}\Omega_{11}^{-1}\Omega_{10})\frac{n-(p+1)+q}{n}
\right)\delta_\gamma.\notag
\end{align}
The distribution of this quantity is slightly more complicated than that of the corresponding object in the case $t_0=0$, because now we have to deal with non-centrality issues due to $\mu\ne 0$. 
We take a closer look at the random part. The joint distribution of the first row of $U_0$ and the first row of $U_1$ is $T(1,x_1')' \thicksim \mathcal N(T(1,\mu')', \Sigma_T)$. Therefore, the conditional distribution of $R_0(1,x_1')'$ given $R_1(1,x_1')'$ is given by $\mathcal N(\tilde{\mu} + \Sigma_{01}\Sigma_{11}^{-1}R_1(1,x_1')', \Sigma_{00\cdot1})$, where $\tilde{\mu} = R_0(1,\mu')'-\Sigma_{01}\Sigma_{11}^{-1}R_1(1,\mu')'$ and $\Sigma_{00\cdot1}=\Sigma_{00}-\Sigma_{01}\Sigma_{11}^{-1}\Sigma_{10}$. Hence, conditional on $U_1$, the rows of $U_{0\cdot1} := U_0 - U_1\Sigma_{11}^{-1}\Sigma_{10}$ are i.i.d. $\mathcal N(\tilde{\mu},\Sigma_{00\cdot1})$ and since this distribution is free of $U_1$, this also means that $U_1$ and $U_{0\cdot1}$ are independent. Also, we clearly have $U_0'(I_n-P_{U_1})U_0 = U_{0\cdot1}'(I_n-P_{U_1})U_{0\cdot1}$. So if $V$ is a random $n\times q$ matrix independent of $U_1$, whose rows are i.i.d. $\mathcal N(0,\Sigma_{00\cdot1})$, then $U_0'(I_n-P_{U_1})U_0$ has the same distribution as $(V+\iota\tilde{\mu}')'(I_n-P_{U_1})(V+\iota\tilde{\mu}') = V'(I_n-P_{U_1})V + \tilde{\mu}\iota'(I_n-P_{U_1})V + V'(I_n-P_{U_1})\iota\tilde{\mu}' + \tilde{\mu}\tilde{\mu}'\iota'(I_n-P_{U_1})\iota$. For the Schur complement of $\Omega_{11}$ in $\Omega$ we use the representation given by Lemma~\ref{lemma:SchurComp}. Plugging this back into \eqref{eq:Gausst0NOT0} and removing the leading $(s_nq)^{-1/2}$ term that converges to a positive constant, it remains to study the limiting behavior of
\begin{align}
&\sqrt{n}\delta_\gamma'\left(\frac{V'(I_n-P_{U_1})V}{n} - \Sigma_{00\cdot1} \frac{n-(p+1)+q}{n}\right)\delta_\gamma\label{eq:Wishart}\\
&\quad+
2\sqrt{n}\delta_\gamma'\tilde{\mu}\frac{\iota'(I_n-P_{U_1})V\delta_\gamma}{n}\label{eq:MixedTerm}\\
&\quad+
\frac{(\delta_\gamma'\tilde{\mu})^2}{1+\nu}\sqrt{n}\left( \frac{\iota'(I_n-P_{U_1})\iota(1+\nu)}{n}\frac{n}{n-(p+1)+q} - 1\right)
\frac{n-(p+1)+q}{n},\label{eq:nonCentralChi}
\end{align}
multiplied by $\sqrt{n/q}$, where $\nu = (1,\mu')R_1'\Sigma_{11}^{-1}R_1(1,\mu')'$ is defined as in Lemma~\ref{lemma:SchurComp}. From that lemma we also see that $ \delta_\gamma'\Sigma_{00\cdot1}\delta_\gamma + (\delta_\gamma'\tilde{\mu})^2/(1+\nu) = \delta_\gamma'\Omega_{00\cdot1}\delta_\gamma = \Delta_\gamma =o(q/n)$. Since $\Sigma_{00\cdot1}$ is the Schur complement of the positive definite matrix $\Sigma_{11}$ within the positive semidefinite matrix $\Sigma_T$, it follows that $\Sigma_{00\cdot1}$ is itself positive semidefinite (consider the minimizer of the quadratic form $u\mapsto (v',u')'\Sigma_T(v',u')'$ in those variables $u\in\R^{p+1-q}$ corresponding to the block $\Sigma_{11}$). Consequently, $\delta_\gamma'\Sigma_{00\cdot1}\delta_\gamma\ge 0$ and both $\delta_\gamma'\Sigma_{00\cdot1}\delta_\gamma$ and $(\delta_\gamma'\tilde{\mu})^2/(1+\nu)$ are bounded by $\delta_\gamma'\Omega_{00\cdot1}\delta_\gamma = \Delta_\gamma = o(q/n)$. Now, we first show that the quantity in \eqref{eq:nonCentralChi} converges to zero in probability. By Lemma~\ref{lemma:NormalProjMat} with $\lambda_n = n\nu$ and $k=p+1-q$, we have
\begin{align*}
&\sqrt{n}\left(\frac{\iota'(I_n-P_{U_1})\iota(1+\nu)}{n}\frac{n}{n-(p+1)+q} - 1\right)\\
&\quad=
\sqrt{n}\left(\frac{\xi/(n-(p+1)+q)}{(\xi + \zeta)/(n(1+\nu))} - 1\right)\\
&\quad
= \frac{\sqrt{n}\left(\xi/(n-(p+1)+q) - 1\right) + \sqrt{n}\left( 1 - (\xi + \zeta)/(n(1+\nu))\right)}
		{(\xi + \zeta)/(n(1+\nu))},
\end{align*}
where $\xi\thicksim \chi_{n-(p+1)+q}^2$ independent of $\zeta\thicksim \chi_{p+1-q}^2(n\nu)$. The term in the denominator has mean $(n + n\nu)/(n(1+\nu)) = 1$ and variance $2(n + 2n\nu)/(n^2(1+\nu)^2) = O(1/n)$, by independence, and thus, converges to $1$ in probability. From the form of these moments we also conclude that the term $\sqrt{n}\left( 1 - (\xi + \zeta)/(n(1+\nu))\right)$ is $O_\P(1)$. Moreover, it is easy to see that also $\sqrt{n}\left(\xi/(n-(p+1)+q) - 1\right)$ is $O_\P(1)$, which entails that the entire expression in the previous display is of order $O_\P(1)$, and hence, the expression in \eqref{eq:nonCentralChi} converges to zero in probability even after multiplying by $\sqrt{n/q}$. Next, the mixed term in \eqref{eq:MixedTerm} is easily seen to have conditional distribution $\mathcal N(0, 4\delta_\gamma'\Sigma_{00\cdot1}\delta_\gamma\iota'(I_n-P_{U_1})\iota(\delta_\gamma'\tilde{\mu})^2/n)$ given $U_1$. By the previous considerations, the conditional variance is equal to 
\begin{align*}
4\delta_\gamma'\Sigma_{00\cdot1}\delta_\gamma\iota'(I_n-P_{U_1})\iota(\delta_\gamma'\tilde{\mu})^2/n 
\;=\; O(\Delta_\gamma^2) \frac{\iota'(I_n-P_{U_1})\iota(1+\nu)}{n} \;=\; O_\P(\Delta_\gamma^2),
\end{align*}
which converges to zero in probability, even after multiplying by $n/q$, and this implies convergence to zero of the scaled mixed term itself. Finally, for the expression in \eqref{eq:Wishart}, we note that conditional on $U_1$, $V'(I_n-P_{U_1})V$ has a Wishart distribution with scale matrix $\Sigma_{00\cdot1}$ and $n-(p+1)+q$ degrees of freedom. Therefore, \eqref{eq:Wishart} has mean zero and $\delta_\gamma'V'(I_n - P_{U_1})V\delta_\gamma \thicksim \delta_\gamma'\Sigma_{00\cdot1}\delta_\gamma\chi_{n-(p+1)+q}^2$ \citep[cf.][Theorem 3.4.2]{Mardia95}, which entails that the variance of \eqref{eq:Wishart}, multiplied by $n/q$, is $(n/q)(\delta_\gamma'\Sigma_{00\cdot1}\delta_\gamma)^2 2(n-(p+1)+q)/n = o(q/n)$, since $\delta_\gamma'\Sigma_{00\cdot1}\delta_\gamma\le \Delta_\gamma = o(q/n)$. This finishes the proofs of Theorem~\ref{thm:main} and Theorem~\ref{thm:qfixed}. \hfill{$\Box$}

\section*{Acknowledgements}

I want to thank Hannes Leeb and the three referees for many valuable suggestions that helped to considerably improve the paper. This research was supported by the Austrian Science Fund (FWF): [P 28233-N32].


\begin{appendix}

\section{Auxiliary results of Section~\ref{sec:DiscAss}}
\label{sec:AppDiscAss}

\begin{lemma}
\label{lemma:spherical}
For $m\in\N$, let $Z=(Z_1,\dots, Z_m)'$ be a spherically symmetric random vector in $\R^m$ such that $\E[ZZ'] = I_m$, let $V\thicksim \mathcal N(0,I_m)$ and let $z_1,\dots, z_n$ be i.i.d. copies of $Z$. For $p\le m$, let $\Gamma$ be a $p\times m$ matrix of full rank $p$, let $\mu\in\R^p$ and define $x_i = \Gamma z_i + \mu$.
\begin{enumerate}
	\renewcommand{\theenumi}{(\roman{enumi})}
	\renewcommand{\labelenumi}{{\theenumi}}
\item \label{lemma:sph-A} Fix $r\in\N$. If $\E[\|Z\|^{2r}]<\infty$ and for every choice of non-negative integers $\ell_j$ with $\sum_{j=1}^m \ell_j\le 2r$, we have $\E[\prod_{j=1}^m Z_j^{\ell_j}] = \prod_{j=1}^m \E[Z_j^{\ell_j}]$, then $\E[Z_1^{2l}] = \E[V_1^{2l}]$ and $\E[\|Z\|^{2l}] = \E[\|V\|^{2l}]$ for every $l=1,\dots, r$.\footnote{Due to symmetry, we always have $\E[Z_1^l]=0=\E[V_1^l]$ if $l$ is odd and the former moment exists.}

\item \label{lemma:sph-B} If $2\le p\le n-2$ and $Z$ also satisfies $\P(\|Z\|=0)=0$, then the random vectors $x_1,\dots, x_n$ satisfy Assumptions~\ref{a.design}.(\ref{a.factor},\ref{a.invertibility}). Moreover, if also $\E[\|Z\|^8]<\infty$, $\E[\|Z\|^4]/\E[\|V\|^4] \to 1$ and $\E[\|Z\|^8]/\E[\|V\|^8] = O(1)$, as $m\to\infty$, then also Assumptions~\ref{a.design}.(\ref{a.moments1},\ref{a.moments2}) hold. 

\item \label{lemma:sph-C} If $Z$ follows the uniform distribution on the ball (of appropriate radius $\sqrt{m+2}$, to ensure $\E[ZZ']=I_m$) and $2 \le p\le n-2$, then the $x_i$, for $i=1,\dots, n$, satisfy the full Assumption~\ref{a.design}, but not Assumption~\ref{c.design}.

\end{enumerate}
\end{lemma}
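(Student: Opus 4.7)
For part \ref{lemma:sph-A}, the plan is to write $Z = RU$ with $R = \|Z\|$ and $U$ uniform on the unit sphere of $\R^m$ independent of $R$, and similarly $V = \|V\|U$ for the Gaussian, using the same sphere-uniform direction. This immediately yields $\E[Z_1^{2l}]/\E[\|Z\|^{2l}] = \E[U_1^{2l}] = \E[V_1^{2l}]/\E[\|V\|^{2l}]$, so the two claimed equalities are equivalent and I need only establish one. I would induct on $l$: expanding $\|Z\|^{2l} = (Z_1^2+\cdots+Z_m^2)^l$ by the multinomial theorem, applying the factorization hypothesis on mixed moments, and invoking spherical symmetry of the marginal moments, gives $\E[\|Z\|^{2l}] = m\E[Z_1^{2l}] + P_l(\E[Z_1^2],\ldots,\E[Z_1^{2(l-1)}])$ with a polynomial $P_l$ that arises identically for $V$ by independence of its components. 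The induction hypothesis substitutes lower moments of $Z$ by those of $V$, so $\E[\|Z\|^{2l}] - m\E[Z_1^{2l}] = \E[\|V\|^{2l}] - m\E[V_1^{2l}]$, and combining this linear relation with the ratio identity above solves for $\E[Z_1^{2l}] = \E[V_1^{2l}]$ as soon as one notes $\E[\|V\|^{2l}] > m\E[V_1^{2l}]$ for $l \ge 2$ and $m \ge 2$ (elementary), while $l = 1$ is handled directly from $\E[ZZ'] = I_m$.

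For part \ref{lemma:sph-B}, I would verify the four conditions in turn. Condition (a) holds by construction with $\Sigma = \Gamma\Gamma'$. Condition (b) reduces to the fact that any nontrivial linear functional $v'Z$ has an absolutely continuous distribution on $\R$, which follows from $Z = RU$ with $\P(R=0) = 0$ together with the continuous density of $v'U$ for $m \ge 2$; an induction on the rows of $[\iota_{n-1}, X_{-1}]$ then forces each row to lie outside the span of its predecessors almost surely, because any vector orthogonal to the current row span must have a nonzero slope component (its intercept part alone cannot annihilate rows whose first entry is always $1$). For conditions (d) and (e), I would exploit the tensor invariance of spherically symmetric laws: the fourth-moment tensor must take the form $\E[Z_i Z_j Z_k Z_l] = c_4(\delta_{ij}\delta_{kl}+\delta_{ik}\delta_{jl}+\delta_{il}\delta_{jk})$ with $c_4 = \E[Z_1^4]/3$, giving $\Var[z_1'Mz_1] = (c_4-1)(\trace{M})^2 + 2c_4\trace{M^2}$; the hypothesis $\E[\|Z\|^4]/\E[\|V\|^4]\to 1$ combined with $\E[Z_1^4] = \E[R^4]\cdot 3/(m(m+2))$ forces $c_4 \to 1$, yielding (d). For the eighth-moment bound in (e), I would use $Z'PZ = R^2\|PU\|^2$ with $\|PU\|^2 \thicksim \mathrm{Beta}(r/2,(m-r)/2)$ independent of $R$; the Beta fourth-moment identity gives $\E[\|PU\|^8] = O((r/m)^4)$, which combined with $\E[R^8] = O(m^4)$ from $\E[\|Z\|^8]/\E[\|V\|^8] = O(1)$ yields $\E[(Z'PZ)^4] = O(r^4) = O(\|P\|_F^8)$, as required.

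For part \ref{lemma:sph-C}, direct integration against the radial density of uniform-on-ball gives $\E[\|Z\|^{2k}] = m(m+2)^k/(m+2k)$, from which $\E[\|Z\|^4]/\E[\|V\|^4] = (m+2)/(m+4)\to 1$ and $\E[\|Z\|^8]/\E[\|V\|^8] = (m+2)^3/[(m+4)(m+6)(m+8)]\to 1$, so part \ref{lemma:sph-B} already delivers conditions (a), (b), (d) and (e) of \ref{a.design}. The only remaining piece is (A1)(c): the almost-sure bound $\|PZ\|^2 \le \|Z\|^2 \le m+2$ makes the requirement vacuous once $t > m+2$, and in the intermediate range $C\,\mathrm{rank}(P) < t \le m+2$ I would use the decomposition $\|PZ\|^2 = \|Z\|^2 B$ with $B \thicksim \mathrm{Beta}(r/2,(m-r)/2)$ together with standard Beta concentration around its mean $r/m$ to produce the polynomial tail $\P(\|PZ\|^2 > t) \le C t^{-1-c}$. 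Finally, to establish the failure of (C1), I would exhibit the single mixed moment $\E[Z_1^2 Z_2^2] = \E[R^4]\E[U_1^2 U_2^2] = [m(m+2)^2/(m+4)]\cdot[1/(m(m+2))] = (m+2)/(m+4)$, which is strictly less than $\E[Z_1^2]\E[Z_2^2] = 1$ for every $m \ge 2$, contradicting the factorization of $\le 8$-th mixed moments with distinct indices required by (C1). The main obstacle I anticipate is arranging (A1)(c) with genuinely universal constants, since Srivastava's tail condition is more delicate than the moment bounds; everything else reduces to classical identities for the Beta distribution and standard invariants of spherically symmetric laws.
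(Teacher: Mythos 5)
Your overall strategy coincides with the paper's: everything rests on the polar factorization $Z=\|Z\|U$ with $U$ uniform on the sphere and independent of $\|Z\|$, so that every moment of $Z$ is the corresponding Gaussian moment times a power of the ratio $\E[\|Z\|^{2k}]/\E[\|V\|^{2k}]$. Within that frame you deviate in several sub-arguments, all of which are sound. For part~\ref{lemma:sph-A} the paper avoids your induction entirely: it picks the exponent pattern $\ell_j\in\{0,2\}$ so that the factorized product is identically $1$, which gives $\E[\|Z\|^{2l}]=\E[\|V\|^{2l}]$ in one line; your multinomial induction reaches the same place but needs the extra observation $m\,\E[U_1^{2l}]\ne 1$ to solve the resulting linear relation (fine for $m\ge 2$). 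For \ref{a.design}.(b) the paper rescales $\tilde x_i$ into a Gaussian matrix via $D_1D_2^{-1}\Lambda$ and then handles the intercept column by a separate conditioning step, whereas your row-by-row span argument is more elementary; just note that the phrase ``each row lies outside the span of its predecessors'' can only be literally true for the first $p+1$ rows of the $(n-1)\times(p+1)$ matrix, which is all you need for rank $p+1$. Your isotropic-tensor computation of $\Var[Z'MZ]$ and the Beta representation of $\|PU\|^2$ are algebraically equivalent to the paper's direct evaluation of $\E[(V'MV)^2]$ and the $\chi^2_{\rank P}$ moments; do remember that \ref{a.design}.(d,e) also require $\mathcal L_{4,n},\mathcal L_{8,n}=O(1)$, which drop out of the same identity $\E[|v'Z|^{2k}]=(\E[\|Z\|^{2k}]/\E[\|V\|^{2k}])\,\E[|v'V|^{2k}]$ but which you did not state. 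Your disproof of \ref{c.design} via the single moment $\E[Z_1^2Z_2^2]=(m+2)/(m+4)<1$ is more direct than the paper's appeal to part~\ref{lemma:sph-A}.

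The one place where your plan is genuinely incomplete is \ref{a.design}.(\ref{a.Srivastava}) in part~\ref{lemma:sph-C}, and you correctly identified it as the obstacle. The paper does not prove this condition either; it cites Section~1.4 of \citet{Sriva13}, where the strong regularity condition is verified for isotropic log-concave distributions (the uniform law on a ball being the canonical example). Your proposed direct route via Beta tails can be made to work, but a plain Markov bound with a fixed moment will not produce $\P(\|PZ\|^2>t)\le Ct^{-1-c}$ with constants uniform over all ranks $r$ and all $t>Cr$ — for $t$ of order $r$ you need a genuine large-deviation bound for $\mathrm{Beta}(r/2,(m-r)/2)$ away from its mean $r/m$, e.g.\ via the $\chi^2$ representation $B=\xi_r/(\xi_r+\xi_{m-r})$ and exponential concentration of both chi-squares. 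Either carry that out or, as the paper does, invoke the log-concavity result directly.
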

\begin{proof}
We make use of the well known fact that any spherical distribution can be represented as $Z = b \|Z\|$, where $b$ and $\|Z\|$ are independent, and $b$ is uniformly distributed on the unit $m$-sphere $\mathcal S^{m-1}$ \citep[cf.][]{Camb81}. 

For part~\ref{lemma:sph-A}, set $\ell = \sum_{j=1}^m\ell_j$ and let $e_i\in\R^m$ denote the $i$-th element of the standard basis in $\R^m$ and note that
\begin{align*}
\E\left[\prod_{j=1}^m Z_j^{\ell_j}\right] 
&= \E\left[\prod_{j=1}^m(e_j'Z)^{\ell_j}\right] 
= \E\left[\|Z\|^\ell \prod_{j=1}^m (e_j'b)^{\ell_j}\right]\\
&= \E\left[\|Z\|^\ell\right] \E\left[ \prod_{j=1}^m (e_j'b)^{\ell_j}\right].
\end{align*}
Of course, the same argument can be carried through for the spherical vector $V\thicksim \mathcal N(0,I_m)$, so that we have
\begin{equation}\label{eq:Eprod}
\frac{\E\left[\prod_{j=1}^m Z_j^{\ell_j}\right] }{\E\left[\prod_{j=1}^m V_j^{\ell_j}\right] } = \frac{\E\left[\|Z\|^\ell\right]}{\E\left[\|V\|^\ell\right]},
\end{equation}
provided that all the $\ell_j$ are even, so that $\E[\prod_{j=1}^m V_j^{\ell_j}] \ne 0$.
Now choose the $\ell_j$ to be either equal to $2$ or $0$, such that $\ell$ is any even number from $2$ to $2r$. Therefore, since $\E[Z_1^2] = 1 = \E[V_1^2]$ and by our factorization assumption, the left-hand-side of \eqref{eq:Eprod} is equal to one, so that we have established the equality of even moments of $\|Z\|$ and $\|V\|$. To see that also the even moments of $Z_1$ and $V_1$ coincide, simply choose $\ell_1 = \ell = 2l$, for some $l\in\{1,\dots,k\}$ and $\ell_j = 0$, if $j\ne1$. 

For part~\ref{lemma:sph-B}, to establish Assumption~\ref{a.design}.(\ref{a.invertibility}), first note that the column span of $U_{-1} = [\iota, [z_2,\dots,z_n]'\Gamma' + \iota\mu']$ does not depend on $\mu\in\R^p$, where $\iota = (1,\dots,1)'\in\R^{n-1}$. So we may assume without restriction that $\mu=0$. Moreover, 
$[\iota, [z_2,\dots,z_n]'\Gamma']$ and
$$
[\iota, [z_2,\dots,z_n]'\Gamma']
\begin{bmatrix}
1 &0\\
0 &\Sigma^{-1/2}
\end{bmatrix}
$$
also have the same column span, such that it suffices to determine the rank of $[\iota,[\tilde{x}_2,\dots,\tilde{x}_n]']$, where $\tilde{x}_i = (\Gamma\Gamma')^{-1/2}\Gamma z_i$ is spherically symmetric with $\P(\|\tilde{x}_i\|=0)=0$.
Next, we claim that the matrix $M_k = [\tilde{x}_2,\dots,\tilde{x}_{k+1}]'$ has full rank $p$, almost surely, provided that $k\ge p$. To see this, simply write $M_k = D_1D_2^{-1}\Lambda$, almost surely, where $D_1$ is $k\times k$ diagonal with entries $\|\tilde{x}_2\|,\dots,\|\tilde{x}_{k+1}\|$, $D_2$ is $k\times k$ diagonal with i.i.d. $\chi_p$ entries and $\Lambda$ is $k\times p$ and has i.i.d. $\mathcal N(0,1)$ entries. Now it is easy to see that $D_1D_2^{-1}$ is almost surely of full rank, and thus, $\P(\rank(M_k)=p) = \P(\det{\Lambda'\Lambda}\ne0) = 1$, where the last equality follows from the well known fact that the zero-set of a non-constant polynomial is a Lebesgue null-set. It remains to show that the event $A = \{\iota\in\s(M_{n-1})\}$ has probability zero. Define $B=\{\rank(M_p)=p\}$, $\iota_k = (1,\dots,1)'\in\R^k$ and the function $v: \R^{p\times p} \to \R^p$ by $v(M) = M^{-1}\iota_p$, if $\det{M} \ne 0$, and $v(M)=0$, else. Note that $\P(B)=1$ and $v$ is Borel measurable. Since $p+2\le n$, we see that $A\cap B$ is a subset of the event where both $\iota_p = M_pv(M_p)$ and $1=\tilde{x}_{p+2}'v(M_p)$, the probability of which is clearly bounded by 
\begin{align*}
&\P(1=\tilde{x}_{p+2}'v(M_p)) 
= \P\left(\|\tilde{x}_{p+2}\|^{-1} = (\tilde{x}_{p+2}'/\|\tilde{x}_{p+2}\|)v(M_p), \|\tilde{x}_{p+2}\|\ne 0\right)\\
&\quad= \E\left[
\P\left(\|\tilde{x}_{p+2}\|^{-1} = (\tilde{x}_{p+2}'/\|\tilde{x}_{p+2}\|)v(M_p), \|\tilde{x}_{p+2}\|\ne 0 \,\big|\, M_p, \|\tilde{x}_{p+2}\|\right)
\right].
\end{align*}
But the conditional probability in the previous display is equal to zero, almost surely, because $v(M_p)$, $\tilde{x}_{p+2}/\|\tilde{x}_{p+2}\|$ and $\|\tilde{x}_{p+2}\|$ are independent and $\tilde{x}_{p+2}/\|\tilde{x}_{p+2}\|$ is uniformly distributed on the unit sphere in $\R^p$, and therefore its inner product with any fixed vector has a Lebesgue density on $\R$ provided that $p\ge2$.
For Assumptions~\ref{a.design}.(\ref{a.moments1},\ref{a.moments2}), recall the moments of the $\chi^2$-distribution with $m$-degrees of freedom $\E[\|V\|^{2k}] = \prod_{j=0}^{k-1}(m+2j)$ \citep[cf.][]{Johnson94I}. The same reasoning as in part~\ref{lemma:sph-A} yields
\begin{align*}
\E[|v'Z|^8] = \E[ |v'b|^8] \E[\|Z\|^8] = \frac{\E[\|Z\|^8]}{\E[\|V\|^8]} \E[|v'V|^8] = O(1),
\end{align*}
uniformly in $v\in\mathcal S^{m-1}$. Similarly, for a symmetric matrix $M\in\R^{m\times m}$,
\begin{align*}
\E[(Z'MZ)^2] = \E[(b'Mb)^2] \E[\|Z\|^4] = \frac{\E[\|Z\|^4]}{\E[\|V\|^4]} \E[(V'MV)^2],
\end{align*}
and one easily calculates $\E[(V'MV)^2] = (\trace{M})^2 + 2\trace{M^2}$. Therefore,
\begin{align*}
\Var[Z'MZ] = (\trace{M})^2\left( \frac{\E[\|Z\|^4]}{\E[\|V\|^4]} - 1\right) + 2 \frac{\E[\|Z\|^4]}{\E[\|V\|^4]} \trace{M^2}.
\end{align*}
Finally, for a projection matrix $P\in\R^{m\times m}$, $V'PV$ follows a $\chi^2$-distribution with $\rank{P}=\|P\|_F^2$ degrees of freedom, and thus
\begin{align*}
\left(\E[(Z'PZ)^4]\right)^{1/4} 
&= \left(\frac{\E[\|Z\|^8]}{\E[\|V\|^8]}\right)^{1/4} \left(\E[(V'PV)^4]\right)^{1/4}\\
&= \left(\frac{\E[\|Z\|^8]}{\E[\|V\|^8]}\right)^{1/4} \left(\prod_{j=0}^3(\|P\|_F^2 + 2j)\right)^{1/4}
=  O(\|P\|_F^2).
\end{align*}

To establish part~\ref{lemma:sph-C}, we first verify the conditions of part~\ref{lemma:sph-B}. The finiteness of the $8$-th moment of the radial component and $\P(\|Z\|=0)=0$ are immediate. It is also elementary to calculate the higher non-central moments $\E[\|Z\|^{2k}] = (m+2)^{k}m/(m+2k) $. [Use, for example, the formula for the volume of the $m$-ball of radius $r>0$ to obtain $\P(\|Z\|\le x) = (x/\sqrt{m+2})^m$, for $x\in[0,\sqrt{m+2}]$.] Comparing this to the moments of the $\chi^2_m$ distribution $\E[\|V\|^{2k}] = \prod_{j=0}^{k-1} (m+2j)$ for $k=2,4$, we see that for $m\to\infty$ the moment ratios behave as desired. Therefore, Assumptions~\ref{a.design}.(\ref{a.factor},\ref{a.invertibility},\ref{a.moments1},\ref{a.moments2}) hold in this case. Finally, the validity of Assumption~\ref{a.design}.(\ref{a.Srivastava}) follows from \citet[Section 1.4]{Sriva13}. But Condition~(C1) can not be satisfied in view of part~\ref{lemma:sph-A} (with $r=4$) and the fact that $\E[\|Z\|^4] \ne \E[\|V\|^4]$.
\end{proof}

\begin{lemma}\label{lemma:product}
Let $Z=(Z_1,\dots, Z_m)'$ be a random $m$-vector with $\E[Z]=0$ and $\E[ZZ'] = I_m$ and let $z_1,\dots, z_n$ be i.i.d. copies of $Z$. For $p\le m$, let $\Gamma$ be a $p\times m$ matrix of full rank $p$, let $\mu\in\R^p$ and define $x_i = \Gamma z_i + \mu$.

\begin{enumerate}
	\renewcommand{\theenumi}{(\roman{enumi})}
	\renewcommand{\labelenumi}{{\theenumi}}
\item \label{lemma:product-A} If $Z$ has independent components, whose $8$-th moments are uniformly bounded, then the $x_i$ satisfy Assumptions~\ref{a.design}.(\ref{a.factor},\ref{a.Srivastava},\ref{a.moments1},\ref{a.moments2}). 

\item \label{lemma:product-B} If $z_1,\dots, z_n$ are as in Condition~\ref{c.design} and, in addition, the components of $Z$ have $8$-th moments that are uniformly bounded, then the $x_i$ satisfy Assumptions~\ref{a.design}.(\ref{a.factor},\ref{a.moments1},\ref{a.moments2}).  

\end{enumerate}
\end{lemma}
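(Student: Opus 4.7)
The overall plan is to check each condition separately, observing that parts~\ref{lemma:product-A} and \ref{lemma:product-B} share a common structural property that makes the moment arguments run in parallel. Assumption~\ref{a.design}.\ref{a.factor} holds immediately by construction in both parts. For part~\ref{lemma:product-A}, \ref{a.design}.\ref{a.Srivastava} is established by citing the sufficient conditions provided in Section~1.4 of \citet{Sriva13}, which cover vectors with independent components possessing uniformly bounded higher-order moments.

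The unifying observation for the remaining moment conditions is that in both settings every mixed moment $\E[Z_{i_1}^{\ell_1}\cdots Z_{i_d}^{\ell_d}]$ with \emph{distinct} indices $i_1,\ldots,i_d$ and total degree $\sum_s \ell_s \le 8$ factorizes as $\prod_s \E[Z_{i_s}^{\ell_s}]$ --- by independence in part~\ref{lemma:product-A}, and directly by (C1) in part~\ref{lemma:product-B}. Combined with uniformly bounded marginal eighth moments, this reduces the verification of the remaining assumptions to polynomial expansions with bounded constants.

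To check \ref{a.design}.\ref{a.moments1}, I would expand $\E[(v'Z)^4] = \sum v_{i_1}v_{i_2}v_{i_3}v_{i_4}\E[Z_{i_1}Z_{i_2}Z_{i_3}Z_{i_4}]$ and observe that since $\E[Z_i]=0$, the surviving terms are those where every distinct index in the monomial appears at least twice. Bounding each such contribution by a constant times $v_i^2v_j^2$ or $v_i^4$ and summing yields $\E[(v'Z)^4] = O(\|v\|^4)$, hence $\mathcal L_{4,n} = O(1)$. The standard second-order calculation gives
\[
\Var[Z'MZ] \;=\; 2\trace{M^2} + \sum_i (\E[Z_i^4] - 3)M_{ii}^2 \;=\; O(\trace{M^2}),
\]
using bounded fourth marginal moments together with $\sum_i M_{ii}^2 \le \|M\|_F^2 = \trace{M^2}$. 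For \ref{a.design}.\ref{a.moments2}, the same partition-based argument applied to the eighth-power expansion of $\E[(v'Z)^8]$ yields $\mathcal L_{8,n} = O(1)$. Finally, for the bound $(\E[(Z'PZ)^4])^{1/4} = O(\|P\|_F^2)$, I would expand
\[
\E[(Z'PZ)^4] \;=\; \sum_{a_1,b_1,\ldots,a_4,b_4} P_{a_1b_1}\cdots P_{a_4b_4}\,\E[Z_{a_1}Z_{b_1}\cdots Z_{a_4}Z_{b_4}],
\]
group the eight summation indices by equality-partitions, apply factorization within each block of distinct indices, and collect the resulting sums over free indices into contractions of $P$ expressible as products of traces $\trace{P^k}$ and sums of entries like $\sum_{ij}P_{ij}^r$. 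Using $\trace{P^k} = \trace{P} = \|P\|_F^2$ and $\|P\|_S\le 1$ for a projection $P$, each contraction is bounded by $\|P\|_F^8$, with the leading contribution coming from the pattern $a_s=b_s$, which gives $(\trace{P})^4 = \|P\|_F^8$.

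The main technical obstacle is the bookkeeping in this last step: there are many possible equality-patterns for the eight indices $a_1,b_1,\ldots,a_4,b_4$, and one must verify case by case that each contraction is at most $O(\|P\|_F^8)$. The delicate point in part~\ref{lemma:product-B} is that factorization applies only to monomials in distinct indices, so the partition-refined sum has to be set up carefully; since the marginal eighth moments are uniformly bounded and $\|P\|_S\le 1$ for a projection, however, collapsing indices only reduces the combinatorial size of each term and the desired bound still goes through.
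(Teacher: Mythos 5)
Your proposal is correct in substance but takes a genuinely different route for the moment bounds. The paper disposes of all of \ref{a.design}.(\ref{a.moments1},\ref{a.moments2}) by invoking Theorem~2 of Whittle (1960) as a black box: it gives $\E[|v'Z|^8]\le C\bigl(\sum_j v_j^2(\E[|Z_j|^8])^{1/4}\bigr)^4$, $\Var[Z'MZ]\le C\sum_{j,k}M_{jk}^2\sqrt{\E[|Z_j|^4]\E[|Z_k|^4]}$, and --- for the hardest piece --- a bound on the \emph{centered} fourth moment $\E[(Z'PZ-\E[Z'PZ])^4]=O(\|P\|_F^4)$, after which the reverse triangle inequality for the $L^4$-norm and the identity $\E[Z'PZ]=\trace{P}=\|P\|_F^2=\rank{P}$ yield $(\E[(Z'PZ)^4])^{1/4}\le \|P\|_F^2+D\|P\|_F=O(\|P\|_F^2)$. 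You instead expand each moment as a multinomial sum and bound it partition by partition; this is essentially a hand-rolled proof of the Whittle inequalities, and it has the advantage of treating parts (i) and (ii) in one stroke, since only mixed moments over \emph{distinct} indices are ever factorized (the paper handles (ii) by the equivalent remark that under (C1) all relevant moments coincide with the independent case). The cost is that the combinatorics for $\E[(Z'PZ)^4]$, which you acknowledge as the main obstacle, is left unexecuted: you must check every equality pattern of the eight indices, and several contractions (e.g.\ $\trace{P^4}(\trace{P})^2$ or $\sum_{i,j}P_{ij}^4$) are bounded by $\|P\|_F^8$ only because $\|P\|_F^2=\rank{P}\ge 1$ for a nonzero projection, so that lower powers of $\|P\|_F$ are dominated by higher ones --- a fact the paper uses explicitly and your sketch relies on silently. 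Centering the quadratic form first, as the paper does, collapses all of this bookkeeping into a single application of Whittle's bound, which is why I would recommend that shortcut even within your expansion-based framework. The remaining items (the factor structure, and \ref{a.design}.(\ref{a.Srivastava}) via Section~1.4 of Srivastava and Vershynin for part (i) only) are handled exactly as in the paper.
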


\begin{proof}
To establish part~\ref{lemma:product-A}, we use the results of \citet{Whittle60}. Theorem~2 in that reference shows that for a unit vector $v = (v_1,\dots, v_m)'\in\R^m$, 
$$
\E[|v'Z|^8] \le C \left(\sum_j v_j^2 (\E[|Z_{j}|^8])^{1/4}\right)^4 \le C\max_j \E[|Z_{j}|^8],
$$
for some numerical constant $C>0$, and thus, $\mathcal L_8 =O(1)$ as $m\to \infty$, in view of uniform boundedness of $\E[|Z_{j}|^8]$. Next, for a symmetric matrix $M\in\R^{m\times m}$, the same theorem yields
$$
\Var[Z'MZ] \le C \sum_{j,k} M_{jk}^2 \sqrt{\E[|Z_{j}|^4]\E[|Z_{k}|^4]}
\le C \max_j \E[|Z_{j}|^4] \trace{M^2},
$$
and, for a projection matrix $P\in\R^{m\times m}$, 
\begin{align*}
&\left|
	(\E[(Z'PZ)^4])^{1/4} - \E[Z'PZ]
\right| 
\le
\left( \E[(Z'PZ - \E[Z'PZ])^4]\right)^{1/4}\\
&\quad\quad\le
C^{1/4} \left( \sum_{j,k}P_{jk}^2 \left(\E[|Z_{j}|^8]\E[|Z_{k}|^8]\right)^{1/4} \right)^{1/2}\\
&\quad\quad\le
\left(C\max_j \E[|Z_{j}|^8] \right)^{1/4} \|P\|_F,
\end{align*}
where the first inequality is the reverse triangle inequality for the $L^4$-norm. Now the previous chain of inequalities implies that
\begin{align*}
(\E[(Z'PZ)^4])^{1/4} \le \E[Z'PZ] + D \|P\|_F \le \|P\|_F^2 + D \|P\|_F^2 = O(\|P\|_F^2),
\end{align*}
since $\E[Z'PZ] = \trace{P} = \trace{P^2} = \|P\|_F^2 = \rank{P}$ is integer, and where $D>0$ is an appropriate constant, not depending on $m$. The validity of Assumption~\ref{a.design}.(\ref{a.Srivastava}) follows from the arguments in Section~1.4 in \citet{Sriva13}.

For part~\ref{lemma:product-B}, simply note that under the factorization assumption in \ref{c.design} all the moments occurring in Conditions~\ref{a.design}.(\ref{a.moments1},\ref{a.moments2}) are identical to those calculated under independence of the components of $Z$. Therefore, the result follows from part~\ref{lemma:product-A}.
\end{proof}


\section{Proofs of auxiliary results of Section~\ref{sec:normality}}
\label{sec:Appendixnormality}
\begin{proof}[Proof of Lemma~\ref{lemma:Bhansali}]
For ease of notation we drop the subscript $n$ that indexes the position of the matrix $A_n$ in the array, i.e., we write $A = A_n$ and denote by $a_{ij}$ the $ij$-th entry of that matrix. Similarly, we write $Z = (Z_1,\dots,Z_n)'$, where $Z_i = Z_{i,n}$. Now, expand
\begin{align*}
Z'AZ - \E[Z'AZ|\mathcal G_n] &\quad=\quad \sum_{i\ne j}^n Z_iZ_ja_{ij} + \sum_{j=1}^na_{jj}(Z_j^2-1)\\
&\quad=\quad \sum_{j=1}^n 
	2Z_j \sum_{i=1}^{j-1} Z_i a_{ij} \quad +\quad \sum_{j=1}^na_{jj}(Z_j^2-1)\\
&\quad = \quad \bar{T}_n \quad+\quad T_n^*, 	
\end{align*}
where we adopt the convention that empty sums are equal to zero. We show that $\bar{T}_n/(\sqrt{2}\|A\|_F) \xrightarrow[]{w} \mathcal N(0,1)$ and $T_n^*/(\sqrt{2}\|A\|_F) \xrightarrow[]{i.p.}0$, as $n\to\infty$. 

The desired convergence of $T_n^*$ follows from the straight forward calculation
\begin{align*}
\E\left[ \left(\frac{T_n^*}{\sqrt{2}\|A\|_F}\right)^2 \Bigg| \mathcal G_n\right] 
&= \sum_{j=1}^n\frac{a_{jj}^2 \E[(Z_j^2-1)^2|\mathcal G_n]}{2\|A\|_F^2}\\
&= \sum_{j=1}^n\frac{a_{jj}^2 (\E[Z_j^4|\mathcal G_n]-1)}{2\|A\|_F^2} \\
&\le \frac{1}{2}\sum_{j=1}^n\frac{a_{jj}^2 \E[Z_j^4|\mathcal G_n]}{\|A\|_F^2},
\end{align*}
and by assumption.

To see the weak convergence of $\bar{T}_n$, for $j=1,\dots, n$, define 
$$
V_{n,j} \quad=\quad \sqrt{2}Z_j \sum_{i=1}^{j-1} Z_ia_{ij}/\|A\|_F,
$$
$\mathcal F_{n,0} = \mathcal G_n$ and $\mathcal F_{n,j} = \sigma\left( \mathcal G_n, Z_i : i\le j\right)$, by which we mean the smallest sigma algebra for which $Z_1,\dots, Z_j$ are measurable and which also contains $\mathcal G_n$. Note that for each $n,j\in\N$, $\mathcal F_{n,j-1} \subseteq \mathcal F_{n,j} \subseteq \mathcal F$, $\|A_n\|_F=\sqrt{\trace{A^2}}$ is $\mathcal F_{n,0}$ measurable and $V_{n,j}$ is $\mathcal F_{n,j}$ measurable. Moreover, we have
\begin{align*}
\frac{\bar{T}_n}{\sqrt{2}\|A\|_F} = \sum_{j=1}^n V_{n,j}.
\end{align*}
Now, by the central limit theorem for dependent random variables \citep[see][and notice the discussion in \citet{Helland82} following eq. (2.7)]{Dvoretzky72, Helland82}, it remains to verify that 
\begin{align}
&\sum_{j=1}^n \E[V_{n,j}|\mathcal F_{n,j-1}] \xrightarrow[n\to\infty]{i.p.}0,\label{eq:CLTmean}\\
&\sum_{j=1}^n \Var[V_{n,j}|\mathcal F_{n,j-1}] \xrightarrow[n\to\infty]{i.p.}1, \quad\text{and}\label{eq:CLTvar}\\
&\sum_{j=1}^n \E[V_{n,j}^2\mathbf{1}_{|V_{n,j}|>\delta}|\mathcal F_{n,j-1}] \xrightarrow[n\to\infty]{i.p.}0 \quad \text{for all $\delta>0$,}\label{eq:CLTbound}
\end{align}
as in equations (2.5)-(2.7) in \citet{Helland82}.
The convergence in \eqref{eq:CLTmean} is trivial, since $\E[V_{n,j}|\mathcal F_{n,j-1}] = 0$, in view of the conditional independence of the $Z_i$ given $\mathcal G_n$.

For \eqref{eq:CLTvar}, abbreviate $T_n = \sum_{j=1}^n \Var[V_{n,j}|\mathcal F_{n,j-1}]  = \sum_{j=1}^n \E[V_{n,j}^2|\mathcal F_{n,j-1}]$ and use conditional independence again to obtain
\begin{align*}
\E[V_{n,j}^2|\mathcal F_{n,j-1}] = 
2\|A\|_F^{-2}  \left(\sum_{i=1}^{j-1} Z_ia_{ij}\right)^2.
\end{align*}
Expanding the squared sum gives
\begin{align*}
 \left(\sum_{i=1}^{j-1} Z_ia_{ij}\right)^2 = \sum_{i,k}^{j-1} Z_iZ_ka_{ij}a_{kj} 
 = \sum_{i=1}^{j-1}Z_i^2a_{ij}^2 + 2 \sum_{i< k}^{j-1} Z_iZ_ka_{ij}a_{kj},
\end{align*}
and therefore, the absolute difference $|T_n - 1|$ can be bounded as
\begin{align*}
|T_n - 1| &\quad=\quad \|A\|_F^{-2}\bigg|
\sum_{j=1}^n 2\left(\sum_{i=1}^{j-1} Z_i^2 a_{ij}^2 + 2 \sum_{i<k}^{j-1} Z_i Z_ka_{ij}a_{kj}\right) - \|A\|_F^2
\bigg|\\
&\quad\le\quad
\|A\|_F^{-2}\Bigg(
2 \bigg|\sum_{i<j}^n (Z_i^2-1)a_{ij}^2\bigg|
+ 4 \bigg|\sum_{j=1}^n\sum_{i<k}^{j-1}Z_iZ_ka_{ij}a_{kj}\bigg| 
+ \bigg|  \sum_{j=1}^n a_{jj}^2 \bigg|
\Bigg).
\end{align*}
To establish the convergence in \eqref{eq:CLTvar}, it remains to show convergence to zero in probability of the terms in absolute values on the last line of the preceding display multiplied by $\|A\|_F^{-2}$. 

First, note that $\|A\|_F^{-2}\sum_{j=1}^n a_{jj}^2$ converges to zero in probability by assumption and because of $\E[Z_j^4|\mathcal G_n]\ge 1$. Now, write $T_{n,1} = \sum_{i<j}^n (Z_i^2-1) a_{ij}^2$ and $T_{n,2} = \sqrt{2}\sum_{j=1}^n\sum_{i<k}^{j-1} Z_iZ_ka_{ij}a_{kj}$ and observe that
\begin{align*}
\E[T_{n,1}^2|\mathcal G_n] &= \sum_{i_1<j_1}^n \sum_{i_2<j_2}^n 
\E[(Z_{i_1}^2-1)(Z_{i_2}^2-1)|\mathcal G_n]a_{i_1j_1}^2a_{i_2j_2}^2\\
&= \sum_{j_1,j_2}^n \sum_{i=1}^{j_1\land j_2 - 1} (\E[Z_i^4|\mathcal G_n] - 1) a_{ij_1}^2 a_{ij_2}^2\\
&\le \left(\max_{j=1,\dots,n} \E[Z_j^4|\mathcal G_n]\right) \sum_{j_1,j_2}^n \sum_{i=1}^{j_1\land j_2 - 1} a_{ij_1}^2 a_{ij_2}^2,
\end{align*}
and
\begin{align*}
\E[T_{n,2}^2|\mathcal G_n] &= 
2\sum_{j_1,j_2}^n \sum_{i_1<k_1}^{j_1-1}\sum_{i_2<k_2}^{j_2-1} 
\E[Z_{i_1}Z_{k_1}Z_{i_2}Z_{k_2}|\mathcal G_n]a_{i_1j_1}a_{k_1j_1}a_{i_2j_2}a_{k_2j_2}\\
&= \sum_{j_1,j_2}^n 2 \sum_{i<k}^{j_1\land j_2-1}a_{i j_1}a_{k j_1}a_{i j_2}a_{k j_2}\\
&\le \left(\max_{j=1,\dots,n} \E[Z_j^4|\mathcal G_n]\right)
\sum_{j_1,j_2}^n  \sum_{i\ne k}^{j_1\land j_2-1}a_{i j_1}a_{k j_1}a_{i j_2}a_{k j_2}.
\end{align*}
Therefore, if we define the triangular truncation operator $\tilde{A}$ of the symmetric matrix $A$ by $\tilde{A} = \sum_{s>t} e_s a_{st} e_t'$, where $e_s\in\R^n$ is the $s$-th element of the standard basis in $\R^n$, we see that
\begin{align}
\trace{\left(\tilde{A}'\tilde{A}\right)^2} &= \trace{\left(\sum_{s_1>t_1}^n \sum_{s_2>t_2}^n e_{t_1}a_{s_1t_1}e_{s_1}'e_{s_2}a_{s_2t_2}e_{t_2}'\right)^2}\notag\\
&=\trace{\left(\sum_{s=1}^n\sum_{t_1,t_2=1}^{s-1}e_{t_1}a_{st_1}a_{st_2}e_{t_2}'\right)^2}\notag\\
&=\sum_{s_1,s_2=1}^n \trace{\sum_{t_1,t_2=1}^{s_1-1}e_{t_1}a_{s_1t_1}a_{s_1t_2}e_{t_2}'\sum_{u_1,u_2=1}^{s_2-1}e_{u_1}a_{s_2u_1}a_{s_2u_2}e_{u_2}'}\notag\\
&= \sum_{s_1,s_2=1}^n \sum_{t_1,t_2=1}^{s_1\land s_2 -1}a_{s_1t_1}a_{s_1t_2}a_{s_2t_2}a_{s_2t_1},\label{tracetilde}
\end{align}
and, in turn, that
\begin{align}
	\begin{split}\label{T1+T2}
	&\E[T_{n,1}^2|\mathcal G_n] + \E[T_{n,2}^2|\mathcal G_n] \\
	&\quad\le \left(\max_{j=1,\dots,n} \E[Z_j^4|\mathcal G_n]\right) 
	\sum_{j_1,j_2}^n  \sum_{i, k}^{j_1\land j_2-1}a_{i j_1}a_{k j_1}a_{i j_2}a_{k j_2}\\
	&\quad=\left(\max_{j=1, \dots, n} \E[Z_j^4|\mathcal G_n]\right)  \trace{(\tilde{A}'\tilde{A})^2}.
	\end{split}
\end{align}
Now, convergence to zero of $\|A\|_F^{-2}(|T_{n,1}|+|T_{n,2}|)$ in probability follows from the above considerations and Lemma~2.1 in \citet{Bhansali07}, which yields the inequality
\begin{align*}
\E\left[
\|A\|_F^{-4}(|T_{n,1}|+|T_{n,2}|)^2\Big|\mathcal G_n \right]
&\le 2 \left(\max_{j=1, \dots, n} \E[Z_j^4|\mathcal G_n]\right) 
\|A\|_F^{-4} \|\tilde{A}'\tilde{A}\|_F^2 \\
&\le
2C^2  \left(\max_{j=1, \dots, n} \E[Z_j^4|\mathcal G_n]\right) \frac{\|A\|_S^2}{\|A\|_F^2},
\end{align*}
where $C>0$ is a global constant, not depending on $n$. Thus, by assumption, the bound on the far right-hand-side of the preceding display converges to zero, in probability, which establishes the convergence in \eqref{eq:CLTvar}.

Finally, for \eqref{eq:CLTbound} we abbreviate $m_n = \max_j  \E[Z_j^4|\mathcal G_n]$ and use the upper bound $V_{n,j}^2\mathbf{1}_{|V_{n,j}|>\delta}\le \delta^{-2}V_{n,j}^4$. Now, 
\begin{align*}
&\E\left[ \sum_{j=1}^n\E[V_{n,j}^4|\mathcal F_{n,j-1}] \Bigg| \mathcal G_n \right] 
= 4 \|A\|_F^{-4} \sum_{j=1}^n  \E[Z_j^4|\mathcal G_n] 
	\E\left[\left(\sum_{i=1}^{j-1} Z_i a_{ij} \right)^4\Bigg|\mathcal G_n\right]\\
&\quad=
4\|A\|_F^{-4} 
\sum_{j=1}^n  \E[Z_j^4|\mathcal G_n] 
	\left(
		3\sum_{i_1\ne i_2}^{j-1} a_{i_1j}^2a_{i_2j}^2
		+ \sum_{i=1}^{j-1}\E[Z_i^4|\mathcal G_n]a_{ij}^4
	\right)\\
&\quad\le
4 m_n(m_n+ 3) 
\|A\|_F^{-4}\sum_{j=1}^n \sum_{i_1,i_2=1}^{j-1}a_{i_1j}^2a_{i_2j}^2,
\end{align*}
and furthermore
\begin{align*}
\|A\|_F^{-4} \sum_{j=1}^n \sum_{i_1,i_2=1}^{j-1}a_{i_1j}^2a_{i_2j}^2
&= \|A\|_F^{-4} \sum_{j=1}^n \left(\sum_{i=1}^{j-1}a_{ij}^2\right)^2\\
&\le
\|A\|_F^{-4} \left(\max_j\sum_{i=1}^na_{ij}^2\right) \sum_{i,j=1}^n a_{ij}^2\\
&= \frac{\max_j (A^2)_{jj}}{\|A\|_F^2}.
\end{align*}
Together with $m_n\ge 1$ and our assumption, this implies that the upper bound on the second-to-last display converges to zero in probability.
\end{proof}


\begin{proof}[Proof of Lemma~\ref{lemma:ProjDiag}]
For convenience, we drop the subscript $n$ that indexes the position in the array whenever there is no risk of confusion. Let $w_i' = (1,x_i')R$ denote the $i$-th row of the matrix $W$ and define $\tilde{w}_i = \Omega_W^{-1/2} w_i$, $\tilde{W} = W\Omega_W^{-1/2}$ and $S_1 = \tilde{W}'\tilde{W} - \tilde{w}_1\tilde{w}_1'=\sum_{i=2}^n \tilde{w}_i\tilde{w}_i' = \Omega_W^{-1/2}R'U_{-1}'U_{-1}R\Omega_W^{-1/2}$, where 
$$\Omega_W = \E[w_1w_1'] = R'\begin{bmatrix} 1 &\mu'\\ \mu &\Sigma+\mu\mu' \end{bmatrix}R$$ 
is positive definite and $U_{-1}$ is defined as in Assumption~\ref{a.design}.(\ref{a.invertibility}). This assumption also entails that $W'W$, $\tilde{W}'\tilde{W}$ and $S_1$ are invertible with probability one, where we denote the corresponding null set by $N$. For convenience, we redefine these quantities in an arbitrary invertible and measurable way on $N$. Moreover, we must also have $p_n+2\le n$ under \ref{a.design}.(\ref{a.invertibility}). 

Since $h_j = h_{j,n} = w_j'(W'W)^{-1}w_j$ on $N^c$, permuting the $h_1, \dots, h_n$ is equivalent to a permutation of $w_1, \dots, w_n$, which are i.i.d., and therefore their joint distribution is invariant under permutation. Hence, the $h_j$ are exchangeable random variables. In particular, the $h_j$ are identically distributed and therefore the fact that $\sum_{j=1}^n h_j = \trace{P_W} = k_n$, on $N^c$, entails that $\E[h_1] = k_n/n$. We also note for later use that $\Var[h_1] = \E[h_1^2] - \E[h_1]^2 \le \E[h_1] - \E[h_1]^2 = (1-k_n/n)k_n/n$, since $0\le h_1\le 1$. It only remains to show that the variance actually converges to zero.

As a preliminary consideration, we study $h_1$ in the case where $t_n := k_n/n \to t \in [0,1]$. The general case of possibly non-converging $t_n$ then follows from a standard subsequence argument (see the end of the proof). The case $t\in\{0,1\}$ is immediate, because here $\Var[h_1] \to 0$ as $n\to\infty$, and thus, $h_1 \to t$ in probability, by the arguments in the previous paragraph. Assume now that $t\in(0,1)$. Note that $P_W = P_{\tilde{W}}$ and use the Sherman-Morrison formula to obtain
\begin{align*}
h_1 = \tilde{w}_1'\left( S_1 + \tilde{w}_1\tilde{w}_1' \right)^{-1} \tilde{w}_1 
= \frac{\tilde{w}_1'S_1^{-1}\tilde{w}_1}{1 + \tilde{w}_1'S_1^{-1}\tilde{w}_1},
\end{align*}
at least on $N^c$. For $\alpha \ge 0$, set $J_\alpha = (S_1 + (n-1)\alpha I_{k_n})^{-1}$ and define the random function 
$$
\Psi_n(\alpha) = \frac{\tilde{w}_1'J_\alpha\tilde{w}_1}{1 + \tilde{w}_1'J_\alpha\tilde{w}_1},
$$
which satisfies $\Psi_n(0) = h_1$, almost surely. Since $y \mapsto y/(1+y)$ is non-decreasing on $[0,\infty)$, and $M \mapsto M^{-1}$ is non-increasing on invertible hermitian matrices \citep[cf.][p. 114]{Bhatia97}, the function $\Psi_n$ is non-increasing on $[0,\infty)$. We establish the convergence in probability of $\Psi_n(0)$ by first analyzing the limiting behavior of $\Psi_n(\alpha)$ as $n\to\infty$, for every $\alpha>0$. To this end, we consider the conditional mean and variance of $\tilde{w}_1'J_\alpha\tilde{w}_1$ given $S_1$.

Since $\tilde{w}_1$ and $S_1$ are independent and $\E[\tilde{w}_1\tilde{w}_1'] = I_{k_n}$, one easily calculates $\E[\tilde{w}_1'J_\alpha\tilde{w}_1|S_1] = \trace{J_\alpha}$. The conditional variance is slightly more involved. Abbreviate $\Sigma_W = \Var[w_1]$, $\bar{\mu} = \E[\tilde{w}_1] = \Omega_W^{-1/2}R'(1,\mu')'$ and use Assumption~\ref{a.design}.(\ref{a.factor}) to write 
$$
\tilde{w}_1 = \bar{\mu} + \Omega_W^{-1/2}R'\begin{pmatrix} 0 &0\\ 0 &\Gamma\end{pmatrix} \begin{pmatrix}0\\ z_1\end{pmatrix}.
$$
Also notice that $\Sigma_W = \Omega_W - R'(1,\mu')'(1,\mu')R$, and hence 
$\Omega_W^{-1/2}\Sigma_W\Omega_W^{-1/2} = I_{k_n} - \bar{\mu}\bar{\mu}'$. Since $\Omega_W^{-1/2}\Sigma_W\Omega_W^{-1/2}$ is positive semidefinite, this also implies that $\|\bar{\mu}\|\le 1$.
Now, decompose the quantity of interest
\begin{align}
\tilde{w}_1'J_\alpha\tilde{w}_1 \quad=\quad &\bar{\mu}' J_\alpha \bar{\mu} 
\quad+\quad 2\bar{\mu}' J_\alpha \Omega_W^{-1/2}R'\begin{pmatrix} 0 &0\\ 0 &\Gamma\end{pmatrix} \begin{pmatrix}0\\ z_1\end{pmatrix} \notag\\
&+ (0,z_1') \begin{pmatrix} 0 &0\\ 0 &\Gamma'\end{pmatrix} R \Omega_W^{-1/2} J_\alpha \Omega_W^{-1/2}R'\begin{pmatrix} 0 &0\\ 0 &\Gamma\end{pmatrix} \begin{pmatrix}0\\ z_1\end{pmatrix}. \label{eq:Jalpha}
\end{align}
Conditional on $S_1$, the variance of $\bar{\mu}' J_\alpha \bar{\mu}$ is zero, the variance of half of the mixed term is 
\begin{align*}
\bar{\mu}' J_\alpha \Omega_W^{-1/2}\Sigma_W\Omega_W^{-1/2}J_\alpha \bar{\mu} 
= \bar{\mu}'J_\alpha^2\bar{\mu} - (\bar{\mu}'J_\alpha \bar{\mu})^2
\end{align*}
and the variance of the last term in \eqref{eq:Jalpha} is
\begin{align*}
\Var[(0,z_1')M(0,z_1')'|S_1] &= \Var[z_1'M_{22}z_1|S_1] = O(\trace{M_{22}^2}) + (\trace{M_{22}})^2o(1)\notag \\
&\le 
O(\trace{M^2}) + (\trace{M})^2o(1), 
\end{align*}
by assumption, and where we have abbreviated the symmetric positive semidefinite matrix in between the vectors $(0,z_1')$ and $(0,z_1')'$ in \eqref{eq:Jalpha} by $M$ and used the notation $M_{22}$ to denote its bottom right sub matrix of order $m\times m$. Now, 
$\trace{M} = \trace{J_\alpha \Omega_W^{-1/2}\Sigma_W\Omega_W^{-1/2}} = \trace{J_\alpha} - \bar{\mu}'J_\alpha\bar{\mu}$ and
$\trace{M^2} = \trace{J_\alpha \Omega_W^{-1/2}\Sigma_W\Omega_W^{-1/2} J_\alpha \Omega_W^{-1/2}\Sigma_W\Omega_W^{-1/2}} 
= \trace{J_\alpha^2} - 2\bar{\mu}'J_\alpha^2 \bar{\mu} + (\bar{\mu}'J_\alpha\bar{\mu})^2$. For $\alpha>0$, $\|J_\alpha\|_S \le [\alpha(n-1)]^{-1}$ and $\|\bar{\mu}\|^2\le 1$. Therefore, $\bar{\mu}'J_\alpha\bar{\mu}$ and $\bar{\mu}'J_\alpha^2\bar{\mu}$ converge to zero, almost surely, for every $\alpha>0$. Thus, in order to show that $\Var[\tilde{w}_1'J_\alpha \tilde{w}_1|S_1]$ converges to zero, almost surely, for every $\alpha>0$, it suffices to show that $\trace{J_\alpha^2}\to 0$ as $n\to\infty$, almost surely, and that $\trace{J_\alpha}$ is almost surely convergent. Moreover, if we can even show that $\trace{J_\alpha} \to \psi_\alpha\in[0,\infty)$, almost surely, for every $\alpha>0$, then we also have $\tilde{w}_1'J_\alpha\tilde{w}_1 \to \psi_\alpha$, in probability (since $\E[\tilde{w}_1'J_\alpha\tilde{w}_1|S_1] = \trace{J_\alpha}$), and thus $\Psi_n(\alpha) \to \Psi(\alpha) := \psi_\alpha/(1+\psi_\alpha)$, in probability, for every $\alpha>0$.

Therefore, we need to study the limiting behavior of 
\begin{align}
\trace{(S_1+(n-1)\alpha I_{k_n})^{-\ell}} &
= \frac{k_n}{(n-1)^\ell} \frac{1}{k_n}\sum_{j=1}^{k_n} \frac{1}{(\lambda_j+\alpha)^\ell} \notag\\
&= \frac{k_n}{(n-1)^\ell}\int_0^\infty (y+\alpha)^{-\ell}\, d F^{S_1/(n-1)}(y), \label{eq:ESDconv}
\end{align}
for $\ell = 1, 2$, where $0 \le \lambda_1 \le \dots \le \lambda_{k_n}$ are the ordered eigenvalues of $S_1/(n-1) = \sum_{j=2}^n \tilde{w}_j\tilde{w}_j'/(n-1)$ and $F^{S_1/(n-1)}$ denotes the corresponding empirical spectral distribution function. 
Now one easily verifies the assumptions of Theorem~1.1 in \citet{Bai08}. First note that the $\tilde{w}_j$ are i.i.d. and $\E[\tilde{w}_1\tilde{w}_1'] = I_{k_n}$. Second, by the same argument following \eqref{eq:Jalpha} and for an arbitrary non-random $k_n\times k_n$ matrix $B$ with bounded spectral norm, we have
\begin{align*}
&\E[|\tilde{w}_1'B\tilde{w}_1 - \trace{B}|^2]  = \Var[\tilde{w}_1'(B/2+B'/2)\tilde{w}_1] \\
&\quad= O( \trace{(B/2+B'/2)^2}) + (\trace{B} + O(1))^2o(1) + O(1) \\
&\quad= O(k_n \|B\|_S^2) + k_n^2\|B\|_S^2 o(1) + O(1)= o(n^2).
\end{align*}
Recall also that for now $t_n=k_n/n \to t\in (0,1)$.
Therefore, $F^{S_1/(n-1)}$ converges weakly, almost surely to the Mar\v{c}enko-Pastur distribution with Lebesgue density $f^{MP}(y) = \sqrt{(y-a)(b-y)}/(2\pi t y)$ on $[a,b]$, where $a = (1-\sqrt{t})^2$ and $b = (1+\sqrt{t})^2$. Now we see that we can not use the same strategy to establish the convergence of \eqref{eq:ESDconv} in the case where $\alpha=0$, because the function $h_\alpha(y)=(y+\alpha)^{-1}$ is not bounded on $(0,\infty)$ in that case. However, for $\alpha>0$, $h_\alpha$ is bounded and continuous on $[0,\infty)$ and therefore the integral in \eqref{eq:ESDconv} converges almost surely,
\begin{align*}
\int_0^\infty h_\alpha^{\ell}(y)\, d F^{S_1/(n-1)}(y)
\quad\xrightarrow[]{a.s.}\quad \int_a^b h_\alpha^{\ell}(y) f^{MP}(y)\,dy \;\in\; (0,\infty).
\end{align*}
Since $k_n/(n-1) \to t\in (0,1)$ and $k_n/(n-1)^2 \to 0$, this means that $\trace{J_\alpha} \to \psi_\alpha := t \int_a^b h_\alpha(y) f^{MP}(y)\,dy$ and $\trace{J_\alpha^2}\to 0$, almost surely, for every $\alpha>0$. As discussed at the end of the previous paragraph, this entails that $\Psi_n(\alpha) \to \Psi(\alpha)$, in probability, for every $\alpha>0$, where the function $\Psi$ is given by
$$
\Psi(\alpha) = \frac{t\int_a^b h_\alpha(y) f^{MP}(y)\,dy}{1+ t \int_a^b h_\alpha(y) f^{MP}(y)\,dy}.
$$
Now it is easy to see (e.g., by the dominated convergence theorem) that the function $\Psi$ is continuous and non-increasing on $[0,\infty)$ (recall that here $t<1$ and $a>0$). Moreover, the limiting integral for $\alpha=0$ can be evaluated as $\int_a^b h_0(y) f^{MP}(y)\,dy = 1/(1-t)$ \citep[cf.][Lemma B.1]{Huber13} resulting in $\Psi(0) = t$.

Let us briefly recapitulate what we have found so far. First of all, we have seen that $h_1 = h_{1,n} \to t$ in probability, as $n\to\infty$, if $t_n\to t\in\{0,1\}$. For $t_n \to t\in(0,1)$, we know that $\E[h_{1,n}] = k_n/n \to t$ and $0\le \Psi_n(\alpha) \le \Psi_n(0) = h_1 \le 1$ almost surely. Moreover, $\Psi_n(\alpha) \to \Psi(\alpha)$ in probability, for every $\alpha>0$, and $\Psi(\alpha) \to \Psi(0) = t$ as $\alpha\to 0$. Thus, Lemma~\ref{lemma:XnYn} applies and we obtain that $h_{1,n} \to t$ in probability, also in the case $t\in(0,1)$.

Finally, consider $\Delta_n := | h_1 - k_n/n|$ with arbitrary $t_n = k_n/n \in [0,1]$. Suppose that $c := \limsup \E[\Delta_n] > 0$. Then there exists a subsequence $n'$, such that $\E[\Delta_{n'}] \to c$, as $n'\to \infty$. By compactness, there exists a further subsequence $n''$, such that $t_{n''} \to t\in[0,1]$, as $n'' \to \infty$. But in this case, our previous arguments have shown that $\Delta_{n''} \to 0$ in probability, which also entails that $\E[\Delta_{n''}] \to 0$, by boundedness, contradicting the assertion that $\E[\Delta_{n'}] \to c >0$.
\end{proof}


\section{Other technical results}

\begin{lemma}\label{lemma:sigma}
Under the model \eqref{eq:linmod}, suppose that $\frac{1}{n}\sum_{i=1}^n \E[(\eps_i/\sigma_n)^4|x_i] = O_\P(1)$ and $\rank(U)=p_n+1$, almost surely, for all $n\in\N$. If $\limsup_{n\to\infty} p_n/n<1$, then $\sqrt{n}|\hat{\sigma}_n^2/\sigma_n^2 - 1| = O_\P(1)$. In particular, we have $\P(\hat{\sigma}_n^2 =0) \to 0$ as $n\to\infty$.
\end{lemma}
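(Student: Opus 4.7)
The plan is to write $\hat\sigma_n^2/\sigma_n^2$ as a quadratic form in the standardized errors $\tilde\eps_i := \eps_i/\sigma_n$, compute its conditional mean and variance given $X$, and then apply Chebyshev's inequality conditionally.

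Since $\rank(U) = p_n+1$ almost surely, the residual projection matrix $A_n := I_n - P_U$ is symmetric idempotent with $\trace(A_n) = \trace(A_n^2) = n - p_n - 1$, and $\hat\sigma_n^2 = \eps' A_n \eps /(n-p_n-1)$. Thus
\[
\hat\sigma_n^2/\sigma_n^2 \;=\; \tilde\eps' A_n \tilde\eps/(n-p_n-1),
\]
and since $A_n$ is $\sigma(X)$-measurable while $\E[\tilde\eps_i|X]=0$, $\E[\tilde\eps_i^2|X]=1$ with the $\tilde\eps_i$ conditionally independent given $X$, the conditional mean is $\trace(A_n)/(n-p_n-1)=1$.

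Next I would compute the conditional variance. A direct expansion, using conditional independence and $\E[\tilde\eps_i|X]=0$, yields
\[
\Var[\tilde\eps' A_n \tilde\eps \,|\, X] \;=\; 2\trace(A_n^2) + \sum_{i=1}^n (A_n)_{ii}^2\bigl(\E[\tilde\eps_i^4|X] - 3\bigr).
\]
Because $0 \le (A_n)_{ii} \le 1$, we have $(A_n)_{ii}^2 \le (A_n)_{ii}$ and $\sum_i (A_n)_{ii} = n-p_n-1 \le n$. Combining with the hypothesis $\frac{1}{n}\sum_i \E[\tilde\eps_i^4|x_i] = O_\P(1)$, this gives $\Var[\tilde\eps' A_n \tilde\eps | X] = O_\P(n)$.

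Now apply the conditional Chebyshev inequality: for every $M>0$,
\[
\P\bigl(|\tilde\eps' A_n \tilde\eps - (n-p_n-1)| > M\sqrt n \,\big|\, X\bigr) \;\le\; \frac{\Var[\tilde\eps' A_n \tilde\eps | X]}{M^2 n} \;=\; O_\P(M^{-2}).
\]
Taking unconditional expectations and then the limit superior in $n$ shows that $\tilde\eps' A_n \tilde\eps - (n-p_n-1) = O_\P(\sqrt n)$. Dividing by $n-p_n-1$ and using $\limsup_n p_n/n < 1$, which implies $n-p_n-1 \asymp n$, we conclude $\sqrt n |\hat\sigma_n^2/\sigma_n^2 - 1| = O_\P(1)$. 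The final assertion is immediate: $\P(\hat\sigma_n^2 = 0) \le \P(|\hat\sigma_n^2/\sigma_n^2 - 1| \ge 1) \to 0$.

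No step is genuinely hard; the only point that needs a bit of care is the variance computation with general (possibly non-symmetric) conditional error distributions, where the naive $\chi^2$-type formula must be replaced by the expression above and bounded using $(A_n)_{ii}^2 \le (A_n)_{ii}$ together with the assumed bound on the average fourth conditional moment.
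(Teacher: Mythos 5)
Your proof is correct and follows essentially the same route as the paper: write $\hat{\sigma}_n^2/\sigma_n^2$ as a quadratic form in $\eps/\sigma_n$ with the ($\sigma(X)$-measurable) residual projection, compute the conditional mean ($=1$) and the conditional variance $2\trace(A_n^2)+\sum_i (A_n)_{ii}^2(\E[(\eps_i/\sigma_n)^4|x_i]-3)$, bound it via $0\le (A_n)_{ii}\le 1$ and the averaged fourth-moment hypothesis, and finish with a conditional Chebyshev/Markov argument. The only point to tighten is the passage from the conditional bound to the unconditional one: since the conditional variance is random, you should integrate $\min\{\Var[\cdot|X]/(M^2n),1\}$ and split on the event $\{\Var[\cdot|X]/n>K\}$ (exactly the ``$\wedge 1$'' truncation the paper uses), rather than taking expectations of an $O_\P$ statement directly.
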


\begin{remark*}\normalfont
The assumption that $\frac{1}{n}\sum_{i=1}^n \E[(\eps_i/\sigma_n)^4|x_i] = O_\P(1)$ is clearly weaker than a uniform bound on $\E[(\eps_1/\sigma_n)^4]$ or a uniform bound on $\E[\eps_1^4]$ together with $\liminf_n \sigma_n^2 >0$. Clearly, also Assumption~\ref{a.error} implies $\frac{1}{n}\sum_{i=1}^n \E[(\eps_i/\sigma_n)^4|x_i] = O_\P(1)$.
\end{remark*}

\begin{proof}[Proof of Lemma~\ref{lemma:sigma}]
Recall that $\hat{\sigma}_n^2 = \eps'M\eps$, almost surely, where $M := M(X) := (I_n-P_U)/(n-p-1)$ is a function of the design matrix $X$. Note that $y_1,\dots, y_n$ are conditionally independent given $X$, and hence, also $\eps_i =  y_i - \E[y_i|x_i]$, for $i=1,\dots, n$, are conditionally independent given $X$. Therefore, one easily obtains the almost sure identities
\begin{align*}
\E[\hat{\sigma}_n^2/\sigma_n^2|X] &= \trace{M}  \hspace{1cm} \text{and}\\
\Var[\hat{\sigma}_n^2/\sigma_n^2|X] &= 2 \trace{M^2} + \sum_{i=1}^n(\E[(\eps_i/\sigma_n)^4|x_i]-3)M_{ii}^2.
\end{align*}
By our assumption on $U$, with probability one, $\trace{M} = 1$ and $\trace{M^2} = 1/(n-p-1)$, whereas $M_{ii}^2 \le 1/(n-p-1)^2$ holds everywhere, since the diagonal entries of the projection matrix $I_n-P_U$ are always between $0$ and $1$. Taken together, we see that $\E[\hat{\sigma}_n^2/\sigma_n^2|X] = 1$ and $\Var[\hat{\sigma}_n^2/\sigma_n^2|X]\le 2/(n-p-1) + \sum_{i=1}^n\E[(\eps_i/\sigma_n)^4|x_i]/(n-p-1)^2$. Now, the conditional Markov inequality yields
\begin{align*}
\P(\sqrt{n}|\hat{\sigma}_n^2/\sigma_n^2 - 1|>\delta)
&= \E[\P(\sqrt{n}|\hat{\sigma}_n^2/\sigma_n^2 - 1|>\delta|X)\land 1] \\
&\le \E\left[
	\left(\frac{n}{\delta^2} \Var[\hat{\sigma}_n^2/\sigma_n^2|X]\right) \land 1
\right]\\
&\le
\P(n\Var[\hat{\sigma}_n^2/\sigma_n^2|X] > \delta) + \left( \frac{1}{\delta}\land 1\right).
\end{align*}
Since $n\Var[\hat{\sigma}_n^2/\sigma_n^2|X] = O_\P(1)$, in view of the previous considerations and the assumptions $\limsup_{n\to\infty} p_n/n<1$ and $\frac{1}{n}\sum_{i=1}^n \E[(\eps_i/\sigma_n)^4|x_i] = O_\P(1)$, this finishes the proof of the first claim.
The second assertion follows immediately, because of
$\P(\hat{\sigma}_n^2=0) \le \P(|\hat{\sigma}_n^2/\sigma_n^2 - 1| > 1/2)$.
\end{proof}


\begin{lemma}
\label{lemma:XnYn}
For $n\in\N$ and $\alpha>0$, let $h_n$ and $\Psi_n(\alpha)$ be real random variables such that $0\le \Psi_n(\alpha) \le h_n \le 1$ almost surely, and, for $t\in[0,1]$, let $\Psi : [0,\infty)\to [0,1]$ be such that $\Psi(\alpha) \to t$ as $\alpha\to 0$. If for every $\alpha>0$, $\Psi_n(\alpha) \xrightarrow[]{} \Psi(\alpha)$ in probability, and $\E[h_n] \xrightarrow[]{} t$ as $n\to\infty$, then $h_n  \xrightarrow[]{} t$ in probability, as $n\to\infty$.
\end{lemma}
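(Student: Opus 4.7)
The plan is to show the stronger claim that $h_n \to t$ in $L^1$, which for $[0,1]$-valued random variables is equivalent to convergence in probability. The key observation is that the hypothesis $\Psi_n(\alpha)\le h_n$ sandwiches $h_n$ from below by something whose limit we understand, while $\E[h_n]\to t$ controls it from above in mean; combining both sides via the triangle inequality, with $\alpha$ as a free parameter to be sent to zero at the end, should do the job.

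First I would fix $\alpha>0$ and use bounded convergence (for convergence in probability of $[0,1]$-valued random variables) to pass $\Psi_n(\alpha)\to\Psi(\alpha)$ in probability to $\E[\Psi_n(\alpha)]\to\Psi(\alpha)$ and to $\E[|\Psi_n(\alpha)-\Psi(\alpha)|]\to 0$. Since $\Psi_n(\alpha)\le h_n$ almost surely, one has $\E[\Psi_n(\alpha)]\le \E[h_n]$, so letting $n\to\infty$ yields $\Psi(\alpha)\le t$, and moreover
\begin{equation*}
\E[|h_n-\Psi_n(\alpha)|] \;=\; \E[h_n-\Psi_n(\alpha)] \;\xrightarrow[n\to\infty]{}\; t-\Psi(\alpha) \;\ge\; 0.
\end{equation*}

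Next I would apply the triangle inequality to write
\begin{equation*}
\E[|h_n-t|] \;\le\; \E[|h_n-\Psi_n(\alpha)|] + \E[|\Psi_n(\alpha)-\Psi(\alpha)|] + |\Psi(\alpha)-t|,
\end{equation*}
and take $\limsup$ in $n$ with $\alpha$ held fixed to obtain $\limsup_n \E[|h_n-t|]\le 2(t-\Psi(\alpha))$. The parameter $\alpha$ played no role in the left-hand side, so I can now use the hypothesis $\Psi(\alpha)\to t$ as $\alpha\to 0^+$ to conclude $\E[|h_n-t|]\to 0$. Markov's inequality then yields $h_n\to t$ in probability.

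I do not expect any serious obstacle here: the statement is genuinely elementary once one recognizes that both $h_n$ and $\Psi_n(\alpha)$ take values in $[0,1]$, so that convergence in probability upgrades automatically to $L^1$ convergence, and that the two one-sided pieces of information (pointwise lower bound and mean upper bound) are exactly what is needed to squeeze $h_n$ to $t$. The only subtlety worth being careful about is the correct order of limits, namely that $n\to\infty$ must be taken before $\alpha\to 0^+$, which is why the triangle-inequality decomposition keeps $\alpha$ as an external parameter throughout.
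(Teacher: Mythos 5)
Your proof is correct, and it is a cleaner route than the one in the paper, though both rest on the same two-sided squeeze. The paper works with deviation probabilities: for fixed $\delta$ it picks $\alpha$ with $|\Psi(\alpha)-t|<\delta/2$, shows $\P(h_n<\E[h_n]-\delta)\to 0$ by noting that the event $\{h_n<\E[h_n]-\delta\}\cap\{|\Psi_n(\alpha)-\E[h_n]|\le\delta\}$ is null (it would force $\Psi_n(\alpha)>h_n$), and then feeds this into a somewhat fiddly indicator decomposition of $\E[|h_n-\E[h_n]|]$. Your argument short-circuits all of this by observing that $h_n-\Psi_n(\alpha)\ge 0$ almost surely, so that $\E[|h_n-\Psi_n(\alpha)|]=\E[h_n]-\E[\Psi_n(\alpha)]\to t-\Psi(\alpha)$ once bounded convergence upgrades $\Psi_n(\alpha)\to\Psi(\alpha)$ to $L^1$; the triangle inequality then gives $\limsup_n\E[|h_n-t|]\le 2(t-\Psi(\alpha))$ with $\alpha$ free to send to zero afterwards. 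The order of limits is handled correctly ($n\to\infty$ before $\alpha\to 0^+$), the inequality $\Psi(\alpha)\le t$ is properly derived rather than assumed, and the final passage to convergence in probability via Markov is immediate. What your approach buys is brevity and the slightly stronger conclusion $\E[|h_n-t|]\to 0$ stated explicitly; what the paper's approach buys is essentially nothing extra here, since it too ends by controlling $\E[|h_n-\E[h_n]|]$.
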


\begin{remark*}\normalfont
Lemma~\ref{lemma:XnYn} is an asymptotic version of the well known fact that a random variable $h$ that satisfies $h\ge t\in\R$ and $\E[h] = t$ must be equal to $t$, almost surely.
\end{remark*}

\begin{proof}[Proof of Lemma~\ref{lemma:XnYn}]
Fix $\delta>0$, choose $\alpha=\alpha(\delta)>0$ such that $|\Psi(\alpha) - t| < \delta/2$ and do the following standard bound,
\begin{align*}
\P(h_n < \E[h_n] - \delta) \quad&\le\quad \P(|\Psi_n(\alpha) - \E[h_n]|> \delta) \\
&\quad\quad+ \P(h_n < \E[h_n] - \delta, \;|\Psi_n(\alpha) - \E[h_n]|\le \delta).
\end{align*}
But $|\Psi_n(\alpha) - \E[h_n]| \le |\Psi_n(\alpha)-\Psi(\alpha)| + |\Psi(\alpha) - t| + |t - \E[h_n]| \le \delta/2 + o_\P(1)$, whereas $|\Psi_n(\alpha) - \E[h_n]|\le \delta$ and $h_n < \E[h_n] - \delta$ together imply that $\Psi_n(\alpha) \ge \E[h_n] - \delta > h_n$, which, by assumption, happens only on a set of probability zero. Therefore, the upper bound in the previous display converges to zero. Now, by boundedness of $h_n$ we have
\begin{align*}
\E[|h_n - \E[h_n]|] \quad&\le\quad \E[|h_n - \E[h_n] + \delta|] + \delta \\
&\le\quad \E[(h_n - (\E[h_n] - \delta))\mathbf{1}_{\left\{ h_n \ge \E[h_n] - \delta\right\}}] \\
&\quad\quad+ \P(h_n < \E[h_n] - \delta) +  \delta\\
&\le\quad \E[h_n\mathbf{1}_{\left\{ h_n \ge \E[h_n] - \delta\right\}}] - \E[h_n]\P(h_n \ge \E[h_n] - \delta) \\
&\quad\quad + 2\delta + o(1),
\end{align*}
and we also see that both $\E[h_n\mathbf{1}_{\left\{ h_n \ge \E[h_n] - \delta\right\}}]$ and $\E[h_n]\P(h_n \ge \E[h_n] - \delta)$ converge to $t$. Since $\delta>0$ was arbitrary, we must have $\limsup \E[|h_n - \E[h_n]|] = 0$ and thus, convergence in probability of $h_n$ to $t$.
\end{proof}


\begin{lemma}
\label{lemma:moments}
For every $n\in\N$, let $x_{1,n},\dots,x_{n,n}$ be i.i.d. random $p_n$-vectors satisfying $x_{i,n} = \mu_n + \Gamma_n z_{i,n}$ as in Assumption~\ref{a.design}.(\ref{a.factor}) with positive semidefinite covariance matrix $\Sigma_n = \Gamma_n\Gamma_n'$. Set $X_n = [x_{1,n},\dots, x_{n,n}]'$, $\hat{\Sigma}_n = X_n'(I_n-P_\iota)X_n/n$ and 
$$
S_n = \E\left[\begin{pmatrix} 1 \\ x_1\end{pmatrix}\begin{pmatrix} 1 &x_1'\end{pmatrix}\right]
= \begin{pmatrix} 1 &\mu_n'\\ \mu_n &\Sigma_n + \mu_n\mu_n'\end{pmatrix}
=
\begin{pmatrix} 0 &0 \\ 0 &\Sigma_n\end{pmatrix} + \begin{pmatrix} 1 \\ \mu_n\end{pmatrix}\begin{pmatrix} 1 &\mu_n'\end{pmatrix}.
$$
Moreover, let $R_n$ be a $k_n \times (p_n+1)$ matrix such that $R_nR_n' = I_{k_n}$ (i.e., $k_n\le p_n+1$) and set $\Omega_n = R_nS_nR_n'$.

\begin{enumerate}
	\setlength\leftmargin{-20pt}
	\renewcommand{\theenumi}{(\roman{enumi})}
	\renewcommand{\labelenumi}{{\theenumi}} 

\item \label{l:mom:lin} Let $u_n\in\R^{p_n+1}$. If $\sup_{\|w\|=1}\E[|w'z_{1,n}|^\ell] = O(1)$ as $n\to\infty$, for some fixed $\ell\in\N$, not depending on $n$, then $\E[|u_n'(1,x_{1,n}')'|^\ell] = O(|u_n'S_nu_n|^{\ell/2})$ as $n\to\infty$.

\item \label{l:mom:cov} Let $v_{n,1}, v_{n,2}\in\R^{p_n}$. If $\sup_{\|w\|=1}\E[|w'z_{1,n}|^4] = O(1)$ as $n\to\infty$, then $\Var[\sqrt{n}v_{n,1}'\hat{\Sigma}_n v_{n,2}] =O(v_{n,1}'\Sigma v_{n,1}v_{n,2}'\Sigma v_{n,2})$.

\item \label{l:mom:consistency} If $\Sigma_n$ is positive definite, $z_{1,n}$ satisfies \ref{a.design}.(\ref{a.Srivastava}) and $\sup_{\|w\|=1}\E[|w'z_{1,n}|^4] = O(1)$, then the design matrix of the transformed data $W_n = [\iota,X]R_n'$ satisfies
$$
\left\|\Omega_n^{-1/2}(W_n'W_n/n)\Omega_n^{-1/2} - I_{k_n}\right\|_S =
\begin{cases}
o_\P(1), &\text{if } k_n/n \to 0,\\
O_\P(1), &\text{if } k_n = O(n).
\end{cases}
$$

\item \label{l:mom:quad1} If $\Sigma_n$ is positive definite and $\Var[z_{1,n}'M z_{1,n}] = O(\trace{M^2}) + (\trace{M})^2o(1)$, as $n\to \infty$, for every symmetric matrix $M\in\R^{m_n\times m_n}$, then we have $\E[|(1,x_{1,n}')R_n'\Omega_n^{-1}R_n(1,x_{1,n}')'|^2] = O(k_n^2)$.

\item \label{l:mom:quad2} Suppose that $\Sigma_n$ is positive definite and that $\sup_{\|w\|=1}\E[|w'z_{1,n}|^8] = O(1)$ and $(\E[|z_{1,n}'Pz_{1,n}|^4])^{1/4} = O(\|P\|_F^2)$, as $n\to\infty$, for every projection matrix $P$ in $\R^{m_n}$, and partition $R_n = [t_1, T_1]$ with $t_1\in\R^{k_n}$. If for every $n\in\N$ either one of $(a)$ $\mu_n=0$, or $(b)$ $\rank{T_1}=k_n$ holds, then $\E[|(1,x_{1,n}')R_n'\Omega_n^{-1}R_n(1,x_{1,n}')'|^4] = O(k_n^4)$.

\end{enumerate}
\end{lemma}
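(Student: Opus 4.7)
The proof rests on a common decomposition. Writing $R_n = [r_0, R_1]$ with $r_0\in\R^{k_n}$ and $R_1\in\R^{k_n\times p_n}$, one has $w_i := R_n(1,x_i')' = \bar{m} + \Lambda z_i$ with $\bar{m} := r_0 + R_1\mu_n$ and $\Lambda := R_1\Gamma_n$, so that $\Omega_n = \bar{m}\bar{m}' + \Lambda\Lambda'$. The whitened quantities $\tilde{m}:=\Omega_n^{-1/2}\bar{m}$ and $\tilde{\Lambda}:=\Omega_n^{-1/2}\Lambda$ satisfy the key identity
\[
\tilde{m}\tilde{m}' + \tilde{\Lambda}\tilde{\Lambda}' = I_{k_n},
\]
which yields the uniform bounds $\|\tilde{m}\|\le 1$ and $\|\tilde{\Lambda}\|_S\le 1$ used throughout.

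Parts \ref{l:mom:lin}, \ref{l:mom:cov} and \ref{l:mom:quad1} are moment calculations on top of this decomposition. For \ref{l:mom:lin}, I would split $u'(1,x_1')' = (u_0 + u_1'\mu_n) + (\Gamma_n'u_1)'z_1$ and apply Minkowski in $L^\ell$ together with the marginal moment hypothesis to get $(\E|u'(1,x_1')'|^\ell)^{1/\ell} = O(|u_0+u_1'\mu_n| + \|\Gamma_n'u_1\|) = O((u'S_n u)^{1/2})$, using $u'S_n u = (u_0+u_1'\mu_n)^2 + u_1'\Sigma_n u_1$. For \ref{l:mom:cov}, the observation $\hat{\Sigma}_n = \Gamma_n Z_n'(I_n-P_\iota)Z_n\Gamma_n'/n$ makes $\mu_n$ drop out; with $\tilde\eta_i := z_i'\Gamma_n'v_1$ and $\tilde\zeta_i := z_i'\Gamma_n'v_2$ (centered with variances $v_j'\Sigma_n v_j$ and bounded fourth moments by the same Minkowski estimate), writing $v_1'\hat{\Sigma}_n v_2 = \frac{1}{n}\sum_i(\tilde\eta_i-\bar{\tilde\eta})(\tilde\zeta_i-\bar{\tilde\zeta})$ and bounding variances via Cauchy--Schwarz gives $\Var[v_1'\hat{\Sigma}_n v_2] = O((v_1'\Sigma_n v_1)(v_2'\Sigma_n v_2)/n)$. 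For \ref{l:mom:quad1}, $w_1'\Omega_n^{-1}w_1 = \|\tilde w_1\|^2 = \|\tilde m\|^2 + 2\tilde m'\tilde\Lambda z_1 + z_1'Mz_1$ with $M:=\tilde\Lambda'\tilde\Lambda$; since the eigenvalues of $M$ lie in $[0,1]$ one has $\trace(M^2)\le \trace(M)\le k_n$, so the variance hypothesis yields $\E[(z_1'Mz_1)^2] = O(k_n^2)$, while $\|\tilde m\|\le 1$ and $\E[(\tilde m'\tilde\Lambda z_1)^2] = \tilde m'\tilde\Lambda\tilde\Lambda'\tilde m\le 1$ dispose of the remaining terms.

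For \ref{l:mom:quad2}, expanding $\|\tilde w_1\|^8$ reduces the analysis to $\E[(z_1'Mz_1)^4]$. Taking the compact SVD $\tilde\Lambda = UDV'$ of rank $r\le k_n$, the bound $D^2\preceq I_r$ forces $M = VD^2V'\preceq VV'=:P$, a projection of rank $r$; hence $z_1'Mz_1\le z_1'Pz_1$ and the projection moment hypothesis yields $\E[(z_1'Pz_1)^4] = O(\|P\|_F^8) = O(k_n^4)$. The cross term $\E[(\tilde m'\tilde\Lambda z_1)^4] = O((\tilde m'\tilde\Lambda\tilde\Lambda'\tilde m)^2) = O(1)$ is handled by the eighth-moment bound on linear combinations. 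The case distinction (a)/(b) enters only to guarantee a robust structural description of $\Omega_n$: in (b), $\rank T_1 = k_n$ forces $R_1\Sigma_n R_1'\succ 0$ directly, whereas in (a) the setting $\mu_n=0$ allows one to work with $\bar m = r_0$ alone.

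The main obstacle is part \ref{l:mom:consistency}. Starting from
\[
\Omega_n^{-1/2}\left(\frac{W_n'W_n}{n} - \Omega_n\right)\Omega_n^{-1/2} = \tilde\Lambda\left(\frac{Z_n'Z_n}{n}-I_{m_n}\right)\tilde\Lambda' + \tilde m\bar z'\tilde\Lambda' + \tilde\Lambda\bar z\tilde m',
\]
with $\bar z = \sum_i z_i/n$, the SVD $\tilde\Lambda = UDV'$ with $\|D\|_S\le 1$ bounds the spectral norm of the leading term by $\|V'Z_n'Z_nV/n - I_r\|_S$. The rows $V'z_i$ of $Z_nV$ are i.i.d.\ in $\R^r$ with identity covariance, they inherit the bounded fourth moment from \ref{a.design}.(\ref{a.moments1}), and crucially they inherit Assumption~\ref{a.design}.(\ref{a.Srivastava}): for any projection $\tilde P$ in $\R^r$ the lifted operator $V\tilde PV'$ is a projection in $\R^{m_n}$ of the same rank, so the tail condition transfers verbatim with the same universal constants. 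Hence the spectral-norm bound of \citet{Sriva13} applies with working dimension $r\le k_n$ in place of $m_n$, giving $o_\P(1)$ when $k_n/n\to 0$ and $O_\P(1)$ when $k_n = O(n)$. The cross terms have spectral norm at most $2\|\tilde m\|\|\tilde\Lambda\bar z\|\le 2\|V'\bar z\|$, and $\E\|V'\bar z\|^2 = r/n$ matches the target regime.
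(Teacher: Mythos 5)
Your proposal is correct, and while parts \ref{l:mom:lin}, \ref{l:mom:cov} and \ref{l:mom:quad1} follow essentially the same computations as the paper (Minkowski in place of the elementary $|a+b|^\ell$ bound, a direct variance computation for $\hat{\Sigma}_n$ after observing that $\mu_n$ drops out, and the bound $\trace{M^2}\le\trace{M}\le k_n$ for $M=\tilde\Lambda'\tilde\Lambda$), your treatment of parts \ref{l:mom:consistency} and \ref{l:mom:quad2} is organized differently. For \ref{l:mom:consistency}, the paper first splits $W_n'W_n/n$ into $\hat\Sigma_W$ plus the rank-one term $R(1,\hat\mu')'(1,\hat\mu')R'$, rotates by an orthogonal matrix built from $\bar\mu=\Omega^{-1/2}R(1,\mu')'$, and controls each block of the resulting partitioned matrices separately (with a somewhat lengthy combinatorial variance computation for the $\hat\mu$-block); you instead write the whitened deviation in one line as $\tilde\Lambda(Z_n'Z_n/n-I_{m_n})\tilde\Lambda'$ plus two rank-one cross terms, reduce the leading term to dimension $r=\rank{\tilde\Lambda}\le k_n$ via the compact SVD, and dispatch the cross terms with $\E\|V'\bar z\|^2=r/n$. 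The essential ingredient is identical in both versions --- the result of \citet{Sriva13} applied after lifting projections through a matrix with orthonormal columns so that Assumption~\ref{a.design}.(\ref{a.Srivastava}) transfers with the same constants (the paper's $\Gamma'T_1'B_1$ plays the role of your $V$) --- but your bookkeeping is leaner. For \ref{l:mom:quad2} the difference is more substantive: the paper distinguishes the cases $(a)$ and $(b)$ because its bound on $\E[|z_1'Mz_1|^4]$ goes through either a generalized Schur complement of a projection (case $(a)$, citing Baksalary--Trenkler) or the Sherman--Morrison formula (case $(b)$, which needs $T_1\Sigma T_1'$ invertible and hence $\rank{T_1}=k_n$), whereas your domination $M=VD^2V'\preceq VV'$ by a rank-$r$ projection, valid since $\|\tilde\Lambda\|_S\le1$, feeds directly into the fourth-moment hypothesis for projections and makes the case distinction unnecessary. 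This is a genuine simplification (and a mild strengthening, since the conclusion then holds without $(a)$ or $(b)$); the only thing the paper's route buys is that it never needs to observe that $\Omega_n$ is automatically invertible once $\Sigma_n\succ0$, a fact your argument uses but which is immediate from $u'S_nu=u_1'\Sigma_nu_1+(u_0+u_1'\mu_n)^2$.
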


\begin{proof}
For ease of notation we will drop the subscript $n$ whenever there is no risk of confusion. A simple calculation involving the elementary inequality $|a+b|^\ell \le 2^{\ell-1}(|a|^\ell+|b|^\ell)$ and the notation $u_n = u = (u_0,u_{-1}')'$, with $u_{-1}\in\R^{p_n}$, yields
\begin{align*}
&\E[|u'(1,x_1')'|^\ell] = \E[|u'(1,\mu')' + u'(0, z_1'\Gamma')'|^\ell] \\
&\le
2^{\ell-1}\E\left[|u'(1,\mu')'|^\ell + |u_{-1}'\Gamma z_1|^\ell\right]\\
&=
2^{\ell-1} \left( |u'(1,\mu')'(1,\mu')u|^{\ell/2} + |u_{-1}'\Sigma u_{-1}|^{\ell/2}\E\left[|w' z_1|^\ell \right]
\right),
\end{align*}
where $w = \Gamma' u_{-1}/\|\Gamma' u_{-1}\|$, if $\|\Gamma' u_{-1}\|>0$ and $w=0$, else. In the sum $u'(1,\mu')'(1,\mu')u + u_{-1}'\Sigma u_{-1} = u'S_nu$ both summands are non-negative and thus both summands are bounded by $u'S_nu$. Therefore, the upper bound in the previous display is itself bounded by a constant multiple of $|u' S_n u|^{\ell/2}$. This was the claim of part~\ref{l:mom:lin}.

For part~\ref{l:mom:cov}, first note that because the distribution of $\hat{\Sigma}_n$ does not depend on $\mu$, we may assume that $\mu = 0$, without loss of generality. By the same argument as above but with $\mu=0$, $u_0=0$ and $u_{-1}$ is either $v_{n,1}$ or $v_{n,2}$, we see that $\E[|v_{n,s}'x_1|^4] =O(|v_{n,s}'\Sigma v_{n,s}|^2)$, $s=1,2$. Now
\begin{align*}
\Var[\sqrt{n}&v_{n,1}'\hat{\Sigma}_n v_{n,2}] = n \Var[v_{n,1}'X'Xv_{n,2}/n - v_{n,1}'X'\iota\iota'Xv_{n,2}/n^2]\\
&\le 2n\left(
\frac{1}{n^2}\sum_{i=1}^n\Var[v_{n,1}'x_i v_{n,2}'x_i] + \frac{1}{n^4}\Var\left[ \sum_{i,j=1}^n v_{n,1}'x_i v_{n,2}'x_j\right] 
\right)\\
&\le
2\sqrt{\E[|v_{n,1}'x_1|^4]\E[|v_{n,2}'x_1|^4]} + \frac{2}{n^3} \sum_{i,j,k,l=1}^n \E[v_{n,1}'x_i v_{n,2}'x_j v_{n,1}'x_k v_{n,2}'x_l]\\
&=
O(v_{n,1}'\Sigma v_{n,1}v_{n,2}'\Sigma v_{n,2}) + \frac{2}{n^2}  \E[|v_{n,1}'x_1|^2|v_{n,2}'x_1|^2] \\
&\quad+ \frac{4}{n^3} \sum_{i\ne j} \E[v_{n,1}'x_iv_{n,2}'x_i] \E[v_{n,1}'x_jv_{n,2}'x_j] \\
&\quad+
\frac{2}{n^3} \sum_{i\ne j} \E[|v_{n,1}'x_i|^2] \E[|v_{n,2}'x_j|^2],
\end{align*}
which is of order $O(v_{n,1}'\Sigma v_{n,1}v_{n,2}'\Sigma v_{n,2})$ because $\E[|v_{n,s}'x_1|^2] = v_{n,s}'\Sigma v_{n,s}$ and $\E[v_{n,1}'x_1v_{n,2}'x_1] \le \sqrt{\E[|v_{n,1}'x_1|^2] \E[|v_{n,2}'x_1|^2]}$.

For parts~\ref{l:mom:consistency}, \ref{l:mom:quad1} and \ref{l:mom:quad2} we make the following preliminary considerations. First, note that in all three of these statements $\Sigma$ is assumed to be positive definite and thus $\Omega$ is regular. Abbreviate $\bar{\mu}':= (1,\mu')R'\Omega^{-1/2}$ and 
$$
\Sigma_W = R\begin{pmatrix}
0 &0\\ 0 &\Sigma
\end{pmatrix} R'.
$$
Since $\Sigma_W =  \Omega - R(1,\mu')'(1,\mu')R'$ we have 
\begin{align}\label{eq:muDiag}
\Omega^{-1/2}  \Sigma_W \Omega^{-1/2}
= 
I_{k_n} - \bar{\mu}\bar{\mu}' 
= A \begin{pmatrix}
1-\|\bar{\mu}\|^2 & 0\\
0 &I_{k_n-1}
\end{pmatrix}A',
\end{align}
for some orthogonal matrix $A$ whose first column is $\bar{\mu}/\|\bar{\mu}\|$ if $\|\bar{\mu}\|>0$, and $A=I_{k_n}$ if $\bar{\mu}=0$. Here, quantities of dimension $k_n-1$ have to be removed in case $k_n=1$. The matrix $\Omega^{-1/2}  \Sigma_W \Omega^{-1/2}$ in the previous display is positive semidefinite, which means that $0\le \|\bar{\mu}\|\le 1$. For later use, we partition the matrix $B := \Omega^{-1/2} A$ as $B = [b_1, B_1]$ where $b_1 \in\R^{k_n}$ and note that $B$ satisfies
\begin{align}
B'\Sigma_W B = \begin{pmatrix}
1-\|\bar{\mu}\|^2 & 0\\
0 &I_{k_n-1}
\end{pmatrix},\label{eq:SigmaW}
\end{align}
and
$I_{k_n} = B'\Omega B = B'\Sigma_W B + B'R(1,\mu')'(1,\mu')R'B$, which entails that
\begin{align}
B'R(1,\mu')'(1,\mu')R'B = \begin{pmatrix}
\|\bar{\mu}\|^2 & 0\\
0 &0
\end{pmatrix}.\label{eq:Rmu}
\end{align}
This finishes the preliminary considerations.

Now, for the proof of part~\ref{l:mom:consistency}, write the quantity of interest as
\begin{align}
&\left\|
\Omega^{-1/2}(W'W/n)\Omega^{-1/2} - I_{k_n}
\right\|_S
=
\left\|
A'\Omega^{-1/2}(W'W/n)\Omega^{-1/2}A - A'A
\right\|_S\notag\\
&\quad=\left\|
B'(W'W/n)B - I_{k_n}
\right\|_S
\le
\left\|
B'\hat{\Sigma}_WB  - \begin{pmatrix}
1-\|\bar{\mu}\|^2 & 0\notag\\
0 &I_{k_n-1}
\end{pmatrix}
\right\|_S\\
&\quad\quad+
\left\|
B'R(1,\hat{\mu}')'(1,\hat{\mu}')R'B - 
\begin{pmatrix}
\|\bar{\mu}\|^2 & 0\notag\\
0 &0
\end{pmatrix}
\right\|_S\notag\\
&\quad=
\left\|
	\begin{pmatrix}
	b_1'\hat{\Sigma}_Wb_1 - (1-\|\bar{\mu}\|^2) &b_1'\hat{\Sigma}_WB_1\\
	B_1'\hat{\Sigma}_Wb_1 & B_1'\hat{\Sigma}_WB_1 - I_{k_n-1}
	\end{pmatrix}
\right\|_S \label{eq:SigmaWNorm}\\
&\quad\quad +
\left\|
	\begin{pmatrix}
	b_1'R(1,\hat{\mu}')'(1,\hat{\mu}')R'b_1 - \|\bar{\mu}\|^2 &b_1'R(1,\hat{\mu}')'(1,\hat{\mu}')R'B_1\\
	B_1'R(1,\hat{\mu}')'(1,\hat{\mu}')R'b_1 &B_1'R(1,\hat{\mu}')'(1,\hat{\mu}')R'B_1
	\end{pmatrix}
\right\|_S. \label{eq:RmuNorm}
\end{align}
where 
$$\hat{\Sigma}_W = R\begin{pmatrix} 0 &0\\ 0 &\hat{\Sigma}_n\end{pmatrix}R' \quad\text{and}\quad \hat{\mu}=X'\iota/n.$$
For a partitioned matrix as above we have
\begin{align*}
&\left\|\begin{pmatrix}
c_{11} &c_{12}'\\
c_{21} &C_{22}
\end{pmatrix}
\right\|_S^2 
= \sup_{\|w\|=1}
\left\|\begin{pmatrix}
c_{11}w_1 + c_{12}w_{-1} \\
c_{21} w_1 +C_{22}w_{-1}
\end{pmatrix}
\right\|^2 \\
&\quad\le
(|c_{11}|+\|c_{12}\|)^2 + (\|c_{21}\|+\|C_{22}\|_S)^2.
\end{align*}
Therefore, it suffices to show that the norms of the respective blocks are $O_\P(1)$, if $k_n = O(n)$, and converge to zero in probability, if $k_n/n\to 0$. 

We begin with the terms involving $\hat{\mu}$ in \eqref{eq:RmuNorm}. First,
\begin{align*}
&\E\left[ b_1'R(1,\hat{\mu}')'(1,\hat{\mu}')R'b_1\right] -\|\bar{\mu}\|^2\\
&\quad= 
b_1'R\left[\begin{pmatrix}
0 &0\\
0 &\Sigma/n
\end{pmatrix} +
\begin{pmatrix}1 \\ \mu  \end{pmatrix}\begin{pmatrix}1 &\mu'  \end{pmatrix}\right]R'b_1 - \|\bar{\mu}\|^2\\
&\quad= (1-\|\bar{\mu}\|^2)/n \xrightarrow[n\to\infty]{} 0,
\end{align*}
in view of $\E[\hat{\mu}\hat{\mu}'] = \Sigma/n + \mu\mu'$, \eqref{eq:SigmaW} and \eqref{eq:Rmu}. Moreover, the variance satisfies
\begin{align*}
&\Var\left[\left(b_1'R(1,\hat{\mu}')'\right)^2\right] 
=
\frac{1}{n^4}\Var\left[
\sum_{i,j=1}^n b_1'R(1,x_i')' b_1'R(1,x_j')'
\right] \\
&\quad=
\frac{1}{n^4} \Var\left[
\sum_{i\ne j}^n b_1'R[(1,x_i')'(1,x_j') - (1,\mu')'(1,\mu')]R'b_1 \right.\\
&\hspace{4cm}\left.+
\sum_{i=1}^n b_1'R[(1,x_i')'(1,x_i') - S]R'b_1
\right]\\
&\quad\le
\frac{2}{n^4} \left(
	\sum_{i\ne j}^n\sum_{r\ne s}^n \E[b_1'R[(1,x_i')'(1,x_j') - (1,\mu')'(1,\mu')]R'b_1 \times \right.\\
		&\hspace{4cm} b_1'R[(1,x_r')'(1,x_s') - (1,\mu')'(1,\mu')]R'b_1]  \\
		&\hspace{2cm}\left. +\sum_{i=1}^n \E[(b_1'R(1,x_i')'(1,x_i')R'b_1)^2]
\right).
\end{align*}
To work out the combinatorics of the quadruple sum above, abbreviate $F_{ij} = b_1'R[(1,x_i')'(1,x_j') - (1,\mu')'(1,\mu')]R'b_1$ and note that $\E[F_{ij}] = 0$ if $i\ne j$ and $\E[F_{ij}F_{rs}] = 0$ if all four indices are distinct. Moreover, there are only $O(n^3)$ summands in which not all four indices are distinct, i.e., there are only $O(n^3)$ non-zero summands. Moreover, the non-zero summands can always be bounded by 
\begin{align*}
|E[F_{ij}F_{rs}]| &\le \sqrt{\E[F_{ij}^2]\E[F_{rs}^2]} = \E[F_{ij}^2] = \Var[F_{ij}] = \Var[b_1'R(1,x_i')'(1,x_j')R'b_1] \\
&\le \E[((1,x_i')R'b_1)^2((1,x_j')R'b_1)^2] = (\E[((1,x_1')R'b_1)^2])^2,
\end{align*}
if $i\ne j$ and $r\ne s$. Since $\E[(b_1'R(1,x_1')')^2] = b_1'RSR'b_1 =b_1'\Omega b_1 = 1$, by definition of $B$, we see that the quadruple sum in the second-to-last display is of order $O(n^3)$. The remaining sum in the same display is of order $O(n)$, since $\E[(b_1'R(1,x_1')')^4] = O(1)$, by part~\ref{l:mom:lin} and the assumption $\sup_{\|w\|=1} \E[|w'z_1|^4] = O(1)$. Thus, we have shown that $b_1'R(1,\hat{\mu}')'(1,\hat{\mu}')R'b_1 - \|\bar{\mu}\|^2 \to 0$, in probability. 

Next, consider $\|b_1'R(1,\hat{\mu}')'(1,\hat{\mu}')R'B_1\|^2 \le |b_1'R(1,\hat{\mu}')'|^2 \|B_1'R(1,\hat{\mu}')'\|^2$. The first factor in the upper bound was just shown to be $O_\P(1)$. For the second factor note that $\E[\|B_1'R(1,\hat{\mu}')'\|^2] = \trace{(B_1'\Sigma_WB_1/n + B_1'R(1,\mu')'(1,\mu')R'B_1)} = (k_n-1)/n$, by \eqref{eq:SigmaW} and \eqref{eq:Rmu}. Since $\|B_1'R(1,\hat{\mu}')'(1,\hat{\mu}')R'B_1\|_S = \|B_1'R(1,\hat{\mu}')'\|^2$, we see that the spectral norm in \eqref{eq:RmuNorm} is $O_\P(1)$ if $k_n=O(n)$, and converges to zero in probability, if $k_n/n\to 0$.

For the spectral norm in \eqref{eq:SigmaWNorm}, we may restrict to $\mu=0$. First, write $R = [t_1,T_1]$ with $t_1\in\R^{k_n}$ and use \eqref{eq:SigmaW} to see that $\E[b_1'\hat{\Sigma}_Wb_1] - (1-\|\bar{\mu}\|^2) = b_1'\Sigma_Wb_1(n-1)/n - (1-\|\bar{\mu}\|^2) = (1-\|\bar{\mu}\|^2)/n \to 0$, whereas $\Var[b_1'\hat{\Sigma}_Wb_1] = \Var[b_1'T_1\hat{\Sigma}_nT_1'b_1] \to 0$, in view of the result in part~\ref{l:mom:cov} with $v_n = v_{n,1}=v_{n,2}= T_1'b_1/n^{1/4}$, which satisfies $v_n'\Sigma v_n = b_1'\Sigma_Wb_1/\sqrt{n} = (1-\|\bar{\mu}\|^2)/\sqrt{n}\to 0$. For the off-diagonal block $B_1'\hat{\Sigma}_Wb_1$, note that it has mean zero in view of \eqref{eq:SigmaW}. Therefore, $\E[\|B_1'\hat{\Sigma}_Wb_1\|^2] = \sum_{j=1}^{k_n-1} \E[(e_j'B_1'\hat{\Sigma}_Wb_1)^2] = \sum_{j=1}^{k_n-1} \Var[e_j'B_1'\hat{\Sigma}_Wb_1]$, where $e_1,\dots, e_{k_n-1}$ is the standard basis in $\R^{k_n-1}$. Now, $\Var[e_j'B_1'\hat{\Sigma}_Wb_1] = \Var[e_j'B_1'T_1\hat{\Sigma}_nT_1'b_1]$, and part~\ref{l:mom:cov} applies with $v_{n,1} = T_1'B_1e_j/n^{1/4}$ and $v_{n,2} = T_1'b_1/n^{1/4}$, which satisfy $v_{n,1}'\Sigma v_{n,1} = e_j'B_1'\Sigma_W B_1e_j /\sqrt{n} = 1/\sqrt{n}$ and $v_{n,2}'\Sigma v_{n,2} = b_1'\Sigma_W b_1/\sqrt{n} = (1-\|\bar{\mu}\|^2)/\sqrt{n}$, in view of \eqref{eq:SigmaW}. Therefore, $\E[\|B_1'\hat{\Sigma}_Wb_1\|^2] = \sum_{j=1}^{k_n-1} O(1/n) = O(k_n/n)$. Hence, the only remaining term is $\|B_1'\hat{\Sigma}_WB_1 - I_{k_n-1}\|_S \le \| \frac{1}{n}\sum_{i=1}^nB_1'T_1x_ix_i'T_1'B_1 - I_{k_n-1}\|_S + \|B_1'T_1\hat{\mu}\hat{\mu}'T_1'B_1\|_S$. For the second term in the upper bound, one easily finds its expected value to be $(k_n-1)/n$, as in the previous paragraph. For the spectral norm of the remaining covariance term we verify the strong regularity (SR) condition of \citet[Theorem 1.1]{Sriva13} for the random $(k_n-1)$-vectors $\bar{x}_i = B_1'T_1x_i = B_1'T_1\Gamma z_i$. First, note that the $\bar{x}_i$ are independent and isotropic, since $\mu=0$ and $\E[\bar{x}_i\bar{x}_i'] = B_1'T_1 \Sigma T_1' B_1 = B_1'\Sigma_W B_1 = I_{k_n-1}$. Fix a projection matrix $P$ in $\R^{k_n-1}$ and note that $\Gamma'T_1'B_1PB_1'T_1\Gamma$ is a projection matrix in $\R^{m_n}$ of the same rank as $P$. Since the $z_i$ satisfy Assumption~\ref{a.design}.(\ref{a.Srivastava}) and $\|P\bar{x}_1\|^2 = \|PB_1'T_1\Gamma z_1\|^2 = z_1'\Gamma'T_1'B_1PB_1'T_1\Gamma z_1 = \|\Gamma'T_1'B_1PB_1'T_1\Gamma z_1\|^2$, we see that the (SR) condition holds for $\bar{x}_1$ and with the same constants $c,C$ as in \ref{a.design}.(\ref{a.Srivastava}). Therefore, Corollary~1.4 of \citet{Sriva13} shows that $\| \frac{1}{n}\sum_{i=1}^nB_1'T_1x_ix_i'T_1'B_1 - I_{k_n-1}\|_S$ is $O_\P(1)$ if $k_n=O(n)$, and converges to zero, in probability, if $k_n/n\to 0$. This finishes part~\ref{l:mom:consistency}.

For the proof of parts~\ref{l:mom:quad1} and \ref{l:mom:quad2}, take $\ell \in\N$ and consider the elementary bound
\begin{align}
\E[|(1,x_1')&R'\Omega^{-1}R(1,x_1')'|^\ell] 
= \E\left[\left|(1,\mu')R'\Omega^{-1}R(1,\mu')' \right.\right.\notag\\
&\quad\left.\left.+ 2(1,\mu')R'\Omega^{-1}R(0,z_1'\Gamma')' 
+ (0,z_1'\Gamma')R'\Omega^{-1}R(0,z_1'\Gamma')' \right|^\ell\right] \notag\\
&\le
2^{\ell-1} \left(
\|\bar{\mu}\|^{2\ell} + 2^{\ell-1}
	\left\{
		2^{\ell}\E\left[\left|\bar{\mu}'\Omega^{-1/2}R(0,z_1'\Gamma')'\right|^\ell\right]\right.\right.\label{eq:a1ell}\\
		&\quad\left.\left.+
		\E\left[\left|(0,z_1'\Gamma')R'\Omega^{-1}R(0,z_1'\Gamma')'\right|^\ell\right]
	\right\}\notag
\right).
\end{align}
Partition $R=[t_1,T_1]$ as above and abbreviate $M = \Gamma' T_1'\Omega^{-1}T_1 \Gamma$, so that the expectation on the last line of the previous display can be written as $\E[|z_1'Mz_1|^\ell]$. Now, if $\ell=2$, this can be evaluated as $\E[|z_1'Mz_1|^2] = \Var[z_1'Mz_1] + (\E[z_1'Mz_1])^2 = O(\trace{M^2}) + (\trace{M})^2o(1) + (\trace{M})^2$ under the assumption of part~\ref{l:mom:quad1}. Since $\trace{M} = \trace{\Omega^{-1/2}\Sigma_W\Omega^{-1/2}} = k_n-\|\bar{\mu}\|^2$ and $\trace{M^2} = k_n-1 + (1-\|\bar{\mu}\|^2)$, by \eqref{eq:muDiag}, we see that $\E[|z_1'Mz_1|^2] = O(k_n^2)$. Furthermore, $\E[|\bar{\mu}'\Omega^{-1/2}R(0,z_1'\Gamma')|^2] = \bar{\mu}'\Omega^{-1/2}\Sigma_W\Omega^{-1/2}\bar{\mu} = \|\bar{\mu}\|^2 - \|\bar{\mu}\|^4\le 1/4$, which finishes part~\ref{l:mom:quad1}.

For part~\ref{l:mom:quad2} we begin with the expectation in \eqref{eq:a1ell} with $\ell=4$, which can be written as
\begin{align*}
\E[|\bar{\mu}'\Omega^{-1/2}T_1\Gamma z_1|^4] &\le \|\bar{\mu}'\Omega^{-1/2}T_1\Gamma\|^4 \sup_{\|w\|=1}\E[|w'z_1|^4]\\
&= O(|\bar{\mu}'\Omega^{-1/2}\Sigma_W\Omega^{-1/2}\bar{\mu}|^2) = O(1).
\end{align*}
For $\E[|z_1'Mz_1|^4]$, we begin with case $(a)$ $\mu=0$. Then
\begin{align*}
\E[|z_1'Mz_1|^4] = \E\left[\left|
(0,z_1')\begin{pmatrix} 1 &0 \\ 0 &\Gamma'\end{pmatrix}R'\left(R\begin{pmatrix} 1 & 0\\ 0 &\Sigma\end{pmatrix}R'\right)^{-1}R
\begin{pmatrix} 1 &0 \\ 0 &\Gamma\end{pmatrix}(0,z_1')'\right|^4
\right],
\end{align*}
and we denote the matrix corresponding to the quadratic form in the vector $(0,z_1')'$ on the right-hand-side of this display by $P$. Clearly, $P$ is a projection matrix which we partition as
$$
P=\begin{pmatrix}
p_{11} &p_{21}'\\ p_{21} & P_{22}
\end{pmatrix},
$$
with $p_{11}\in[0,1]$. Exploiting the idempotency and symmetry of $P$, one can show that the generalized Schur complement of $p_{11}$ in $P$, i.e, the matrix $P_{22}-p_{21}p_{11}^\dagger p_{21}'$, is again a projection matrix \citep[cf.][Corollary 2.1]{Baksalary04}, where $p_{11}^\dagger= p_{11}^{-1}$, if $p_{11}\ne0$, and $p_{11}^\dagger = 0$, else.\footnote{\citet{Baksalary04} actually prove a more general result. The special case we are interested in here can also be easily derived by direct calculation.} Moreover, since $|\|P_{22}\|_S-\|p_{21}p_{11}^\dagger p_{21}'\|_S|\le \|P_{22} - p_{21}p_{11}^\dagger p_{21}'\|_S \le 1$ and $\|P_{22}\|_S\le\|P\|_S= 1$, we see that $\|p_{21}(p_{11}^\dagger)^{1/2}\|^2 = \|p_{21}p_{11}^\dagger p_{21}'\|_S \le 2$ and that the Frobenius norm of the generalized Schur complement satisfies $\|P_{22}-p_{21}p_{11}^\dagger p_{21}'\|_F^2 = \trace{(P_{22}-p_{21}p_{11}^\dagger p_{21}')} \le \trace{P_{22}} \le \trace{P} = k_n$. Therefore, using our assumptions, we calculate
\begin{align*}
&\E[|z_1'Mz_1|^4] = \E[|z_1'P_{22}z_1|^4] \\
&\quad\le 2^3\left(\E[|z_1'(P_{22} - p_{21}p_{11}^\dagger p_{21}')z_1|^4] + \E[|z_1'p_{21}p_{11}^\dagger p_{21}'z_1|^4]\right)\\
&\quad\le
2^3\left( O(k_n^4) + \E[|(p_{11}^\dagger)^{1/2}p_{21}'z_1|^8]\right) = O(k_n^4).
\end{align*}
Finally, in the case $(b)$, where $\rank{T_1}=k_n$, the matrix $T_1\Sigma T_1'$ in the representation $\Omega = R S R' = T_1\Sigma T_1' + R(1,\mu')'(1,\mu')R'$, is regular and thus we can invert $\Omega$ by the Sherman-Morrison formula to get
\begin{align*}
M &= \Gamma'T_1'\Omega^{-1}T_1\Gamma \\
&= 
\Gamma'T_1'(T_1\Sigma T_1')^{-1}T_1\Gamma 
-
\frac{\Gamma'T_1'(T_1\Sigma T_1')^{-1}R(1,\mu')'(1,\mu')'R'(T_1\Sigma T_1')^{-1}T_1\Gamma }{1 + (1,\mu')R'(T_1\Sigma T_1')^{-1}R(1,\mu')'}.
\end{align*}
Therefore, we make use of the abbreviations $P = \Gamma'T_1'(T_1\Sigma T_1')^{-1}T_1\Gamma $ and $v = \Gamma'T_1'(T_1\Sigma T_1')^{-1}R(1,\mu')'$ to bound the fourth moment of the quadratic form $z_1'M z_1$ by
\begin{align*}
\E[|z_1'M z_1|^4]
&\le
2^3\left(
\E[|z_1'Pz_1|^4]
+
	\E\left[\left(
		\frac{|v'z_1|^2}{1+\|v\|^2}
	\right)^4\right]
\right)\\
&\le
2^3\left( O(\|P\|_F^8) + \left(\frac{\|v\|^2}{1+\|v\|^2}\right)^4 \sup_{\|w\|=1}\E[|w'z_1|^8] \right).
\end{align*}
Since $\|P\|_F^8 = (\trace{P})^4 = k_n^4$, the upper bound is of order $O(k_n^4)$, which finishes the proof of part~\ref{l:mom:quad2}.
\end{proof}


\begin{lemma}
\label{lemma:Tmatrix}
Let $1\le q\le p$ be positive integers. If $T$ is a $(p+1)\times (p+1)$ orthogonal matrix that is partitioned as 
$$
T = \begin{bmatrix}
t_0 &T_0\\
t_1 &T_1
\end{bmatrix},
$$
where $t_0\in\R^q$, $t_1\in\R^{p+1-q}$, $T_0 \in\R^{q\times p}$ and $T_1\in\R^{(p+1-q)\times p}$, then $\|t_0\| > 0$ if and only if, $\rank{T_1} = p+1-q$.
\end{lemma}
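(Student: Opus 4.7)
The plan is to use the orthogonality of $T$ to translate the rank condition on $T_1$ into a statement about the left kernel of the $(p+1)\times p$ matrix
$$
A \;:=\; \begin{bmatrix} T_0 \\ T_1 \end{bmatrix},
$$
and then to relate that kernel to the first column $b := (t_0',t_1')'$ of $T$.

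First I would record the consequences of $T'T=I_{p+1}$ applied blockwise: the $p$ columns of $A$ are orthonormal, so $A'A = I_p$, and $b'A = t_0'T_0 + t_1'T_1 = 0$ with $\|b\|=1$. In particular $A$ has rank $p$ and its left kernel $\ker(A')\subseteq\R^{p+1}$ is exactly the one-dimensional subspace $\s\{b\}$.

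Now I would argue the two directions as follows. For the implication $\rank(T_1)<p+1-q\Longrightarrow t_0=0$, pick a unit vector $u\in\R^{p+1-q}$ with $u'T_1=0$ (such $u$ exists by the rank deficiency of $T_1$) and set $w:=(0',u')'\in\R^{p+1}$. Then $w'A=u'T_1=0$, so $w\in\ker(A')=\s\{b\}$. Since $u\ne 0$, we get $w=\alpha b$ with $\alpha\ne 0$, forcing $t_0=0$. Conversely, if $t_0=0$ then from $TT'=I_{p+1}$ the block identity $t_1t_1'+T_1T_1'=I_{p+1-q}$ combined with $\|t_1\|^2=1-\|t_0\|^2=1$ yields $T_1T_1'=I_{p+1-q}-t_1t_1'$, a rank-$(p-q)$ matrix; hence $\rank(T_1)=p-q<p+1-q$. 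Together these give the desired biconditional.

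I do not anticipate any real obstacle: the argument is essentially linear-algebraic bookkeeping, the only mildly delicate point being the observation that the one-dimensional kernel of $A'$ is spanned by the distinguished column $b$ of the orthogonal matrix $T$, which is exactly what lets one read the block structure of $t_0,t_1$ off of the rank of $T_1$.
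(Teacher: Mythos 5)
Your proof is correct. The converse direction is exactly the paper's argument: from $TT'=I_{p+1}$ and $\|t_0\|^2+\|t_1\|^2=1$ one gets $T_1T_1'=I_{p+1-q}-t_1t_1'$, whose eigenvalues are $1$ (with multiplicity $p-q$) and $\|t_0\|^2$; the paper reads \emph{both} directions off this single eigenvalue identity, since $\rank{T_1}=\rank{T_1T_1'}=p+1-q$ precisely when the smallest eigenvalue $\|t_0\|^2$ is positive. Your forward direction instead identifies the one-dimensional left kernel of $[T_0',T_1']'$ with the span of the first column of $T$ and is equally valid, but it is a slightly longer route to the same elementary fact.
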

\begin{proof}
By orthogonality,
\begin{align*}
I_{p+1} = TT' = 
\begin{bmatrix}
t_0t_0' + T_0T_0' &t_0t_1'+T_0T_1'\\
t_1t_0'+T_1T_0' &t_1t_1'+T_1T_1'
\end{bmatrix},
\end{align*}
and $\|t_0\|^2+\|t_1\|^2 = 1$. Hence, 
$$
T_1T_1' = I_{p+1-q} - t_1t_1'
$$
has eigenvalues $1$, with multiplicity $p-q$, and a single eigenvalue $1-\|t_1\|^2 = \|t_0\|^2$, which is strictly positive if and only if, $\rank{T_1} = p+1-q$.
\end{proof}


\begin{lemma}
\label{lemma:invWish}
If the $n\times p$ random matrix $X$ has i.i.d. rows following the $\mathcal N(0,\Sigma)$-distribution with positive definite $\Sigma$, $v\in\R^{p}$, $v\ne 0$ and $T\in\R^{q\times p}$ has orthonormal rows, then $v'(T\hat{\Sigma}_n^{-1}T')^{-1}v \thicksim v'(T\Sigma^{-1}T')^{-1}v\,\chi_{n-1-(p-q)}^2/n$, where $\hat{\Sigma}_n = X'(I_n-P_\iota)X/n$ is the sample covariance matrix.
\end{lemma}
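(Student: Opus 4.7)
The plan is to reduce the claim to standard Wishart theory. First I would note that $n\hat\Sigma_n = X'(I_n-P_\iota)X$ is the sum of $n-1$ i.i.d.\ $\mathcal N(0,\Sigma)$-outer products (apply an orthogonal transformation of the rows of $X$ whose first row is $\iota/\sqrt n$), hence $W := n\hat\Sigma_n \sim W_p(n-1,\Sigma)$. Since $(T\hat\Sigma_n^{-1}T')^{-1} = n^{-1}(TW^{-1}T')^{-1}$, the lemma reduces to showing
$$(TW^{-1}T')^{-1} \;\sim\; W_q(n-1-(p-q),(T\Sigma^{-1}T')^{-1}),$$
because the classical identity $v'Uv/(v'\Psi v)\sim\chi^2_m$ for $U\sim W_q(m,\Psi)$ and any fixed $v\ne 0$ (immediate from the representation $U=\sum_{i=1}^m\xi_i\xi_i'$ with $\xi_i$ i.i.d.\ $\mathcal N(0,\Psi)$) then yields the stated chi-squared distribution after dividing by $n$.

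To establish the Wishart reduction I would first standardize by setting $\tilde W = \Sigma^{-1/2}W\Sigma^{-1/2}\sim W_p(n-1,I_p)$, so that $TW^{-1}T' = A\tilde W^{-1}A'$ with $A := T\Sigma^{-1/2}$. Factor $A = (AA')^{1/2}B = (T\Sigma^{-1}T')^{1/2}B$, where $B\in\R^{q\times p}$ has orthonormal rows ($BB' = I_q$, which is possible since $\Sigma$ is positive definite and $T$ has full row rank). Extend $B$ to a $p\times p$ orthogonal matrix $\bar B$ whose first $q$ rows are $B$, and set $W^* := \bar B\tilde W\bar B'$. By orthogonal invariance of the Wishart distribution with identity scale, $W^*\sim W_p(n-1,I_p)$, and $B\tilde W^{-1}B'$ coincides with the top-left $q\times q$ block of $(W^*)^{-1}$.

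By the block-inversion formula, that top-left block equals $(W^*_{11\cdot 2})^{-1}$, where $W^*_{11\cdot 2}$ is the Schur complement of the lower-right $(p-q)\times(p-q)$ block of $W^*$; the classical Wishart Schur-complement theorem gives $W^*_{11\cdot 2}\sim W_q(n-1-(p-q),I_q)$. Inverting and conjugating back by $(T\Sigma^{-1}T')^{1/2}$ then yields
$$(TW^{-1}T')^{-1} \;=\; (T\Sigma^{-1}T')^{-1/2}\,W^*_{11\cdot 2}\,(T\Sigma^{-1}T')^{-1/2} \;\sim\; W_q\bigl(n-1-(p-q),\,(T\Sigma^{-1}T')^{-1}\bigr),$$
using the rule $CUC'\sim W_q(m,C\Psi C')$ for $U\sim W_q(m,\Psi)$. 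Combined with the chi-squared identity above, this gives exactly the claim of the lemma. The argument is entirely standard; the only mild obstacle is bookkeeping in the factorization $A=(AA')^{1/2}B$ and its orthogonal extension, which is the device that lets one reduce the general $(T,\Sigma)$-case to the canonical block partition of a standard Wishart matrix.
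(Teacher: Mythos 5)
Your proof is correct and follows essentially the same route as the paper's: both identify $(T\hat{\Sigma}_n^{-1}T')^{-1}$, via block matrix inversion, as the Schur complement of a block in a Wishart matrix, invoke the Wishart Schur-complement theorem to obtain a $W_q(n-1-(p-q),(T\Sigma^{-1}T')^{-1})$ law, and finish with the standard $\chi^2$ quadratic-form identity. The only (cosmetic) difference is that you pre-standardize to identity scale through the factorization $T\Sigma^{-1/2}=(T\Sigma^{-1}T')^{1/2}B$, whereas the paper extends $T$ directly to an orthogonal matrix $R=[S',T']'$ and applies the general-scale version of the Schur-complement theorem to $R\hat{\Sigma}_nR'$.
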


\begin{proof}
It is well known that $n\hat{\Sigma}_n\thicksim \mathcal W_p(\Sigma, n-1)$ has a Wishart distribution with scale matrix $\Sigma$ and $n-1$ degrees of freedom \citep[e.g.,][Theorem 3.4.4.(c)]{Mardia95}. If $q=p$, then $T$ is orthogonal and $nv'(T\hat{\Sigma}_n^{-1}T')^{-1}v = v'Tn\hat{\Sigma}_nT'v \thicksim v'T\Sigma T' v\, \chi_{n-1}^2 = v'(T\Sigma^{-1}T')^{-1}v\, \chi_{n-1}^2$ \citep[cf.][Theorem 3.4.2]{Mardia95}. So assume that $q<p$. Let $S\in\R^{(p-q)\times p}$ be such that $R = [S',T']'$ is an orthogonal matrix. Then, by block matrix inversion of 
$$
R\hat{\Sigma}_nR' = \begin{pmatrix} S\hat{\Sigma}_nS' & S\hat{\Sigma}_nT'\\ T\hat{\Sigma}_nS' & T\hat{\Sigma}_nT' \end{pmatrix} \thicksim \frac{1}{n}\mathcal W_p(R\Sigma R', n-1),
$$
we see that the matrix $(T\hat{\Sigma}_n^{-1}T')^{-1} = ([0,I_q](R\hat{\Sigma}_nR')^{-1}[0,I_q]')^{-1} = T\hat{\Sigma}_nT' - T\hat{\Sigma}_nS'(S\hat{\Sigma}_nS')^{-1}S\hat{\Sigma}_nT'$ is the Schur complement of $S\hat{\Sigma}_nS'$ in $R\hat{\Sigma}_nR'$, which follows the $\mathcal W_q(\Omega_{22\cdot1}, n-1-(p-q))$-distribution divided by $n$, where $\Omega_{22\cdot1} = T\Sigma T' - T\Sigma S'(S\Sigma S')^{-1} S\Sigma T'$ \citep[cf.][Theorem 3.4.6.(a)]{Mardia95}. Therefore, $nv'(T\hat{\Sigma}_n^{-1}T')^{-1}v \thicksim v'\Omega_{22\cdot 1}v\,\chi_{n-1-(p-q)}^2$, and $\Omega_{22\cdot1} = (T\Sigma^{-1}T')^{-1}$.
\end{proof}


\begin{lemma}
\label{lemma:SchurComp}
Let $\mu\in\R^p$ and $\Sigma$ be a symmetric, positive definite $p\times p$ matrix. Let $T = [R_0', R_1']'$ be a $(p+1)\times (p+1)$ orthogonal matrix such that $R_0\in \R^{q\times (p+1)}$ and set 
\begin{align*}
&S = \begin{pmatrix}
1 &\mu'\\
\mu &\Sigma + \mu\mu'
\end{pmatrix},
\quad
\Omega = TST' = \begin{pmatrix} R_0SR_0' & R_0SR_1'\\ R_1SR_0' &R_1SR_1' \end{pmatrix} = 
\begin{pmatrix}
\Omega_{00} &\Omega_{01}\\ \Omega_{10} &\Omega_{11}
\end{pmatrix},\\
&\Sigma_T = T\begin{pmatrix}0 &0\\ 0 &\Sigma\end{pmatrix}T' = 
\begin{pmatrix}R_0\\R_1\end{pmatrix}[0,I_p]'\Sigma[0,I_p]\begin{pmatrix}R_0' &R_1'\end{pmatrix}
=
\begin{pmatrix} \Sigma_{00} &\Sigma_{01}\\ \Sigma_{10} &\Sigma_{11}\end{pmatrix}.
\end{align*}
If $\Sigma_{11} := R_1[0,I_p]'\Sigma[0,I_p]R_1'$ is regular, then the Schur complement of $\Omega_{11}$ in $\Omega$ is related to the Schur complement of $\Sigma_{11}$ in $\Sigma_T$ by $\Omega_{00} - \Omega_{01}\Omega_{11}^{-1}\Omega_{10} = \Sigma_{00} - \Sigma_{01}\Sigma_{11}^{-1}\Sigma_{10} + \tilde{\mu}\tilde{\mu}'/(1+\nu)$, where $\tilde{\mu} = (R_0-\Sigma_{01}\Sigma_{11}^{-1}R_1)(1,\mu')'$ and $\nu = (1,\mu')R_1'\Sigma_{11}^{-1}R_1(1,\mu')'$.
\end{lemma}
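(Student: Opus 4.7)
The plan rests on a single observation: the matrix $S$ is a rank-one perturbation of the block-diagonal matrix $\diag(0,\Sigma)$, namely
$$
S \;=\; \begin{pmatrix}0 &0\\ 0 &\Sigma\end{pmatrix} \;+\; \begin{pmatrix}1\\\mu\end{pmatrix}\begin{pmatrix}1 &\mu'\end{pmatrix}.
$$
Conjugating by the orthogonal matrix $T$ gives $\Omega = \Sigma_T + uu'$, where $u := T(1,\mu')'$. Writing $u = (u_0',u_1')'$ with $u_0 = R_0(1,\mu')'$ and $u_1 = R_1(1,\mu')'$ and reading off the blocks yields
$$
\Omega_{ij} \;=\; \Sigma_{ij} \;+\; u_i u_j', \qquad i,j\in\{0,1\}.
$$
The first step therefore reduces the problem to computing $\Omega_{00}-\Omega_{01}\Omega_{11}^{-1}\Omega_{10}$ in terms of $\Sigma_{ij}$ and the rank-one update vectors $u_0,u_1$.

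The second step is to invert $\Omega_{11} = \Sigma_{11} + u_1u_1'$ by the Sherman--Morrison formula, which is legitimate because $\Sigma_{11}$ is assumed regular and gives
$$
\Omega_{11}^{-1} \;=\; \Sigma_{11}^{-1} \;-\; \frac{\Sigma_{11}^{-1}u_1u_1'\Sigma_{11}^{-1}}{1+u_1'\Sigma_{11}^{-1}u_1} \;=\; \Sigma_{11}^{-1} \;-\; \frac{\Sigma_{11}^{-1}u_1u_1'\Sigma_{11}^{-1}}{1+\nu},
$$
where $\nu = u_1'\Sigma_{11}^{-1}u_1 = (1,\mu')R_1'\Sigma_{11}^{-1}R_1(1,\mu')'$, matching the definition in the statement.

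The third step is the algebraic expansion. Abbreviate $b := \Sigma_{01}\Sigma_{11}^{-1}u_1$, so that $u_1'\Sigma_{11}^{-1}\Sigma_{10} = b'$. Multiplying out $\Omega_{01}\Omega_{11}^{-1}\Omega_{10} = (\Sigma_{01}+u_0u_1')\Omega_{11}^{-1}(\Sigma_{10}+u_1u_0')$ using the Sherman--Morrison form above produces exactly six terms. Collecting coefficients of $u_0u_0'$ gives $(1-\nu) + \nu^2/(1+\nu) = 1/(1+\nu)$; the cross terms $u_0b'$ and $bu_0'$ each pick up coefficient $-1+\nu/(1+\nu) = -1/(1+\nu)$; and the $bb'$ term comes with coefficient $1/(1+\nu)$. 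Therefore
$$
\Omega_{00}-\Omega_{01}\Omega_{11}^{-1}\Omega_{10} \;=\; (\Sigma_{00}-\Sigma_{01}\Sigma_{11}^{-1}\Sigma_{10}) \;+\; \frac{(u_0-b)(u_0-b)'}{1+\nu}.
$$
The proof concludes by noting
$$
u_0 - b \;=\; R_0(1,\mu')' - \Sigma_{01}\Sigma_{11}^{-1}R_1(1,\mu')' \;=\; (R_0-\Sigma_{01}\Sigma_{11}^{-1}R_1)(1,\mu')' \;=\; \tilde{\mu},
$$
which gives the claimed identity. There is no serious obstacle here; the only care needed is to keep the six expanded terms straight and to verify that the coefficients collapse to $1/(1+\nu)$ uniformly, which is pure algebra.
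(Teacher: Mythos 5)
Your proposal is correct and follows essentially the same route as the paper: both decompose $S$ as $\diag(0,\Sigma)$ plus the rank-one term $(1,\mu')'(1,\mu')$, apply Sherman--Morrison to $\Omega_{11}=\Sigma_{11}+u_1u_1'$, and collect the resulting terms into $\tilde{\mu}\tilde{\mu}'/(1+\nu)$ (your $u_i$ are the paper's $\tilde{\mu}_i$). The only quibble is cosmetic: the expansion of $(\Sigma_{01}+u_0u_1')\Omega_{11}^{-1}(\Sigma_{10}+u_1u_0')$ produces eight terms rather than six, but your coefficient bookkeeping and the final factorization as $(u_0-b)(u_0-b)'/(1+\nu)$ are right.
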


\begin{proof}
First note that $\Omega_{ij} = \Sigma_{ij} + R_i(1,\mu')'(1,\mu')R_j$, for $i,j\in\{0,1\}$. Abbreviate $\tilde{\mu}_i = R_i(1,\mu')'$, for $i=0,1$ and $\nu=\tilde{\mu}_1'\Sigma_{11}^{-1}\tilde{\mu}_1$ and use the Sherman-Morrison formula to write
\begin{align*}
\Omega_{00}& - \Omega_{01}\Omega_{11}^{-1}\Omega_{10} 
=
\Sigma_{00} + \tilde{\mu}_0\tilde{\mu}_0' 
- 	(\Sigma_{01} + \tilde{\mu}_0\tilde{\mu}_1)
	(\Sigma_{11} + \tilde{\mu}_1\tilde{\mu}_1')^{-1}
	(\Sigma_{10} + \tilde{\mu}_1\tilde{\mu}_0')\\
&=
\Sigma_{00} + \tilde{\mu}_0\tilde{\mu}_0' 
- 	(\Sigma_{01} + \tilde{\mu}_0\tilde{\mu}_1')
	\left(\Sigma_{11}^{-1} - \frac{\Sigma_{11}^{-1}\tilde{\mu}_1\tilde{\mu}_1'\Sigma_{11}^{-1}}{1+ \nu}\right)
(\Sigma_{10} + \tilde{\mu}_1\tilde{\mu}_0')\\
&=
\Sigma_{00} - \Sigma_{01}\Sigma_{11}^{-1}\Sigma_{10} \\
&\quad
+ \tilde{\mu}_0\tilde{\mu}_0' 
- \tilde{\mu}_0\tilde{\mu}_1'\Sigma_{11}^{-1}\Sigma_{10} - \Sigma_{01}\Sigma_{11}^{-1} \tilde{\mu}_1\tilde{\mu}_0' 
- \tilde{\mu}_0\tilde{\mu}_1' \Sigma_{11}^{-1}\tilde{\mu}_1\tilde{\mu}_0' \\
&\quad
+ \Sigma_{01}\frac{\Sigma_{11}^{-1}\tilde{\mu}_1\tilde{\mu}_1'\Sigma_{11}^{-1}}{1+ \nu}\Sigma_{10}
+ \tilde{\mu}_0\tilde{\mu}_1'\frac{\Sigma_{11}^{-1}\tilde{\mu}_1\tilde{\mu}_1'\Sigma_{11}^{-1}}{1+ \nu}\Sigma_{10}
+ \Sigma_{01}\frac{\Sigma_{11}^{-1}\tilde{\mu}_1\tilde{\mu}_1'\Sigma_{11}^{-1}}{1+ \nu}\tilde{\mu}_1\tilde{\mu}_0' \\
&\quad+
 \tilde{\mu}_0\tilde{\mu}_1'\frac{\Sigma_{11}^{-1}\tilde{\mu}_1\tilde{\mu}_1'\Sigma_{11}^{-1}}{1+ \nu}\tilde{\mu}_1\tilde{\mu}_0' \\
 &= \Sigma_{00} - \Sigma_{01}\Sigma_{11}^{-1}\Sigma_{10} 
 \quad+\quad
 \tilde{\mu}_0 (1- \nu)\tilde{\mu}_0' + \tilde{\mu}_0\frac{\nu^2}{1+\nu}\tilde{\mu}_0'\\
 &\quad+
 \tilde{\mu}_0
 \left(
 	\frac{\nu}{1+ \nu} - 1
\right)   \tilde{\mu}_1'\Sigma_{11}^{-1}\Sigma_{10}
+
 \Sigma_{01}\Sigma_{11}^{-1}\tilde{\mu}_1
 \left(
 	\frac{\nu}{1+ \nu} - 1
 \right) \tilde{\mu}_0'\\
 &\quad+
  \Sigma_{01}\frac{\Sigma_{11}^{-1}\tilde{\mu}_1\tilde{\mu}_1'\Sigma_{11}^{-1}}{1+ \nu}\Sigma_{10}\\
&=
\Sigma_{00} - \Sigma_{01}\Sigma_{11}^{-1}\Sigma_{10} \\
&\quad+
\left(
	\tilde{\mu}_0\tilde{\mu}_0'
	-\tilde{\mu}_0\tilde{\mu}_1'\Sigma_{11}^{-1}\Sigma_{10}
	-\Sigma_{01}\Sigma_{11}^{-1}\tilde{\mu}_1\tilde{\mu}_0'
	+\Sigma_{01}\Sigma_{11}^{-1}\tilde{\mu}_1\tilde{\mu}_1'\Sigma_{11}^{-1}\Sigma_{10}
\right)/(1+\nu)\\
&=
\Sigma_{00} - \Sigma_{01}\Sigma_{11}^{-1}\Sigma_{10} + \frac{\tilde{\mu}\tilde{\mu}'}{1+\nu},
\end{align*}
where $\tilde{\mu} = \tilde{\mu}_0 - \Sigma_{01}\Sigma_{11}^{-1}\tilde{\mu}_1$.
\end{proof}


\begin{lemma}
\label{lemma:NormalProjMat}
Let $k$, $n$ be positive integers such that $k<n-1$. If $X$ is a random $n\times k$ matrix whose rows are i.i.d. distributed according to $\mathcal N(\mu, \Sigma)$, where $\mu\in\R^k$ and $\Sigma$ is positive definite, then 
\begin{align*}
\frac{1}{n}\iota'(I_n-P_X)\iota \quad\thicksim\quad \frac{\xi}{\xi+\zeta},
\end{align*}
where $\xi$ and $\zeta$ are independent and distributed according to $\xi\thicksim \chi_{n-k}^2$ and $\zeta\thicksim\chi_{k}^2(\lambda_n)$, with non-centrality parameter $\lambda_n=n\mu'\Sigma^{-1}\mu$.
\end{lemma}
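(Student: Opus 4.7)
The plan is a reduction to a canonical form followed by a standard Wishart identity. First I would remove the covariance by noting that $P_X = P_{X\Sigma^{-1/2}}$, so we may replace $X$ by $X\Sigma^{-1/2}$, whose rows are i.i.d. $\mathcal N(\tilde\mu, I_k)$ with $\tilde\mu = \Sigma^{-1/2}\mu$. This preserves $\lambda_n = n\mu'\Sigma^{-1}\mu = n\tilde\mu'\tilde\mu$, so without loss of generality assume $\Sigma = I_k$.

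Next I would rotate $\iota$ into the first coordinate axis. Let $H$ be an $n\times n$ orthogonal matrix whose first column is $\iota/\sqrt n$, and set $Y = H'X$. Since $P_Y = H'P_X H$ and $H'\iota = \sqrt n\,e_1$, we get
\begin{equation*}
\tfrac{1}{n}\iota'(I_n-P_X)\iota \;=\; 1 - (P_Y)_{11}.
\end{equation*}
Writing $X = \iota\mu' + E$ with $E$ having i.i.d. $\mathcal N(0,1)$ entries, we obtain $Y = \sqrt n\,e_1\mu' + H'E$, and $H'E$ still has i.i.d. $\mathcal N(0,1)$ entries because its columns are $H'$ applied to independent $\mathcal N(0,I_n)$ vectors. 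Hence the first row of $Y$ is $y_1 \sim \mathcal N(\sqrt n\,\mu, I_k)$, the remaining $(n-1)\times k$ block $Y_-$ has i.i.d. $\mathcal N(0,1)$ entries, and $y_1 \perp Y_-$.

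Now I would apply Sherman--Morrison to $(P_Y)_{11} = y_1'(y_1y_1' + Y_-'Y_-)^{-1}y_1$. Since $k<n-1$, $Y_-'Y_-$ is invertible a.s., and the formula yields
\begin{equation*}
(P_Y)_{11} \;=\; \frac{W}{1+W}, \qquad W := y_1'(Y_-'Y_-)^{-1}y_1,
\end{equation*}
so $1-(P_Y)_{11} = 1/(1+W)$. The final step is the classical inverse-Wishart identity: if $A\sim\mathcal W_k(I_k,m)$ with $m\ge k$ and $b\ne 0$ is fixed, then $b'b/(b'A^{-1}b)\sim \chi^2_{m-k+1}$ (see, e.g., Mardia, Kent, Bibby, Theorem 3.4.7). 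Applied conditionally on $y_1$ with $A = Y_-'Y_-\sim\mathcal W_k(I_k,n-1)$, this gives $\|y_1\|^2/W \sim \chi^2_{n-k}$ whose conditional law is free of $y_1$, so $\xi := \|y_1\|^2/W$ is unconditionally $\chi^2_{n-k}$ and independent of $y_1$. Setting $\zeta := \|y_1\|^2 \sim \chi^2_k(n\mu'\mu)$, we obtain $W = \zeta/\xi$ with $\xi\perp\zeta$, and therefore
\begin{equation*}
\tfrac{1}{n}\iota'(I_n-P_X)\iota \;=\; \frac{1}{1+W} \;=\; \frac{\xi}{\xi+\zeta},
\end{equation*}
as claimed. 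There is no serious obstacle; the only care needed is in checking that the rotation $H'$ preserves the i.i.d. Gaussian structure of the error part and that the conditional-to-unconditional step in the Wishart identity is justified by the independence of $y_1$ and $Y_-$.
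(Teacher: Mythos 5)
Your proof is correct and follows essentially the same route as the paper: after reducing to $\Sigma=I_k$, your quantity $W=y_1'(Y_-'Y_-)^{-1}y_1$ is exactly the paper's $\hat{\bar{\mu}}_n'\hat{\bar{\Sigma}}_n^{-1}\hat{\bar{\mu}}_n$ (since $y_1=\sqrt{n}\,\hat{\bar{\mu}}_n$ and $Y_-'Y_-=n\hat{\bar{\Sigma}}_n$), and both arguments combine the Sherman--Morrison identity with Theorem~3.4.7 of Mardia, Kent and Bibby applied conditionally. The only cosmetic difference is that you derive the Wishart distribution and the independence of sample mean and covariance via the explicit rotation $H$, whereas the paper simply cites these standard facts.
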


\begin{remark*}\normalfont
Lemma~\ref{lemma:NormalProjMat} is a slight variation of Lemma~A.2 in \citet{Leeb09}.
\end{remark*}

\begin{proof}
Note that $P_X = P_{\bar{X}}$, where $\bar{X} = X\Sigma^{-1/2}$ has i.i.d. rows following the $\mathcal N(\Sigma^{-1/2}\mu,I_n)$ distribution. Writing $\hat{\bar{\mu}}_n = \bar{X}'\iota/n$ and $\hat{\bar{\Sigma}}_n = \bar{X}'(I_n-P_\iota)\bar{X}/n = \bar{X}'\bar{X}/n - \hat{\bar{\mu}}_n\hat{\bar{\mu}}_n'$, for the sample mean and sample covariance matrix of the transformed data, we have, at least on an event of probability one,
\begin{align*}
\frac{1}{n}\iota'(I_n-P_X)\iota \quad&=\quad 1-\hat{\bar{\mu}}_n'(\hat{\bar{\Sigma}}_n + \hat{\bar{\mu}}_n\hat{\bar{\mu}}_n')^{-1}\hat{\bar{\mu}}_n\\
&=\quad 1 - \left[
\hat{\bar{\mu}}_n'\hat{\bar{\Sigma}}_n^{-1}\hat{\bar{\mu}} - \frac{(\hat{\bar{\mu}}_n'\hat{\bar{\Sigma}}_n^{-1}\hat{\bar{\mu}}_n)^2}{1 + \hat{\bar{\mu}}_n'\hat{\bar{\Sigma}}_n^{-1}\hat{\bar{\mu}}_n}
\right]\\
&=\quad \frac{1}{1+\hat{\bar{\mu}}_n'\hat{\bar{\Sigma}}_n^{-1}\hat{\bar{\mu}}_n}.
\end{align*}
Since $n\hat{\bar{\Sigma}}_n$ has a standard Wishart distribution with $n-1$ degrees of freedom and is independent of $\hat{\bar{\mu}}_n$, we get from \citet[Theorem 3.4.7]{Mardia95} that, conditional on $\hat{\bar{\mu}}_n$, the quantity $\hat{\bar{\mu}}_n'\hat{\bar{\Sigma}}_n^{-1}\hat{\bar{\mu}}_n = n\|\hat{\bar{\mu}}_n\|^2 (\hat{\bar{\mu}}_n/\|\hat{\bar{\mu}}_n\|)'(n\hat{\bar{\Sigma}}_n)^{-1}(\hat{\bar{\mu}}_n/\|\hat{\bar{\mu}}_n\|)$ has the same distribution as $n\|\hat{\bar{\mu}}_n\|^2/\xi$, where $\xi\thicksim \chi_{n-k}^2$ is independent of $\hat{\bar{\mu}}_n$. The proof is finished upon noting that $\zeta := n\|\hat{\bar{\mu}}_n\|^2 = \|\bar{X}'\iota/\sqrt{n}\|^2 \thicksim \chi_{k}^2(n\mu'\Sigma^{-1}\mu)$ and that 
$1/(1+\zeta/\xi) = \xi/(\xi+\zeta)$.
\end{proof}

\begin{lemma}
\label{lemma:NCFdist}
Let $q_n\le p_n+1< n$ be positive integer sequences such that $\limsup_n p_n/n<1$, and let $\Delta_n$ be a non-negative real sequence of order $o(q_n/n)$. Moreover, let $s_n$ and $b_n$ be as in Theorem~\ref{thm:main}. Then a non-central $F$-distributed random variable $F_{q_n,n-p_n-1}(\lambda_n)$ with $q_n$ and $n-p_n-1$ degrees of freedom and non-centrality parameter $\lambda_n = \Delta_n (n-p_n-1+q_n)$ satisfies
\begin{align*}
s_n^{-1/2}&\left(F_{q_n,n-p_n-1}(\lambda_n) - 1\right) - \sqrt{n}\Delta_n b_n \\
&\hspace{1cm}\xrightarrow[n\to\infty]{w}\quad
\begin{cases}
\mathcal N(0,1), \hspace{0.5cm}&\text{if } q_n\to\infty,\\
(2q)^{-1/2}\chi_q^2 - \sqrt{q/2}, &\text{if } q_n = q,\quad\forall n\in\N.
\end{cases}
\end{align*}
\end{lemma}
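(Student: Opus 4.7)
The plan is to write the non-central $F$ variable as a ratio of two independent scaled chi-squared variables and reduce the claim to a joint CLT for those pieces. Let $m_n := n-p_n-1$, and represent $F_{q_n,m_n}(\lambda_n) = (Z_n/q_n)/(W_n/m_n)$, where $Z_n \sim \chi^2_{q_n}(\lambda_n)$ and $W_n\sim\chi^2_{m_n}$ are independent, with $\E Z_n = q_n+\lambda_n$, $\Var Z_n = 2q_n+4\lambda_n$ and $\E W_n = m_n$, $\Var W_n = 2m_n$. A direct calculation using $s_n^{-1/2}=\sqrt{q_nm_n/(2(m_n+q_n))}$, $b_n = \sqrt{m_n(m_n+q_n)/(2nq_n)}$ and $\lambda_n = \Delta_n(m_n+q_n)$ yields the key identity $s_n^{-1/2}\lambda_n/q_n = \sqrt{n}\Delta_n b_n$, which holds exactly (not merely asymptotically).

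Writing $Z_n/q_n-W_n/m_n = [(Z_n-q_n-\lambda_n)/q_n] - [(W_n-m_n)/m_n] + \lambda_n/q_n$ and dividing by $W_n/m_n$ gives
\begin{equation*}
s_n^{-1/2}(F_{q_n,m_n}(\lambda_n)-1) - \sqrt{n}\Delta_n b_n \;=\; \frac{m_n}{W_n}\,T_n \;+\; \sqrt{n}\Delta_n b_n\Big(\frac{m_n}{W_n}-1\Big),
\end{equation*}
where $T_n := s_n^{-1/2}\big[(Z_n-q_n-\lambda_n)/q_n - (W_n-m_n)/m_n\big]$. Since $m_n\to\infty$ (because $\limsup p_n/n<1$), the classical CLT for $\chi^2$ gives $W_n/m_n-1 = O_{\P}(m_n^{-1/2})$, in particular $m_n/W_n\xrightarrow[n\to\infty]{i.p.} 1$. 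A crude bound $\sqrt{n}\Delta_n b_n = o(\sqrt{q_n})$ (immediate from $\Delta_n=o(q_n/n)$, $m_n\le n$, and $m_n+q_n\le 2n$), together with the boundedness of $q_n/m_n$ (another consequence of $\limsup p_n/n<1$), shows that the remainder term $\sqrt{n}\Delta_n b_n(m_n/W_n-1) = o_{\P}(\sqrt{q_n/m_n}) = o_{\P}(1)$. By Slutsky, it then suffices to find the weak limit of $T_n$.

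To analyze $T_n$, observe that the two summands are independent, $\lambda_n=o(q_n)$, and a short computation of the variance contributions $\big(s_n^{-1/2}/q_n\big)^2(2q_n+4\lambda_n)\to m_n/(m_n+q_n)$ and $\big(s_n^{-1/2}/m_n\big)^2\cdot 2m_n \to q_n/(m_n+q_n)$ sums to $1$. If $q_n\to\infty$, the standard CLT applies to both chi-squared variables; for $Z_n$ this is conveniently seen from the representation $Z_n = \sum_{i=1}^{q_n-1}X_i^2 + (X_{q_n}+\sqrt{\lambda_n})^2$ with i.i.d. standard normals $X_i$, so that $(Z_n-q_n-\lambda_n)/\sqrt{2q_n}$ equals $\sum_{i=1}^{q_n}(X_i^2-1)/\sqrt{2q_n}$ plus $2\sqrt{\lambda_n/(2q_n)}X_{q_n}=o_{\P}(1)$. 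Independence and the variance balance above then give $T_n\xrightarrow[n\to\infty]{w}\mathcal N(0,1)$. If $q_n\equiv q$ is fixed, then $\lambda_n = \Delta_n(m_n+q) = o(1)$ forces $Z_n\xrightarrow[n\to\infty]{w}\chi^2_q$; moreover $s_n^{-1/2}/q\to 1/\sqrt{2q}$ while $s_n^{-1/2}\sqrt{2m_n}/m_n = \sqrt{q/(m_n+q)}\to 0$, so the $W_n$-contribution to $T_n$ vanishes in probability. Slutsky then yields $T_n\xrightarrow[n\to\infty]{w}(\chi^2_q-q)/\sqrt{2q}=(2q)^{-1/2}\chi^2_q-\sqrt{q/2}$, as required.

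There is no single step that I would flag as a serious obstacle; the argument is essentially a bookkeeping exercise combining the exact identity $s_n^{-1/2}\lambda_n/q_n=\sqrt{n}\Delta_n b_n$, the negligibility of the mixed remainder via $m_n/W_n \xrightarrow[n\to\infty]{i.p.} 1$, and a careful accounting of the two variance contributions in $T_n$ before splitting into the two regimes for $q_n$. The only mildly subtle point is verifying the CLT for $Z_n-q_n-\lambda_n$ normalized by $\sqrt{2q_n}$ rather than by its own standard deviation, which is handled by the chi-squared-plus-shift representation above.
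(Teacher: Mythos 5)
Your proposal is correct and follows essentially the same route as the paper's proof: both represent the non-central $F$ as a ratio of independent (non-central) $\chi^2$ variables, exploit the exact identity $s_n^{-1/2}\lambda_n/q_n=\sqrt{n}\Delta_n b_n$, dispose of the cross term and the denominator fluctuation using $\lambda_n=o(q_n)$ and $m_n/W_n\to1$ in probability, and conclude by combining the two independent CLT contributions whose squared coefficients sum to one (the paper makes this last step explicit via a unit vector $v_n$ and compactness of the unit circle). The only cosmetic differences are that you factor out $m_n/W_n$ before defining $T_n$ and concentrate the non-centrality in a single coordinate, neither of which changes the substance.
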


\begin{remark*}\normalfont
This result is elementary and follows from basic properties of the non-central $\chi^2$ distribution \citep[cf.][Chapter 29.5]{Johnson94II}. For the convenience of the reader we include a proof nonetheless. The proof also nicely resembles the main steps of the much more involved argument needed to treat the non-Gaussian cases of Theorem~\ref{thm:main} and Theorem~\ref{thm:qfixed}.
\end{remark*}

\begin{proof}
Let $Y_1,\dots, Y_{q_n}, X_1,\dots, X_{n-p_n-1}$ be i.i.d. standard normal and let $\mu = (\mu_1,\dots, \mu_{q_n})'\in\R^{q_n}$ such that $\mu'\mu = \lambda_n$. Then $F_{q_n,n-p_n-1}(\lambda_n)$ can be represented as 
$$
F_{q_n,n-p_n-1}(\lambda_n) \quad\thicksim\quad \frac{\sum_{i=1}^{q_n}(Y_i+\mu_i)^2/q_n}{\sum_{j=1}^{n-p_n-1}X_j^2/(n-p_n-1)}.
$$
Therefore, 
\begin{align*}
&s_n^{-1/2} \left(F_{q_n,n-p_n-1}(\lambda_n) - 1\right) - \sqrt{n}\Delta_n b_n \\
&\;\thicksim\;
s_n^{-1/2} \frac{\frac{1}{q_n} \sum_{i=1}^{q_n} Y_i^2 - \frac{1}{n-p_n-1} \sum_{j=1}^{n-p_n-1}X_j^2 + \frac{2}{q_n}\sum_{i=1}^{q_n}Y_i\mu_i + \frac{\mu'\mu}{q_n} }
			{\frac{1}{n-p_n-1} \sum_{j=1}^{n-p_n-1}X_j^2}
	- \sqrt{n}\Delta_n b_n \\
&\;=	s_n^{-1/2}\left(\frac{1}{q_n} \sum_{i=1}^{q_n} Y_i^2 - \frac{1}{n-p_n-1} \sum_{j=1}^{n-p_n-1}X_j^2\right)\left(1+O_\P((n-p_n)^{-1/2})\right) \\
&\;\quad\quad +\quad \left(1+O_\P((n-p_n)^{-1/2})\right)\frac{2}{q_n\sqrt{s_n}}\sum_{i=1}^{q_n}Y_i\mu_i \\
&\;\quad\quad+ \quad
\frac{\lambda_n}{q_n\sqrt{s_n}}\left(1+O_\P((n-p_n)^{-1/2})\right) - \sqrt{n}\Delta_n b_n.
\end{align*}
For the mixed term $\frac{2}{q_n\sqrt{s_n}}\sum_{i=1}^{q_n}Y_i\mu_i$ we note that it has mean zero and variance equal to $4(q_n^2s_n)^{-1}\lambda_n = 4(q_ns_n)^{-1}\Delta_n (n/q_n) (n-p_n-1+q_n)/n = o(1)$, since $\Delta_n = o(q_n/n)$, by assumption, and $q_ns_n = 2(1+q_n/(n-p_n-1))\ge 2$. So the mixed term converges to zero in probability. For the non-centrality term $\frac{\lambda_n}{q_n\sqrt{s_n}}(1+O_\P((n-p_n)^{-1/2})) - \sqrt{n}\Delta_n b_n$ we first note that 
$$
\sqrt{n}\Delta_n b_n \;=\; \frac{n\Delta_n}{q_n\sqrt{s_n}} \frac{n-p_n-1+q_n}{n} \;=\; \frac{\lambda_n}{q_n\sqrt{s_n}},
$$ 
so that
\begin{align*}
\frac{\lambda_n}{q_n\sqrt{s_n}}&\left(1+O_\P((n-p_n)^{-1/2})\right) - \sqrt{n}\Delta_n b_n
\;=\;
\frac{\lambda_n}{q_n\sqrt{s_n}}  O_\P((n-p_n)^{-1/2})\\
&=\;
\frac{n}{q_n}\Delta_n  \frac{n-p_n-1+q_n}{n} (q_ns_n)^{-1/2} O_\P(\sqrt{q_n/(n-p_n)}) \;=\; o_\P(1),
\end{align*}
because $q_n/(n-p_n) \le n/(n-p_n) = (1-p_n/n)^{-1}$ is bounded in view of our assumption that $\limsup_n p_n/n<1$. The same assumption also guarantees that $n-p_n\to\infty$ as $n\to\infty$, so it remains to establish the appropriate convergence of
\begin{align}
s_n^{-1/2}&\left(\frac{1}{q_n} \sum_{i=1}^{q_n} Y_i^2 - \frac{1}{n-p_n-1} \sum_{j=1}^{n-p_n-1}X_j^2\right)\notag\\
&=\;
s_n^{-1/2}\left(\frac{1}{q_n} \sum_{i=1}^{q_n} (Y_i^2-1) - \frac{1}{n-p_n-1} \sum_{j=1}^{n-p_n-1}(X_j^2-1)\right)\notag\\
&=\;
\sqrt{\frac{n-p_n-1}{n-p_n-1+q_n}} \frac{1}{\sqrt{2q_n}}\sum_{i=1}^{q_n}(Y_i^2-1)\label{eq:chisqifqfixed}\\
&\quad-\;
\sqrt{\frac{q_n}{n-p_n-1+q_n}} \frac{1}{\sqrt{2(n-p_n-1)}}\sum_{j=1}^{n-p_n-1}(X_j^2-1).\label{eq:o1ifqfixed}
\end{align}
First, we consider the case where $q_n\to\infty$. Define the vectors
$$
Z_n := \begin{pmatrix}
\frac{1}{\sqrt{2q_n}}\sum_{i=1}^{q_n}(Y_i^2-1)\\
\frac{1}{\sqrt{2(n-p_n-1)}}\sum_{j=1}^{n-p_n-1}(X_j^2-1)
\end{pmatrix}\hspace{0.5cm} \text{and} \hspace{0.5cm}
v_n := \begin{pmatrix}
\sqrt{\frac{n-p_n-1}{n-p_n-1+q_n}}\\
-\sqrt{\frac{q_n}{n-p_n-1+q_n}}
\end{pmatrix}.
$$ 
By independence and the CLT, $Z_n$ converges weakly to a bivariate standard normal distribution as $n\to\infty$ and $\|v_n\|_2=1$. Thus, $v_n'Z_n\to\mathcal N(0,1)$, weakly, as $n\to\infty$, by compactness of the unit circle.
If $q_n=q\in\N$ does not depend on $n$, then the expression in \eqref{eq:o1ifqfixed} converges to zero in probability and the expression in \eqref{eq:chisqifqfixed} is distributed as $(1+o(1))[(2q)^{-1/2}\chi_q^2 - \sqrt{q/2}]$.
\end{proof}

\end{appendix}


{\small
\bibliographystyle{imsart-nameyear}
\bibliography{Ftest}
}

\end{document}